\def\hper{h^{\text{\rm{per}}}}
\def\Law{\mathsf{Law}}
\begin{document}


\title{Noise-induced phase slips, log-periodic oscillations,\\ 
and the Gumbel distribution}
\author{Nils Berglund} 
\date{}   

\maketitle

\begin{abstract}
When two synchronised phase oscillators are perturbed by weak noise, they
display occasional losses of synchrony, called phase slips. The slips can be
characterised by their location in phase space and their duration. We show that
when properly normalised, their location converges, in the vanishing noise
limit, to the sum of an asymptotically geometric random variable and a Gumbel
random variable. The duration also converges to a Gumbel variable with
different parameters. We relate these results to recent works on the phenomenon
of log-periodic oscillations and on links between transition path theory and
extreme-value theory. 
\end{abstract}

\leftline{\small{\it Date.\/} 
March 28, 2014. Revised. October 2, 2014. 
}
\leftline{\small 2000 {\it Mathematical Subject Classification.\/} 
60H10, 		
34F05  		
(primary), 
60G70,   	
34D06   		
(secondary).
}
\noindent{\small{\it Keywords and phrases.\/}
Synchronization,
phase slip,
stochastic exit problem,
large deviations,
random Poincar\'e map,
log-periodic oscillations,
cycling,
transition-path theory, 
extreme-value theory,
Gumbel distribution. 
}  


\section{Introduction}
\label{sec_intro}

The aim of this work is to explore connections between the concepts given in the
title:
\begin{enum}
\item 	\emph{Noise-induced phase slips} are occasional losses of synchrony of
two coupled phase oscillators, due to stochastic perturbations~\cite{PRK}. The
problem of finding the distribution of their location and length can be
formulated as a stochastic exit problem, which involves the exit through a
so-called \emph{characteristic boundary}~\cite{Day7,Day3}. 

\item 	\emph{Log-periodic oscillations} designate the periodic dependence of a
quantity of interest, such as a power law exponent, on the logarithm of a system
parameter. They often occur in systems presenting a discrete scale
invariance~\cite{Sornette_98}. In the context of the stochastic exit problem,
they are connected to the phenomenon of \emph{cycling} of the exit distribution
through an unstable periodic orbit~\cite{Day6,BG7,BG_periodic2}. 

\item 	The \emph{Gumbel distribution} is one of the max-stable distributions
known from extreme-value theory~\cite{Gnedenko_1943}. This distribution has
been known to occur in the exit distribution through an unstable periodic
orbit~\cite{MS4,BG7,BG_periodic2}. More recently, the Gumbel distribution has
also been found to govern the length of \emph{reactive paths} in
one-dimensional exit
problems~\cite{CerouGuyaderLelievreMalrieu12,Bakhtin_2013a,Bakhtin_2014a}. 
\end{enum}

In this work, we review a number of prior results on exit distributions, and
build on them to derive properties of the phase slip distributions. We start in
Section~\ref{sec_synchro} by recalling the classical situation of two coupled
phase oscillators, and the phenomenology of noise-induced phase slips. In
Section~\ref{sec_exit}, we present the mathematical set-up for the systems we
will consider, and introduce different tools that are used for the study of the
stochastic exit problem.

Section~\ref{sec_logper} contains our main results on the distribution of the
phase slip location. These results are mainly based on those
of~\cite{BG_periodic2} on the exit distribution through an unstable planar
periodic orbit, slightly reformulated in the context of limit distributions. We
also discuss links to the concept of log-periodic oscillations. 

In Section~\ref{sec_Gumbel}, we discuss a number of connections to extreme-value
theory. After summarizing properties of the Gumbel distribution relevant to our
problem, we give a short review of recent results by C\'erou, Guyader,
Leli\`evre and Malrieu~\cite{CerouGuyaderLelievreMalrieu12} and by
Bakhtin~\cite{Bakhtin_2013a,Bakhtin_2014a} on the appearance of the Gumbel
distribution in transition path theory for one-dimensional problems.

Section~\ref{sec_slips} presents our results on the duration of phase slips,
which build on the previous results from transition path theory. 
Section~\ref{sec_conclusion} contains a summary and some open questions, while 
the proofs of the main theorems are contained in the Appendix. 

\subsection*{Acknowledgments}

This work is based on a presentation given at the meeting \lq\lq Inhomogeneous
Random Systems\rq\rq\ at Institut Henri Poincar\'e, Paris, on January 28, 2014.
It is a pleasure to thank Giambattista Giacomin for inviting me, and Fran\c cois
Dunlop, Thierry Gobron and Ellen Saada for organising this interesting meeting.
I am grateful to Barbara Gentz for critical comments on the manuscript, and to
Arkady Pikovsky for suggesting to look at the phase slip duration. The
connection with elliptic functions was pointed out to me by G\'erard Letac.
Finally, thanks are due to an anonymous referee for comments leading to an
improved presentation. 


\section{Synchronization of phase oscillators}
\label{sec_synchro}

In this section, we briefly recall the setting of two coupled phase
oscillators showing synchronization, following mainly~\cite{PRK}. 


\subsection{Deterministic phase locking}
\label{ssec_synchrodet}

Consider two oscillators, whose dynamics is governed by ordinary differential
equations (ODEs) of the form 
\begin{align}
 \nonumber
 \dot x_1 &= f_1(x_1)\;, \\
 \dot x_2 &= f_2(x_2)\;, 
 \label{sdet01}
\end{align} 
where $x_1\in\R\!^{n_1}, x_2\in\R\!^{n_2}$ with $n_1,n_2\geqs 2$. 
A classical example of a system displaying oscillations is the Van der Pol
oscillator~\cite{vanderPol20,vanderPol26,vanderPol27}
\begin{equation}
 \label{sdet02}
 \ddot \theta_i - \gamma_i (1 - \theta_i^2) \dot\theta_i + \theta_i = 0\;,
\end{equation} 
which can be transformed into a first-order system by setting
$x_i=(\theta_i,\dot\theta_i)\in\R^2$. The precise form of the vector fields
$f_i$, however, does not matter. What is important is that each system admits an
asymptotically stable periodic orbit, called a \emph{limit cycle}. These limit
cycles can be parametrised by angular variables $\phi_1,\phi_2\in\R/\Z$ in such
a way that 
\begin{align}
 \nonumber
 \dot \phi_1 &= \omega_1\;,\\
 \dot \phi_2 &= \omega_2\;,
 \label{sdet03}
\end{align} 
where $\omega_1,\omega_2\in\R$ are constant angular
frequencies~\cite[Section~7.1]{PRK}. Note that the product of the two limit
cycles forms a two-dimensional invariant torus in phase space $\R\!^{n_1+n_2}$. 

Consider now a perturbation of System~\eqref{sdet01} in which the oscillators
interact weakly, given by 
\begin{align}
 \nonumber
 \dot x_1 &= f_1(x_1) + \eps g_1(x_1,x_2)\;, \\
 \dot x_2 &= f_2(x_2) + \eps g_2(x_1,x_2)\;.  
 \label{sdet04}
\end{align} 
The theory of normally hyperbolic invariant manifolds (see for
instance~\cite{HirschPughShub}) shows that the invariant torus persists for
sufficiently small nonzero $\eps$ (for stronger coupling, new phenomena such as
oscillation death can occur~\cite[Section~8.2.2]{PRK}). For small $\eps$, the
reduced equations~\eqref{sdet03} for the dynamics on the torus take the form 
\begin{align}
 \nonumber
 \dot \phi_1 &= \omega_1 + \eps Q_1(\phi_1,\phi_2)\;,\\
 \dot \phi_2 &= \omega_2 + \eps Q_2(\phi_1,\phi_2)\;,
 \label{sdet05}
\end{align} 
where $Q_{1,2}$ can be computed perturbatively in terms of $f_{1,2}$ and
$g_{1,2}$. 
Assume that the natural frequencies $\omega_1, \omega_2$ are different, but that
the \emph{detuning}  $\nu=\omega_2-\omega_1$ is small. Introducing new variables
$\psi=\phi_1-\phi_2$ and $\ph = (\phi_1+\phi_2)/2$ yields a system of the form 
\begin{align}
 \nonumber
 \dot \psi &= -\nu + \eps q(\psi,\ph)\;,\\
 \dot \ph &= \omega + \Order{\eps}\;, 
 \label{sdet06}
\end{align} 
where $\omega=(\omega_1+\omega_2)/2$ is the mean frequency. Note that the phase
difference $\psi$ evolves more slowly than the mean phase $\ph$, so that the
theory of averaging applies~\cite{Bogoliubov56,Verhulst}. For small $\nu$ and
$\eps$, solutions of~\eqref{sdet06} are close to those of the averaged system 
\begin{equation}
 \label{sdet07}
 \omega \frac{\6\psi}{\6\ph} =  -\nu + \eps \bar q(\psi)\;, 
 \qquad \bar q(\psi) = \int_0^1 q(\psi,\ph) \6\ph
\end{equation} 
(recall our convention that the period is equal to $1$). 
In particular, solutions of the equation $-\nu + \eps \bar q(\psi)=0$
correspond to stationary solutions of the averaged equation~\eqref{sdet07}, and
to periodic orbits of the original equation~\eqref{sdet06} (and
thus also of~\eqref{sdet05}). 

For example, in the case of \emph{Adler's equation}, $\bar q(\psi) =
\sin(2\pi\psi)$, there are two stationary points whenever
$\abs{\nu}<\abs{\eps}$. They give rise to one stable and one unstable periodic
orbit. The stable periodic orbit corresponds to a synchronized state, because
the phase difference $\psi$ remains bounded for all times. This is the
phenomenon known as \emph{phase locking}. 

\begin{remark}
Similar phase locking phenomena appear when the ratio $\omega_2/\omega_1$ is
close to any rational number $m/n\in\Q$. Then for small $\eps$ the quantity
$n\phi_1-m\phi_2$ may stay bounded for all times ($n\colon m$ \emph{frequency
locking}). The sets of parameter values $(\eps,\nu)$ for which frequency locking
with a specific ratio occurs are known as \emph{Arnold tongues}
\cite{Arnold1961}. 
\end{remark}


\subsection{Noise-induced phase slips}
\label{ssec_slips}

\begin{figure}
\begin{center}
\begin{tikzpicture}[>=stealth',main
node/.style={draw,thick,circle,blue,fill=blue!20,minimum
size=5pt,inner sep=0pt},scale=0.55,x=1.25cm,y=0.5cm, 
declare function={
pot(\x) = cos(4*\x r) - 1.5*\x;
}
]



\newcommand*{\xmin}{0}
\newcommand*{\xmax}{8}
\newcommand*{\ymin}{-13.5}

\path[fill=teal!20,thick,-,smooth,domain=\xmin:\xmax,samples=60,/pgf/fpu,
/pgf/fpu/output format=fixed] 
plot (\x, {pot(\x) }) -- (\xmax,{pot(\xmax)}) -- (\xmax,\ymin) -- (\xmin,\ymin)
--
(\xmin,{pot(\xmin)});

\draw[teal,thick,-,smooth,domain=\xmin:\xmax,samples=60,/pgf/fpu,
/pgf/fpu/output
format=fixed] plot (\x, {pot(\x) });


\path[fill=white] plot (\xmin,\ymin) -- (\xmax,\ymin) -- (\xmax,\ymin-1.5) --
(\xmin,\ymin-1.5);


\node[main node] at (2.45,{pot(2.45)+0.3}) {}; 


\draw[blue,semithick,->] (2.7,-3.7) .. controls (3,-3) and (3.2,-3) ..
(3.5,-3.7);

\node[] at (0.3,2.7) {{\bf (a)}};

\end{tikzpicture}
\hspace{5mm}
\begin{tikzpicture}[>=stealth',main
node/.style={draw,semithick,circle,fill=white,minimum
size=2pt,inner sep=0pt},scale=0.55,x=2cm,y=0.8cm, 
declare function={
stab(\x) = 1.5-0.6*sin(\x r+4);
unstab(\x) = 5+0.8*cos(\x r);
trans(\x) = stab(\x) + 3.14*(1+tanh(2*(\x-3.14)));
}
]



\path[fill=green!30,smooth,domain=0:6.28,samples=60,/pgf/fpu,
/pgf/fpu/output format=fixed] plot (\x, {stab(\x)+0.5}) -- plot(6.28-\x,
{stab(6.28-\x)-0.5});
\path[fill=green!30,smooth,domain=0:6.28,samples=60,/pgf/fpu,
/pgf/fpu/output format=fixed] plot (\x, {stab(\x)+6.28+0.5}) -- plot(6.28-\x,
{stab(6.28-\x)+6.28-0.5});


\draw[->,thick] (0,0) -- (0,9.3);
\draw[->,thick] (0,0) -- (6.8,0);


\draw[green!50!black,thick,-,smooth,domain=0:6.28,samples=60,/pgf/fpu,
/pgf/fpu/output format=fixed] plot (\x, {stab(\x)});
\draw[green!50!black,thick,-,smooth,domain=0:6.28,samples=60,/pgf/fpu,
/pgf/fpu/output format=fixed] plot (\x, {6.28+stab(\x)});


\draw[violet,thick,-,smooth,domain=0:6.28,samples=60,/pgf/fpu,
/pgf/fpu/output format=fixed] plot (\x, {unstab(\x)});


\pgfmathsetseed{16825527}
\draw[red,semithick,-,smooth,domain=0:6.28,samples=60,/pgf/fpu,
/pgf/fpu/output format=fixed] plot (\x, {trans(\x) + 0.5*rand});


\newcommand*{\xa}{2.2}
\draw[dashed] ({\xa},0) -- ({\xa},{stab(\xa)+0.5});
\node[main node] at ({\xa},0) {};
\node[main node] at ({\xa},{stab(\xa)+0.5}) {};
\node[] at ({\xa+0.1},-0.6) {$\ph_{\tau_-}$};

\renewcommand*{\xa}{3.05}
\draw[dashed] ({\xa},0) -- ({\xa},{unstab(\xa)});
\node[main node] at ({\xa},0) {};
\node[main node] at ({\xa},{unstab(\xa)}) {};
\node[] at ({\xa+0.1},-0.6) {$\ph_{\tau_0}$};

\renewcommand*{\xa}{3.7}
\draw[dashed] ({\xa},0) -- ({\xa},{stab(\xa)+6.28-0.5});
\node[main node] at ({\xa},0) {};
\node[main node] at ({\xa},{stab(\xa)+6.28-0.5}) {};
\node[] at ({\xa+0.1},-0.6) {$\ph_{\tau_+}$};


\node[] at (6.3,-0.6) {$\ph$};
\node[] at (-0.3,8.5) {$\psi$};

\node[] at (-0.2,10) {{\bf (b)}};

\end{tikzpicture}
\vspace{-6mm}
\end{center}
\caption[]{{\bf (a)} Washboard potential $V(\psi)$ of the averaged
system~\eqref{slips01}, in a case where $\nu<0$. Phase slips correspond to
transitions over a local potential maximum. {\bf (b)} For the unaveraged
system~\eqref{slips03}, phase slips involve crossing the unstable periodic orbit
delimiting the basin of attraction of the synchronized state. 
}
\label{fig_phase_slip}
\end{figure}
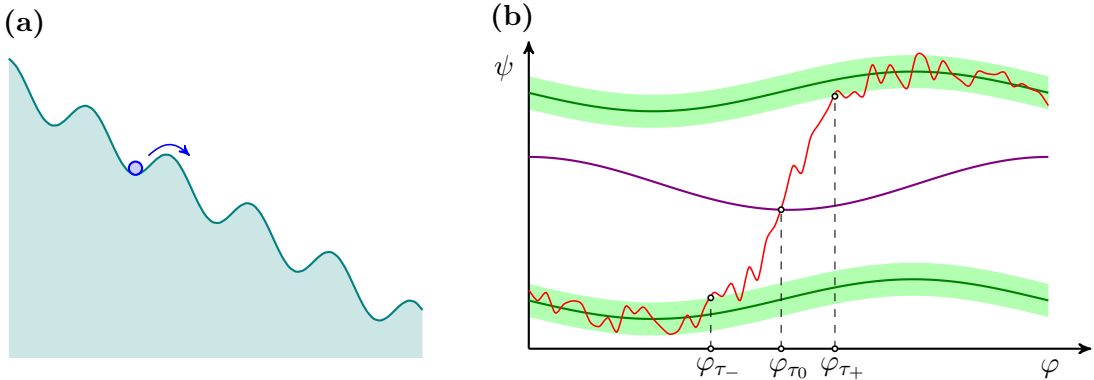

Consider now what happens when noise is added to the system. This is often 
done (see e.g.~\cite[Chapter~9]{PRK}) by looking at the effect of noise on
the averaged system~\eqref{sdet07}, which becomes 
\begin{equation}
 \label{slips01}
 \omega \frac{\6\psi}{\6\ph} =  -\nu + \eps \bar q(\psi) 
 + \text{noise}\;,
\end{equation} 
where we will specify in the next section what kind of noise we consider. The
first two terms on the right-hand side of~\eqref{slips01} can be written as 
\begin{equation}
 \label{slips02}
 -\frac{\partial}{\partial\psi} V(\psi)\;, 
 \qquad \text{where } 
 V(\psi) = \nu\psi - \eps\int_0^\psi \bar q(x)\6x\;.
\end{equation} 
In the synchronization region, the potential $V(\psi)$ has the shape of a tilted
periodic, or \emph{washboard} potential (\figref{fig_phase_slip}a). The local
minima of the potential represent the synchronized state, while the local maxima
represent an unstable state delimiting the basin of attraction of the
synchronized state. In the absence of noise, trajectories are attracted
exponentially fast to the synchronized state and stay there. When weak noise is
added, solutions still spend most of the time in a neighbourhood of a
synchronized state. However, occasional transitions through the unstable state
may occur, meaning that the system temporarily desynchronizes, before returning
to synchrony. This behaviour is called a \defwd{phase slip}. Transitions in both
directions may occur, that is, $\psi$ can increase or decrease by $1$ per phase
slip. When detuning and noise are small, however, transitions over the lower
local maximum of the washboard potential are more likely.  

In reality, however, we should add noise to the unaveraged 
system~\eqref{sdet06}, which becomes  
\begin{align}
 \nonumber
 \dot \psi &= -\nu + \eps q(\psi,\ph)+ \text{noise}\;,\\
 \dot \ph &= \omega + \Order{\eps}+ \text{noise}\;.  
 \label{slips03}
\end{align} 
Phase slips are now associated with transitions across the unstable orbit
(\figref{fig_phase_slip}b). Two important random quantities characterising the
phase slips are 
\begin{enum}
\item 	the value of the phase $\ph_{\tau_0}$ at the time $\tau_0$ when the
unstable orbit is crossed, and
\item 	the duration of the phase slip, which can be defined as the phase
difference between the time $\tau_-$ when a suitably defined neighbourhood of
the stable orbit is left, and the time $\tau_+$ when a neighbourhood of (a
translated copy of) the stable orbit is reached. 
\end{enum}
Unless the system~\eqref{slips03} is independent of
the phase $\ph_{\tau_0}$, there is no reason for the slip phases to have a
uniform
distribution. Our aim is to determine the weak-noise asymptotics of the phase
$\ph_{\tau_0}$ and of the phase slip duration $\ph_{\tau_+}-\ph_{\tau_-}$. 


\section{The stochastic exit problem}
\label{sec_exit}

Let us now specify the mathematical set-up of our analysis. The stochastically
perturbed systems that we consider are It\^o stochastic differential equations
(SDEs) of the form 
\begin{equation}
 \label{exit01}
 \6x_t = f(x_t)\6t + \sigma g(x_t)\6W_t\;,
\end{equation} 
where $x_t$ takes values in $\R^2$, and $W_t$ denotes $k$-dimensional standard
Brownian motion, for some $k\geqs2$. Physically, this describes the situation
of Gaussian white noise with a state-dependent amplitude $g(x)$. Of course, one
may consider more general types of noise, such as time-correlated noise, but
such a setting is beyond the scope of the present analysis.

The drift term $f$ and the diffusion term $g$ are assumed to satisfy the usual
regularity assumptions guaranteeing the existence of a unique strong solution
for all square-integrable initial conditions $x_0$ (see for
instance~\cite[Section~5.2]{Oksendal}). In addition, we assume that $g$
satisfies the uniform ellipticity condition 
\begin{equation}
 \label{exit02}
 c_1 \norm{\xi}^2 \leqs 
 \langle \xi, D(x) \xi \rangle
 \leqs c_2 \norm{\xi}^2
 \qquad \forall x,\xi\in\R^2\;,
\end{equation} 
where $c_2\geqs c_1>0$. Here $D(x)=gg^\text{T}(x)$ denotes the \emph{diffusion
matrix}. 

We finally assume that the drift term $f(x)$ results from a system of the
form~\eqref{sdet06}, in a synchronized case where there is one stable and one
unstable orbit. It will be convenient to choose coordinates $x=(\ph,r)$ such
that the unstable periodic orbit is given by $r=0$ and the stable orbit is
given by $r=1/2$. The original system is defined on a torus, but we will unfold
everything to the plane $\R^2$, considering $f$ (and $g$) to be 
periodic with period $1$ in both variables $\ph$ and $r$. The resulting system
has the form\footnote{In the situation of synchronised phase oscillators
considered here, the change of variables yielding~\eqref{exit03} is global. In
more general situations considered in~\cite{BG_periodic2}, such a
transformation may only exist locally, in a domain surrounding the stable and
unstable orbit, and the analysis applies up to the first-exit time from that
domain.}
\begin{align}
 \nonumber
 \6r_t &= f_r(r_t,\ph_t)\6t + \sigma g_r(r_t,\ph_t)\6W_t\;, \\
 \6\ph_t &= f_\ph(r_t,\ph_t)\6t + \sigma g_\ph(r_t,\ph_t)\6W_t\;, 
 \label{exit03}
\end{align}
and admits unstable orbits of the form $\set{r=n}$ and stable orbits of the
form $\set{r=n+1/2}$ for any integer $n$. In particular, $f_r(n/2,\ph)=0$ for
all $n\in\Z$ and all $\ph\in\R$. Using a so-called
equal-time parametrisation of the periodic orbits, it is also possible to
assume that $f_\ph(0,\ph)=1/T_+$ and $f_\ph(1/2,\ph)=1/T_-$ for all $\ph\in\R$,
where $T_\pm$ denote the periods of the unstable and stable
orbit~\cite[Proposition~2.1]{BG_periodic2}\footnote{Because of second-order
terms in It\^o's formula, the periodic orbits of the reparametrized system may
not lie exactly on horizontal lines $r=n/2$, but be shifted by a small amount of
order $\sigma^2$.}. The instability of the orbit $r=0$ means that the
characteristic exponent 
\begin{equation}
 \label{exit04}
 \lambda_+ = 
 \int_0^1 \partial_r f_r(0,\ph)\6\ph
\end{equation} 
is strictly positive. The similarly defined exponent $-\lambda_-$ of the stable
orbit is negative. It is then possible to redefine $r$ in such a way that 
\begin{align}
 \nonumber
 f(r,\ph) &= \lambda_+ r + \Order{r^2}\;, \\
 f(r,\ph) &= -\lambda_-(r-1/2) + \Order{(r-1/2)^2} 
 \label{exit05}
\end{align}
for all $\ph\in\R$ (see again~\cite[Proposition~2.1]{BG_periodic2}). It will be
convenient to assume that $f_\ph(r,\ph)$ is positive, bounded away from zero,
for all $(r,\ph)$. 

Finally, for definiteness, we assume that the system is asymmetric, in such a
way that it is easier for the system starting with $r$ near $-1/2$ to reach the
unstable orbit in $r=0$ rather than its translate in $r=-1$. This corresponds
intuitively to the potential in~\figref{fig_phase_slip} tilting to the right,
and can be formulated precisely in terms of large-deviation rate functions
introduced in Section~\ref{ssec_ldp} below. 


\subsection{The harmonic measure}
\label{ssec_hm}

Fix an initial condition $(r_0\in(-1,0),\ph_0=0)$ and let 
\begin{equation}
 \label{hm01}
 \tau_0 = \inf\setsuch{t>0}{r_t = 0}
\end{equation} 
denote the first-hitting time of the unstable orbit. Note that $\tau_0$ can also
be viewed as the first-exit time from the set $\cD=\set{r<0}$. The crossing
phase $\ph_{\tau_0}$ is equal to the exit location from $\cD$, and its
distribution
is also known as the \emph{harmonic measure} associated with the
infinitesimal generator 
\begin{equation}
 \label{hm02} 
 L = \sum_{i\in\set{r,\ph}} f_i(x) \dpar{}{x_i} + 
\frac{\sigma^2}{2} \sum_{i,j\in\set{r,\ph}}D_{ij}(x)
\dpar{^2}{x_i\partial x_j}
\end{equation}
of the diffusion process. It is known that the harmonic measure admits a smooth
density for sufficiently smooth $f$, $g$ and $\partial\cD$
\cite{BenArous_Kusuoka_Stroock_1984}.  

It follows from Dynkin's formula~\cite[Section 7.4]{Oksendal} that for any
continuous bounded test function $b:\partial\cD\to\R$, the function
$h(x)=\expecin{x}{b(\ph_{\tau_0})}$ satisfies the boundary value
problem\footnote{Several tools will require $\cD$ to be a bounded set. This does
not create any problems, because our assumptions on the deterministic vector
field imply that probabilities are only affected by a negligible amount if
$\cD$ is replaced by its intersection with some large compact set.}
\begin{alignat}{3}
\nonumber
 Lh(x) &= 0 	      &x\in&\cD\;, \\
 h(x)  &= b(x) 	\qquad &x\in&\partial\cD\;.
 \label{hm03}
\end{alignat}
One may think of the case of a sequence $b_n$ converging to the
indicator function $1_{\set{\ph\in B}}$. Then the associated $h_n$ converge to
$h(x)=\probin{x}{\ph_{\tau_0}\in B}$, giving the harmonic measure of
$B\subset\partial\cD$.  While it is
in general difficult to solve the equation~\eqref{hm03} explicitly, the fact
that $Lh=0$ ($h$ is said to be \emph{harmonic}) yields some useful information.
In particular, $h$ satisfies a maximum principle and Harnack
inequalities~\cite[Chapter~9]{Gilbarg_Trudinger}.


\subsection{Large deviations}
\label{ssec_ldp}

The theory of large deviations has been developed in the context of general SDEs
of the form~\eqref{exit01} by Freidlin and Wentzell~\cite{FW}. With a path
$\gamma:[0,T]\to\R\!^2$ it associates the \emph{rate function} 
\begin{equation}
 \label{ldp01}
 I_{[0,T]}(\gamma) = \frac12 \int_0^T (\dot\gamma_s-f(\gamma_s))^\text{T} 
 D(\gamma_s)^{-1} (\dot\gamma_s-f(\gamma_s)) \6s\;.
\end{equation} 
Roughly speaking, the probability of the stochastic process tracking a
particular path $\gamma$ on $[0,T]$ behaves like
$\e^{-I_{[0,T]}(\gamma)/\sigma^2}$ as $\sigma\to0$. 

In the case of the stochastic exit problem from a domain $\cD$, containing a
unique attractor\footnote{In the present context, an attractor $\cA$ is an 
equivalence set for the equivalence relation $\sim_\cD$ on $\cD$, defined by 
$x\sim_\cD y$ whenever one can find a $T>0$ and a path $\gamma$ connecting
$x$ and $y$ in time $T$ and staying in $\cD$ such that $I_{[0,T]}(\gamma)=0$,
cf.~\cite[Section~6.1]{FW}. In addition, $\cD$ should belong to the basin of
attraction of $\cA$. In other words, deterministic orbits starting in
$\cD$ should converge to $\cA$, and the set $\cA$ should have no proper subsets
invariant under the deterministic flow.} $\cA$, the theory of large deviations
yields in particular the following information. For $y\in\partial\cD$ let 
\begin{equation}
 \label{ldp02}
 V(y) = \inf_{T>0}\inf_{\gamma\colon \cA\to y} I_{[0,T]}(\gamma)\;,
\end{equation} 
be the \emph{quasipotential}, where the second infimum runs over all paths
connecting $\cA$ to $y$ in time $T$. Then for $x_0\in\cA$ 
\begin{equation}
 \label{ldp03}
 \lim_{\sigma\to0} \sigma^2 \log\bigexpecin{x_0}{\tau_0} =
\inf_{y\in\partial\cD} V(y)\;.
\end{equation} 
Furthermore, if the quasipotential reaches its infimum at a unique isolated
point $y^*\in\partial\cD$, then 
\begin{equation}
 \label{ldp04}
 \lim_{\sigma\to0} \bigprobin{x_0}{\norm{x_{\tau_0} - y^*} > \delta} = 0
\end{equation} 
for all $\delta>0$. This means that exit locations concentrate in points where
the quasipotential is minimal. 

If we try to apply this last result to our problem, however, we realise that it
does not give any useful information. Indeed, the quasipotential $V$ is
constant on the unstable orbit $\set{r=0}$, because any two points on the orbit
can be connected at zero cost, just by tracking the orbit. 

Nevertheless, the theory of large deviations provides some useful information,
since it allows to determine most probable exit paths. The rate
function~\eqref{ldp01} can be viewed as a Lagrangian action. Minimizing the
action via Euler--Lagrange equations is equivalent to solving Hamilton
equations with Hamiltonian 
\begin{equation}
 \label{ldp05}
 H(\gamma,\eta) = \frac12 \eta^\text{T} D(\gamma) \eta + f(\gamma)^\text{T}
\eta\;,
\end{equation}
where $\eta=D(\gamma)^{-1}(\dot\gamma - f(\gamma))=(p_r,p_\ph)$ is the moment
conjugated to $\gamma$. This is a two-degrees-of-freedom Hamiltonian, whose
orbits live in a four-dimensional space, which is, however, foliated into
three-dimensional hypersurfaces of constant $H$. 

Writing out the Hamilton equations (cf.~\cite[Section~2.2]{BG_periodic2}) shows
that the plane $\set{p_r=p_\ph=0}$ is invariant. It corresponds to deterministic
motion, and contains in particular the periodic orbits of the original system. 
These turn out to be hyperbolic periodic orbits of the three-dimensional
flow on the set $\set{H=0}$, with characteristic exponents $\pm\lambda_+T_+$ and
$\mp\lambda_-T_-$. Typically, the unstable manifold of the stable orbit and
the stable manifold of the unstable orbit will intersect transversally, and the
intersection will correspond to a minimiser $\gamma_\infty$ of the rate
function, connecting the two orbits in infinite time. In the sequel, we will
assume that this is the case, and that $\gamma_\infty$ is unique up to shifts
$\ph\mapsto\ph+n$ (cf.~\cite[Assumption~2.3 and Figure 2.2]{BG_periodic2}
and~\figref{fig_ldp_section}). 

\begin{example}
\label{ex:Melnikov}
Assume that in~\eqref{exit03}, $f_r(r,\ph)=\sin(2\pi r)[1+\eps\sin(2\pi
r)\cos(2\pi\ph)]$, whereas $f_\ph(r,\ph)=\omega$ and $g_r=g_\ph=1$. The
resulting Hamiltonian takes the form 
\begin{equation}
 \label{ldp06}
 H(r,\ph,p_r,p_\ph) = \frac12(p_r^2+p_\ph^2) + \sin(2\pi r)[1+\eps\sin(2\pi
r)\cos(2\pi\ph)] p_r + \omega p_\ph\;.
\end{equation} 
In the limiting case $\eps=0$, the system is invariant under shifts along
$\ph$, and thus $p_\ph$ is a first integral. The unstable and stable manifolds
of the periodic orbits do not intersect transversally. In fact, they are
identical, and given by the equation $p_r=-2\sin(2\pi r)$, $p_\ph=0$. However,
for small positive $\eps$, Melnikov's method~\cite[Chapter~6]{GH} allows to
prove that the two manifolds intersect transversally. 
\end{example}


\subsection{Random Poincar\'e maps}
\label{ssec_rpm}

\begin{figure}
\centerline{\includegraphics*[clip=true,height=55mm]{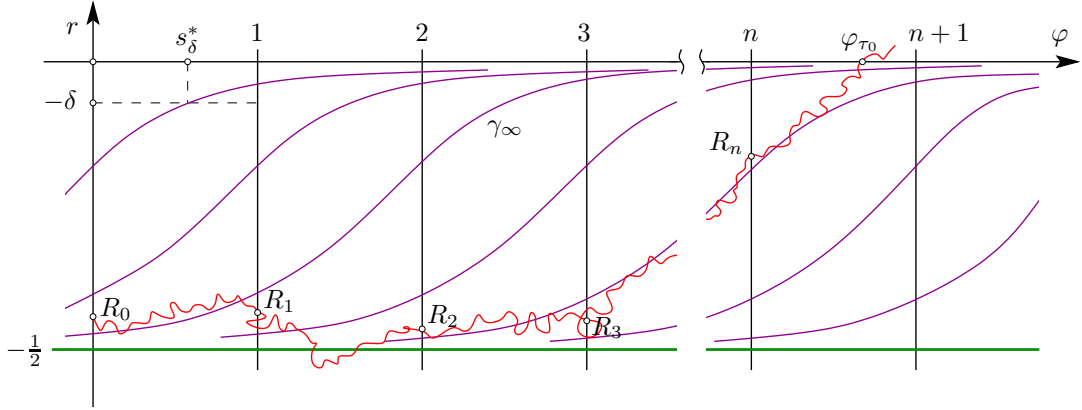}}
 \figtext{ 
	\writefig	0.9	5.5	$r$
	\writefig	3.35	5.4	$1$
	\writefig	5.55	5.4	$2$
	\writefig	7.7	5.4	$3$
	\writefig	9.9	5.4	$n$
	\writefig	11.2	5.35	$\ph_{\tau_0}$
	\writefig	12.1	5.4	$n+1$
	\writefig	14.0	5.4	$\ph$
	\writefig	6.5	4.2	$\gamma_\infty$
	\writefig	1.35	1.75	$R_0$
	\writefig	3.525	1.85	$R_1$
	\writefig	5.7	1.65	$R_2$
	\writefig	7.88	1.5	$R_3$
	\writefig	9.45	3.95	$R_n$
	\writefig	0.1	1.2	$-\frac12$
	\writefig	0.6	4.5	$-\delta$
	\writefig	2.35	5.35	$s^*_\delta$
 }
\caption[]{Definition of the random Poincar\'e map. The sequence
$(R_0,R_1,\dots,R_{\intpart{\tau_0}})$ forms a Markov chain, killed at
$\intpart{\tau_0}$, where $\tau_0$ is the first time the process hits the
unstable periodic orbit in $r=0$. The process is likely to track a translate of
the path $\gamma_\infty$ minimizing the rate function. 
}
\label{fig_random_poincare}
\end{figure}

The periodicity in $\ph$ of the system~\eqref{exit03} yields useful information
on the distribution of the crossing phase $\ph_{\tau_0}$. We fix an initial
condition $(r_0,\ph_0=0)$ with $-1<r_0<0$ and define for every $n\in\N$  
\begin{equation}
 \label{rpm01}
 \tau_n = \inf\setsuch{t>0}{\ph_t = n}\;.
\end{equation} 
In addition, we kill the process at the first time $\tau_0$ it hits the unstable
orbit at $r=0$, and set $\tau_n=\infty$ whenever $\ph_{\tau_0} < n$. The
sequence
$(R_0,R_1,\dots,R_N)$ defined by $R_k=r_{\tau_k}$ and $N = \intpart{\tau_0}$
defines a substochastic Markov chain on $E=\R_-$, which records the successive
values of $r$ whenever $\ph$ reaches for the first time the vertical lines
$\set{\ph=k}$ (\figref{fig_random_poincare}). This Markov chain has a transition
kernel with density $k(x,y)$,
that is, 
\begin{equation}
 \label{rpm02}
  \pcond{R_{n+1}\in B}{r_{\tau_n}=R_n} =: K(R_n,B) = \int_B k(R_n,y)\6y\;,
  \qquad B\subset E
\end{equation} 
for all $n\geqs0$. In fact, $k(x,y)$ is obtained\footnote{Again, for technical
reasons, one has to replace the set $\set{\ph<1,r<0}$ by a large bounded set,
but this modifies probabilities by exponentially small errors that will be
negligible.} by restricting to $\set{\ph=1}$ the harmonic measure for exit from
$\set{\ph<1,r<0}$, for a starting point $(0,x)$. We denote by $K^n$ the $n$-step
transition probabilities defined recursively by 
\begin{equation}
 \label{rpm03}
 K^n(R_0,B) := \probin{R_0}{R_{n}\in B} 
 = \int_E K^{n-1}(R_0,\6y)K(y,B)\;.
\end{equation} 
If we decompose $\ph=n+s$ into its integer part $n$ and fractional part $s$, we
can write 
\begin{equation}
 \label{rpm04}
 \probin{0,R_0}{\ph_{\tau_0}\in n+\6s} 
 = \int_E K^n(R_0,\6y) \probin{0,y}{\ph_{\tau_0}\in\6s}\;.
\end{equation} 
Results by Fredholm~\cite{Fredholm_1903} and Jentzsch~\cite{Jentzsch1912},
extending the well-known Perron--Frobenius theorem, show that $K$ admits a
spectral decomposition. In particular, $K$ admits a \emph{principal eigenvalue}
$\lambda_0$, which is real, positive, and larger than the module of all other
eigenvalues $\lambda_i$. The substochastic nature of the Markov chain, due to
the killing, implies that $\lambda_0<1$. If we can obtain a bound $\rho<1$ on
the ratio $\abs{\lambda_i}/\lambda_0$ valid for all $i\geqs1$ (\emph{spectral
gap} estimate), then we can write 
\begin{equation}
 \label{rpm05}
 K^n(R_0,B) = \lambda_0^n \pi_0(B) \bigl[ 1+\Order{\rho^n} \bigr]
\end{equation} 
as $n\to\infty$. Here $\pi_0$ the probability measure defined by the right
eigenfunction of $K$ corresponding to
$\lambda_0$~\cite{Jentzsch1912,KreinRutman1948,Birkhoff1957}. Since 
\begin{equation}
 \label{rpm06}
 \pcondin{R_0}{R_n\in B}{N>n} 
 = \frac{K^n(R_0,B)}{K^n(R_0,E)} = \pi_0(B) \bigl[ 1+\Order{\rho^n} \bigr]\;,
\end{equation}
the measure $\pi_0$ represents the asymptotic probability distribution of the
process conditioned on having survived. It is called the \emph{quasistationary
distribution} of the process~\cite{Yaglom56,Seneta_VereJones_1966}.
Plugging~\eqref{rpm05} into~\eqref{rpm04}, we see that 
\begin{equation}
 \label{rpm07}
 \probin{0,R_0}{\ph_{\tau_0}\in n+\6s} 
 = \lambda_0^n \int_E \pi_0(\6y) \probin{0,y}{\ph_{\tau_0}\in\6s}
 \bigl[ 1+\Order{\rho^n} \bigr]\;.
\end{equation} 
This implies that the distribution of crossing phases $\ph_{\tau_0}$
asymptotically behaves like a periodically modulated geometric distribution: its
density $P$ satisfies $P(\ph+1) = \lambda_0 P(\ph)$ for large $\ph$. 


\section{Log-periodic oscillations}
\label{sec_logper}

In this section, we formulate our main result on the distribution of crossing
phases $\ph_{\tau_0}$ of the unstable orbit, which describe the position of
phase
slips. This result is based on the work~\cite{BG_periodic2}, but we will
reformulate it in order to allow comparison with related results. 


\subsection{The distribution of crossing phases}
\label{ssec_cycling}

Before stating the results applying to general nonlinear equations of the
form~\eqref{exit03}, let us consider a system approximating it near the 
unstable orbit at $r=0$, given by 
\begin{align}
 \nonumber
 \6r_t &= \lambda_+ r_t \6t + \sigma g_r(0,\ph_t)\6W_t\;, \\
 \6\ph_t &= \frac{1}{T_+}\6t\;. 
 \label{cycling01}
\end{align}
This system can be transformed into a simpler form by combining a
$\ph$-dependent scaling and a random time change. Indeed, let 
$\hper(\ph)$ be the periodic solution of 
\begin{equation}
 \label{cycling03}
 \frac{\6h}{\6\ph} = 2\lambda_+T_+ h - D_{rr}(0,\ph)\;, 
\end{equation} 
and set $r=[2\lambda_+T_+\hper(\ph)]^{1/2}y$. Then  It\^o's formula yields 
\begin{equation}
 \label{cycling03b}
 \6y_t = \frac{D_{rr}(0,\ph_t)}{2T_+\hper(\ph_t)} y_t \6t 
 + \sigma \frac{g_r(0,\ph_t)}{\sqrt{2\lambda_+T_+\hper(\ph_t)}}\6W_t\;.
\end{equation} 
Next, we introduce the function 
\begin{equation}
 \label{cycling05}
 \theta(\ph) = \lambda_+T_+\ph - \frac12 \log \biggl(
\frac{\hper(\ph)}{2\hper(0)^2}\biggr)\;,
\end{equation} 
which should be thought of as a parametrisation of the unstable orbit that makes
the stochastic dynamics as simple as possible. Indeed, note that
$\theta(\ph+1)=\theta(\ph)+\lambda_+T_+$ and 
$\theta'(\ph) = D_{rr}(0,\ph)/(2\hper(\ph)) > 0$. Thus the random time change 
$\6t = [\lambda_+T_+/\theta'(\ph_t)]\6s$ yields the equation 
\begin{equation}
 \6y_s = \lambda_+ y_s \6s + \sigma \tilde g_r(0,s)\6W_s\;,
 \qquad \tilde g_r(0,s) = \frac{g_r(0,\ph_t)}{\sqrt{D_{rr}(0,\ph_t)}}\;,
 \label{cycling02}
\end{equation}
in which the effective noise intensity is constant, i.e.\   
$\widetilde D_{rr}(0,s) = \tilde g_r(0,s)\tilde g_r(0,s)^\text{T}=1$.

In order to formulate the main result of this section, we set 
\begin{equation}
 \label{cycling06}
 \theta_\delta(\ph) = \theta(\ph) - \log\delta 
 + \log \biggl( \frac{\hper(s^*_\delta)}{\hper(0)}\biggr)\;,
\end{equation} 
where $s^*_\delta\in[0,1)$ is such that $(s^*_\delta,-\delta)$ belongs to a
translate of the optimal path $\gamma_\infty$ (\figref{fig_random_poincare}). 
A real-valued random variable $Z$ is said to follow the \emph{standard Gumbel
law} if 
\begin{equation}
 \label{cycling07}
 \bigprob{Z\leqs t} = \e^{-\e^{-t}}
 \qquad \forall t\in\R\;.
\end{equation} 
\figref{fig_Gumbel} shows the density $\e^{-t-\e^{-t}}$ of a standard Gumbel
law. 

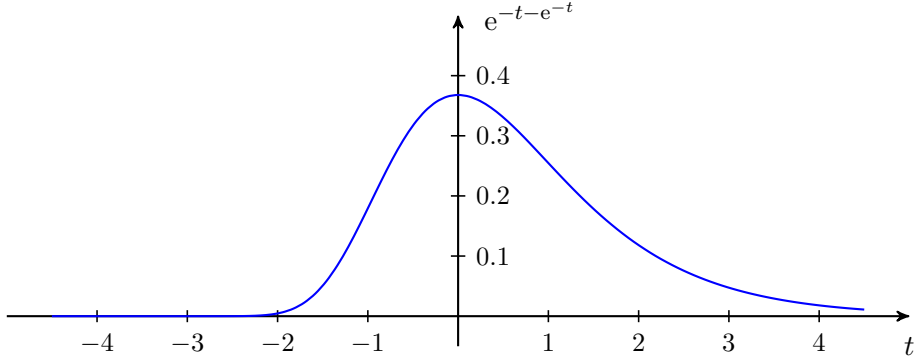
\begin{figure}
\begin{center}
\begin{tikzpicture}[>=stealth',main node/.style={circle,minimum
size=0.25cm,fill=blue!20,draw},x=1.2cm,y=0.8cm]

\draw[->,thick] (-5,0) -> (5,0);
\draw[->,thick] (0,-0.5) -> (0,5);

\foreach \x in {1,...,4}
\draw[semithick] (\x,-0.1) -- node[below=0.1cm] {{\small $\x$}}
(\x,0.1);

\foreach \x in {-1,...,-4}
\draw[semithick] (\x,-0.1) -- node[below=0.1cm] {{\small $\x$}}
(\x,0.1);

\foreach \y in {1,...,4}
\draw[semithick] (-0.08,\y) -- node[right=0.1cm] {{\small $0.\y$}}
(0.08,\y);

\draw[blue,thick,-,smooth,domain=-4.5:4.5,samples=75,/pgf/fpu,/pgf/fpu/output
format=fixed] plot (\x, {
10*exp(-\x -exp(-\x) ) });

\node[] at (5.0,-0.5) {$t$};
\node[] at (0.8,5.0) {$\e^{-t-\e^{-t}}$};
\end{tikzpicture}
\vspace{-5mm}
\end{center}
\caption[]{Density of a standard Gumbel random variable.
}
\label{fig_Gumbel}
\end{figure}

\begin{theorem}[{\cite[Theorem~2.4]{BG_periodic2}}]
\label{thm_periodic2} 
Fix an initial condition $(r_0,\ph_0=0)$ of the nonlinear
system~\eqref{exit03} with $r_0$ sufficiently close to
the stable orbit in $r=-1/2$. There exist $\beta, c>0$ such that for any
sufficiently small $\delta,\Delta>0$, there exists $\sigma_0>0$ such that for
$0<\sigma<\sigma_0$, 
\begin{align}
\nonumber
\biggprobin{r_0,0}{\frac{\theta_\delta(\ph_{\tau_0})}{\lambda_+T_+} \in
[t,t+\Delta]} 
={}& \Delta [1-\lambda_0(\sigma)]\lambda_0(\sigma)^t 
Q_{\lambda_+T_+} \biggl( \frac{\abs{\log\sigma}}{\lambda_+T_+} - t +
\Order{\delta}\biggr) \\
&{}\times \biggl[ 1 + \Order{\e^{-c\ph/\abs{\log\sigma}}} + 
\Order{\delta\abs{\log\delta}} + \Order{\Delta^\beta}\biggr]\;.
\label{cycling08} 
\end{align}
Here $\lambda_0(\sigma)$ is the principal eigenvalue of the Markov chain, and
$1-\lambda_0(\sigma)$ is of order $\e^{-I_\infty/\sigma^2}$, where
$I_\infty=I(\gamma_\infty)$ is the value of the rate function for the path
$\gamma_\infty$. Furthermore, $Q_{\lambda_+T_+}(x)$ is the periodic function,
with period $1$, given by 
\begin{equation}
 \label{cycling08A}
 Q_{\lambda_+T_+}(x) = \sum_{n\in\Z} A\bigl( \lambda_+T_+(n-x) \bigr)\;,
\end{equation}
where 
\begin{equation}
 \label{cycling09}
 A(x) = \exp \Bigl\{ -2x - \frac12 \e^{-2x} \Bigr\}
\end{equation} 
is the density of $(Z-\log2)/2$, with $Z$ a standard Gumbel variable. 
\end{theorem}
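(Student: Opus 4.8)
The plan is to combine the spectral analysis of the random Poincaré map from Section~\ref{ssec_rpm} with an explicit first-passage computation for the linearized system~\eqref{cycling01} near the unstable orbit. The representation~\eqref{rpm07} already isolates the geometric factor $\lambda_0(\sigma)^n$ and the quasistationary distribution $\pi_0$, so the claim reduces to understanding the law of the fractional part of $\ph_{\tau_0}$ when the process starts from $(0,y)$ with $y$ drawn from $\pi_0$, i.e.\ the conditional crossing-phase distribution within a single period. Since $\pi_0$ is concentrated near the stable orbit, this conditional law is determined entirely by the last part of the rare excursion, along which the process tracks a translate of $\gamma_\infty$, enters a $\delta$-neighbourhood of $\set{r=0}$ at the phase $s^*_\delta$, and then escapes. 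I would therefore first set up a coupling between the true process on this last stretch and the solution of the linear model~\eqref{cycling01}, with errors of relative order $\delta\abs{\log\delta}$ (from the quadratic terms in~\eqref{exit05}) and $\e^{-c\ph/\abs{\log\sigma}}$ (from the mixing/spectral-gap bound $\rho<1$).

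\textbf{The linear model.} For~\eqref{cycling01} I would use the transformations already prepared in~\eqref{cycling03}--\eqref{cycling02}: the scaling $r=[2\lambda_+T_+\hper(\ph)]^{1/2}y$ together with the time change $\6t=[\lambda_+T_+/\theta'(\ph_t)]\6s$ turns the equation into $\6y_s=\lambda_+y_s\6s+\sigma\tilde g_r(0,s)\6W_s$ with unit effective diffusion and with $\theta(\ph_s)=\theta(\ph_0)+\lambda_+s$ along solutions. Solving explicitly, $y_s=\e^{\lambda_+s}(y_0+\sigma M_s)$ with $M_s=\int_0^s\e^{-\lambda_+u}\tilde g_r(0,u)\6W_u$, whose bracket increases to $1/(2\lambda_+)$, so $M_s\to M_\infty\sim\cN(0,1/(2\lambda_+))$ almost surely. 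Hence the first $s$ with $y_s=0$ equals the first passage of the time-changed Brownian motion $M$ to a level $a$ of order $1/\sigma$, run against the bounded clock $\langle M\rangle_s=(1-\e^{-2\lambda_+s})/(2\lambda_+)$, and $\e^{-2\lambda_+\tau_0}=1-2\lambda_+\langle M\rangle_{\tau_0}$. Conditioning on $\set{\tau_0<\infty}$ and using the inverse-Gaussian law of the Brownian first-passage time, one finds that $a^2\lambda_+\e^{-2\lambda_+\tau_0}$ converges in distribution to a standard exponential as $\sigma\to0$; equivalently $2\lambda_+\tau_0-2\abs{\log\sigma}$ minus a constant converges to a standard Gumbel variable $Z$, and a change of variables identifies the limiting density of $\theta(\ph_{\tau_0})-\abs{\log\sigma}$ with a shift of the function $A$ of~\eqref{cycling09} --- the exponential rate $2$ coming from the square-root scaling and the factor $\e^{-2\lambda_+s}$ in the clock, and the coefficient $\tfrac12$ from the variance $1/(2\lambda_+)$ of $M_\infty$.

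\textbf{Wrapping and assembly.} Because $\theta(\ph+1)=\theta(\ph)+\lambda_+T_+$, the normalised quantity $\theta_\delta(\ph_{\tau_0})/(\lambda_+T_+)$ increases by exactly $1$ under $\ph\mapsto\ph+1$, and the particular translate of $\gamma_\infty$ actually followed (equivalently, the integer number of near misses before the escape succeeds) is itself random with a geometric law carried by $\lambda_0(\sigma)$. Decomposing into this integer part and the fractional part and summing the shifted Gumbel densities over $n\in\Z$ produces precisely the periodic function $Q_{\lambda_+T_+}(x)=\sum_{n\in\Z}A(\lambda_+T_+(n-x))$; matching with the factor $[1-\lambda_0(\sigma)]\lambda_0(\sigma)^t$ from~\eqref{rpm07}, and translating the $y$-scale threshold back to the $r$-scale $\delta$ through the corrections $-\log\delta+\log(\hper(s^*_\delta)/\hper(0))$ built into $\theta_\delta$ in~\eqref{cycling06} (the residual mismatch being the $\Order{\delta}$ shift inside $Q$), yields~\eqref{cycling08}. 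The estimate $1-\lambda_0(\sigma)=\Order{\e^{-I_\infty/\sigma^2}}$ follows from the large-deviation formula~\eqref{ldp03} for the mean exit time together with the spectral representation.

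\textbf{Main obstacle.} The delicate part is the uniformity, not the limit identification. First, one must control the linearisation~\eqref{cycling01} of~\eqref{exit03} uniformly over the whole escape stretch and, crucially, show that the distribution with which the rescaled variable $y$ enters the $\delta$-neighbourhood of $\set{r=0}$ is, to within the stated errors, independent of $\sigma$; this is where the transversality of $\gamma_\infty$ (Assumption~2.3 of~\cite{BG_periodic2}) and the equal-time parametrisation enter. Second, one needs the spectral-gap bound $\rho<1$ for the substochastic kernel $K$, underlying both~\eqref{rpm05} and the $\Order{\e^{-c\ph/\abs{\log\sigma}}}$ term, with constants tracked in $\sigma$ via Perron--Frobenius/Jentzsch-type arguments. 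Finally, upgrading the convergence in distribution to the quantitative statement about densities on intervals $[t,t+\Delta]$ requires Hölder regularity of the harmonic density, available from~\cite{BenArous_Kusuoka_Stroock_1984} and the Harnack inequalities of Section~\ref{ssec_hm}, and produces the $\Order{\Delta^\beta}$ error.
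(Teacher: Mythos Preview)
The paper does not prove this theorem. Theorem~\ref{thm_periodic2} is quoted verbatim from~\cite[Theorem~2.4]{BG_periodic2}; the present paper takes it as input and uses it to derive Theorem~\ref{thm_convergence_Gumbel} (the proof of the latter is in Appendix~\ref{sec_proof_Gumbel}). So there is no ``paper's own proof'' to compare your proposal against.

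That said, your outline is broadly consistent with the architecture of the argument in~\cite{BG_periodic2} as it can be reconstructed from the fragments used here. In particular, Proposition~\ref{prop_pG_u1} in Appendix~\ref{ssec_pG_u} invokes the key intermediate estimate~\cite[(7.14) and (7.18)]{BG_periodic2}, namely
\[
Q_{\widetilde\Delta}(0,\ell+s) = C(\sigma)\widetilde\Delta\,\theta'_\delta(s)\,A\bigl(t-|\log\sigma|+\Order{\delta\ell}\bigr)\bigl[1+\Order{\widetilde\Delta^\beta}\bigr]\;,
\]
which is exactly the ``within one period'' density you are aiming for in your linear-model step, and the spectral decomposition~\eqref{rpm07} provides the geometric factor, matching your assembly step. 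Your identification of the limiting density via the explicit solution of~\eqref{cycling02} and the inverse-Gaussian first-passage law is essentially the computation underlying Bakhtin's argument in Section~\ref{ssec_Bakhtin}; it gives the right answer and the right heuristics, though in~\cite{BG_periodic2} the density $A$ is obtained by a more direct PDE/asymptotic analysis of the killed linear process rather than via conditioning on $\set{\tau_0<\infty}$.

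Where your sketch is thinnest is precisely where you flag it: the coupling of the nonlinear system to the linear model uniformly over the escape stretch, with the $\Order{\delta\abs{\log\delta}}$ control, and the spectral-gap estimate producing the $\Order{\e^{-c\ph/\abs{\log\sigma}}}$ term. These are the substantial parts of~\cite{BG_periodic2} (Sections~4--7 there), and your proposal does not supply any mechanism beyond naming them. If you intend to write a self-contained proof, those two blocks are where essentially all the work lies; the rest is bookkeeping.
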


We will discuss various implications of this result in the next sections. The
periodic dependence on $\log\sigma$ will be addressed in Section~\ref{ssec_osc},
and we will say more on the Gumbel law in Section~\ref{sec_Gumbel}. For now, let
us give a reformulation of the theorem, which will ease comparison with other
related results. Following~\cite{HitczenkoMedvedev,HitczenkoMedvedev1}, we say
that an integer-valued random variable $Y$ is \emph{asymptotically geometric}
with success probability $p$ if 
\begin{equation}
 \label{cycling10}
 \lim_{n\to\infty} \pcond{Y=n+1}{Y>n} = p\;.
\end{equation} 
We use $\lim_{n\to\infty}\Law(X_n)=\Law(X)$ to denote
convergence in distribution of a sequence of random variables $X_n$ to a
random variable $X$. 

\begin{theorem}
\label{thm_convergence_Gumbel} 
There exists a family 
$(Y_m^\sigma)_{m\in\N,\sigma>0}$ of asymptotically geometric random
variables such that
\begin{equation}
 \label{cycling11}
 \lim_{m\to\infty} \Bigl[
 \lim_{\sigma\to0} \Law \bigl( \theta(\ph_{\tau_0}) - \abs{\log\sigma} -
\lambda_+T_+ Y_m^\sigma \bigr)
 \Bigr]
 = \Law \biggl( \frac{Z}{2} - \frac{\log2}{2}\biggr) \;,
\end{equation} 
where $Z$ is a standard Gumbel random variable independent of the
$Y_m^\sigma$. The success probability of $Y_m^\sigma$ is of the form 
$p_{m,\sigma} = \e^{-I_m/\sigma^2}$, where $I_m = I_\infty +
\Order{\e^{-2m\lambda_+T_+}}$. 
\end{theorem}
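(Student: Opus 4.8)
The plan is to derive Theorem~\ref{thm_convergence_Gumbel} from Theorem~\ref{thm_periodic2} by reading the periodically modulated geometric law in~\eqref{cycling08} as the law of an asymptotically geometric integer variable plus an independent scaled Gumbel, the key structural input being that, by~\eqref{cycling08A}, the modulating function $Q_{\lambda_+T_+}$ is nothing but the periodization $\sum_{n\in\Z}A(\lambda_+T_+(n-\cdot))$ of the scaled Gumbel density $A$ of~\eqref{cycling09}. The first step is to convert~\eqref{cycling08} into a density statement: dividing by $\Delta$ and letting $\Delta\to0$, the law of $\theta_\delta(\ph_{\tau_0})$ has, after rescaling by $\lambda_+T_+$, density $[1-\lambda_0(\sigma)]\,\lambda_0(\sigma)^t\,Q_{\lambda_+T_+}\bigl(\abs{\log\sigma}/(\lambda_+T_+)-t+\Order\delta\bigr)$ in the variable $t$, up to the multiplicative factor $1+\Order{\delta\abs{\log\delta}}+\Order{\e^{-c\ph/\abs{\log\sigma}}}$. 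Since $\theta_\delta-\theta$ is an explicit constant, since $1-\lambda_0(\sigma)$ is of order $\e^{-I_\infty/\sigma^2}$ (so that powers $\lambda_0(\sigma)^{\Order{\abs{\log\sigma}}}$ tend to $1$), and since the fractional modulation $\lambda_0(\sigma)^{t-\intpart t}\to1$ uniformly, this becomes, to leading order, a statement about $\theta(\ph_{\tau_0})-\abs{\log\sigma}$: its law, rescaled by $\lambda_+T_+$, is a geometric distribution (with ratio $\lambda_0(\sigma)$) modulated by the periodization of $A$.

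The core of the argument is to disentangle the two factors. On the one hand, the geometric skeleton is the integer part of $\ph_{\tau_0}$, suitably recentred; that $\intpart{\ph_{\tau_0}}$ is asymptotically geometric with success probability of order $1-\lambda_0(\sigma)$ is precisely the content of the spectral decomposition and spectral-gap estimate~\eqref{rpm05}--\eqref{rpm07} for the random Poincar\'e map. On the other hand, the modulating periodization $\sum_{n\in\Z}A(\lambda_+T_+(n-\cdot))$ must be unfolded back to the single density $A$: I would do this by drawing, conditionally on $\ph_{\tau_0}$, a branch index $n$ from a finite window of radius $\sim m$ with probability proportional to the $n$-th summand of~\eqref{cycling08A}, using an auxiliary independent randomisation, and building $Y_m^\sigma$ out of the recentred $\intpart{\ph_{\tau_0}}$ and this branch index. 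The recentring must simultaneously subtract the $\abs{\log\sigma}$-shift appearing on the left of~\eqref{cycling11} and cancel the log-periodic (cycling) oscillation $\{\abs{\log\sigma}/(\lambda_+T_+)\}$ carried by the argument of $Q_{\lambda_+T_+}$, which I would arrange by coupling the branch selection to this oscillation and by taking the auxiliary parameter $\delta=\delta_m$ to depend on $m$ in a way that keeps the residual shift controllable. The truncation of the branch window to radius $\sim m$ is where the index $m$ enters: since $A(x)\asymp\e^{-2x}$ as $x\to+\infty$, the discarded tail of~\eqref{cycling08A} has mass $\Order{\e^{-2m\lambda_+T_+}}$, and this is exactly what shifts the effective success probability to $p_{m,\sigma}$ of the form $\e^{-I_m/\sigma^2}$ with $I_m=I_\infty+\Order{\e^{-2m\lambda_+T_+}}$, and what prevents the $\sigma\to0$ limit (taken with $m$ fixed) from being the exact Gumbel; this last discrepancy disappears only as $m\to\infty$.

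Finally one takes the iterated limit: for each fixed $m$ (hence with $\delta=\delta_m$, $\Delta=\Delta_m$, $\delta_m,\Delta_m\to0$) let $\sigma\to0$. The error $\Order{\e^{-c\ph/\abs{\log\sigma}}}$ is harmless because, by~\eqref{rpm07}, the portion of the law of $\ph_{\tau_0}$ on which $\ph_{\tau_0}/\abs{\log\sigma}$ stays bounded carries total mass $\Order{(1-\lambda_0(\sigma))\abs{\log\sigma}}=o(1)$, while on the complementary event the error tends to zero uniformly; the recentred modulated geometric factor converges (its ratio tends to $1$, the recentring keeping the relevant scale of order one); and $\Law(\theta(\ph_{\tau_0})-\abs{\log\sigma}-\lambda_+T_+Y_m^\sigma)$ converges to a law equal to $\Law((Z-\log2)/2)$ up to $\Order{\delta_m\abs{\log\delta_m}}+\Order{\Delta_m^\beta}+\Order{\e^{-2m\lambda_+T_+}}$. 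Letting $m\to\infty$ removes all three errors and gives~\eqref{cycling11}, and along the way one verifies that $Y_m^\sigma$ satisfies~\eqref{cycling10} with $p=p_{m,\sigma}$ and that the limiting $Z$ is independent of the $Y_m^\sigma$. I expect the main obstacle to be making this unfolding rigorous as a genuine convergence in distribution: one must construct the joint law of $(\ph_{\tau_0},Y_m^\sigma)$ --- with the auxiliary randomisation performing the branch selection and the $\sigma$- and $\delta_m$-dependent recentring absorbing both the $\abs{\log\sigma}$-shift and the cycling oscillation --- in such a way that $Y_m^\sigma$ is honestly asymptotically geometric with success probability $p_{m,\sigma}$ and asymptotically independent of the Gumbel limit, and one must control the interplay of the parameters $\delta_m$, $\Delta_m$, $m$ and the recentring uniformly enough to justify the limit; by comparison, the analytic estimates on $A$, $Q_{\lambda_+T_+}$ and $\lambda_0(\sigma)$ are routine.
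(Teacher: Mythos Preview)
Your approach is genuinely different from the paper's, and the difference matters. You try to derive everything from the statement of Theorem~\ref{thm_periodic2} by a purely distributional ``unfolding'' of the periodized Gumbel $Q_{\lambda_+T_+}=\sum_n A(\lambda_+T_+(n-\cdot))$: you propose to introduce an auxiliary branch index $n$, chosen by extra randomisation conditional on $\ph_{\tau_0}$, and to manufacture $Y_m^\sigma$ from $\lfloor\ph_{\tau_0}\rfloor$ together with this index. The paper does not do this. Instead it defines $Y_m^\sigma$ \emph{dynamically}, as a functional of the sample path at an intermediate first-passage time: one fixes a sequence $\delta_m\simeq\e^{-m\lambda_+T_+}$ of levels, sets $\hat\ph_m=\ph_{\tau_{-\delta_m}}-|\log\delta_m|/(\lambda_+T_+)$, and takes $Y_m^\sigma=\lfloor\hat\ph_m+\tfrac12\rfloor$. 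The decomposition~\eqref{cycling11} then comes from the strong Markov property at $\tau_{-\delta_m}$: (i) a direct computation of the characteristic function of $\theta_{\delta_m}(\ph_{\tau_0})-|\log\sigma|$ for the process started at level $-\delta_m$ and conditioned on $\tau_0<\tau_{-2\delta_m}$, converging to that of $(Z-\log2)/2$ up to $\Order{\delta_m}$; (ii) a large-deviation analysis of the rate function $I_m(\ph)$ for first hitting level $-\delta_m$, which is where the formula $I_m=I_\infty+\Order{\e^{-2m\lambda_+T_+}}$ actually comes from; and (iii) a concentration statement showing $\hat\ph_m-Y_m^\sigma\to0$ in probability, so that the fractional part of $\hat\ph_m$ carries no information in the limit. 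No auxiliary randomisation, no branch selection, no unfolding of $Q_{\lambda_+T_+}$ is needed: the Gumbel appears directly in step (i), because one is now looking at a single copy of the escape problem near the unstable orbit rather than at the full periodised picture.

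Your route could perhaps be pushed through, but there is a concrete gap in the reasoning. You claim that truncating the branch window to radius $\sim m$ ``shifts the effective success probability'' to $\e^{-I_m/\sigma^2}$ with $I_m=I_\infty+\Order{\e^{-2m\lambda_+T_+}}$. This is not right: in your construction $Y_m^\sigma$ is $\lfloor\ph_{\tau_0}\rfloor$ plus a \emph{bounded} perturbation (the recentring and the branch index), and adding a bounded variable to an asymptotically geometric one does not change its tail ratio $\lim_n\pcond{Y=n+1}{Y>n}$. So your $Y_m^\sigma$ would have success probability exactly $1-\lambda_0(\sigma)$, independent of $m$; the $\Order{\e^{-2m\lambda_+T_+}}$ correction to $I_m$ has a different origin, namely the genuine large-deviation cost of reaching level $-\delta_m$ versus reaching $r=0$, which you never compute. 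More broadly, what your proposal buys is that it stays at the level of Theorem~\ref{thm_periodic2} and avoids going back to the estimates of~\cite{BG_periodic2}; what it costs is the artificial auxiliary randomisation and the unresolved ``main obstacle'' you flag yourself. The paper's path-based construction is both simpler and more informative: $Y_m^\sigma$ has a transparent meaning (the winding number at the moment the orbit is first approached to within $\delta_m$), and the three ingredients (i)--(iii) are each short and self-contained.
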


This theorem is almost a corollary of Theorem~\ref{thm_periodic2}, but a little
work is required to control the limit $m\to\infty$, which corresponds to the
limit $\delta\to0$. We give the details in Appendix~\ref{sec_proof_Gumbel}. 

The interpretation of~\eqref{cycling11} is as follows. To reach the unstable
orbit at $\ph_{\tau_0}$, the system will track, with high probability, a
translate $\gamma_\infty(\cdot+n)$ of the optimal path $\gamma_\infty$
(\figref{fig_random_poincare}). The random variable $Y_m^\sigma$ is the index
$n$ of the chosen translate. This index follows an approximately geometric
distribution of parameter $1-\lambda_0(\sigma) \simeq \e^{-I_\infty/\sigma^2}$,
which also manifests itself in the factor $(1-\lambda_0)\lambda_0^t$
in~\eqref{cycling08}. The distribution of $\ph_{\tau_0}$ conditional on the
event $\set{Y_m^\sigma=n}$ converges to a shifted Gumbel distribution --- we
will come back to this point in Section~\ref{sec_Gumbel}. 

We may not be interested in the integer part of the crossing phase
$\ph_{\tau_0}$,
which counts the number of rotations around the orbit, but only in its
fractional part $\hat\ph_{\tau_0}=\ph_{\tau_0}\pmod{1}$. Then it follows
immediately
from~\eqref{cycling11} and the fact that $Y_m^\sigma$ is integer-valued that 
\begin{equation}
 \label{cycling12}
 \lim_{\sigma\to0} \Law \bigl( \theta(\hat\ph_{\tau_0}) - \abs{\log\sigma}
\bigr)
 = \Law \biggl( 
 \biggl[\frac{Z}{2} - \frac{\log2}{2}\biggr] \pmod{\lambda_+T_+}\biggr) \;.
\end{equation} 
The random variable on the right-hand side has a density given
by~\eqref{cycling08A}. This result can also be derived directly
from~\cite[Corollary~2.5]{BG_periodic2}, by the same procedure as the one used
in Section~\ref{ssec_pG_u}. 

\begin{remark} \hfill
\begin{enum}
\item 	The index $m$ in~\eqref{cycling11} seems artificial, and one would
like to have a similar result for the law of
$\theta(\ph_{\tau_0})-\abs{\log\sigma}-\lambda_+T_+Y_\infty^\sigma$.
Unfortunately,
the convergence as $\sigma\to0$ is not uniform in $m$, so that the two limits
in~\eqref{cycling11} have to be taken in that particular order.

\item 	The speed of convergence in~\eqref{cycling10} depends on the spectral
gap of the Markov chain. In~\cite[Theorem~6.14]{BG_periodic2}, we proved that
this spectral gap is bounded by $\e^{-c/\abs{\log\sigma}}$ for some constant
$c>0$, though we expect that the gap can be bounded uniformly in $\sigma$. We
expect, but have not proved, that the constant $c$ is uniform in $m$ (i.e.\
uniform in the parameter $\delta$). 
\end{enum}
\end{remark}


\subsection{The origin of oscillations}
\label{ssec_osc}

A striking aspect of the expression~\eqref{cycling08} for the distribution of
$\ph_{\tau_0}$ is that it depends periodically on $\abs{\log\sigma}$. This means
that as $\sigma\to0$, the distribution does not converge, but is endlessly
shifted around the unstable orbit proportionally to $\abs{\log\sigma}$. This
phenomenon has been discovered by Martin Day, who called it \emph{cycling}
\cite{Day7,Day3,Day6,Day4}. See
also~\cite{MS4,BG7,Getfert_Reimann_2009,Getfert_Reimann_2010} for related work. 

The intuitive explanation of cycling is as follows. We have seen that the
large-deviation rate function is minimized by a path $\gamma_\infty$ (and its
translates). The path approaches the unstable orbit as $\ph\to\infty$, and the
stable one as $\ph\to-\infty$. The distance between $\gamma_\infty$ and the
unstable orbit satisfies 
\begin{equation}
 \label{osc01}
 |r(\ph)| \simeq c\e^{-\theta(\ph)}
 \qquad
 \text{as } \ph\to\infty\;.
\end{equation} 
This implies 
\begin{equation}
 \label{osc02}
 |r(\ph)| = \sigma 
 \quad \Leftrightarrow \quad
 \theta(\ph) \simeq |\log\sigma| + \log c\;.
\end{equation} 
Thus everything behaves as if the unstable orbit has an \lq\lq effective
thickness\rq\rq\ equal to the standard deviation $\sigma$ of the noise. Escape
becomes likely once the optimal path $\gamma_\infty$ touches the thickened
unstable orbit.

It is interesting to note that the periodic dependence on the logarithm of a
parameter, or \emph{log-periodic oscillations}, appear in many systems
presenting discrete-scale invariance~\cite{Sornette_98}. 
These include for instance 
hierarchical models in
statistical physics \cite{Derrida_Itzykson_Luck_84,Costin_Giacomin_2013}
and self-similar networks \cite{Doucot_etal_PRL_86},
diffusion through fractals \cite{Akkermans_etal_EPL_2009,Dunne_JPA_2012}
and iterated maps \cite{deMoura_etal_PRE_2000,Derrida_Giacomin_2014}. 
One link with the present situation is that~\eqref{osc01} implies a
discrete-scale invariance, since scaling $r$ by a factor 
$\e^{-\theta(1)}=\e^{-\lambda_+T_+}$ is equivalent to scaling the noise
intensity by the same factor. There might be deeper connections due to the fact
that certain key functions, as the Gumbel distribution in our case, obey
functional equations --- see for instance the similar behaviour
of the example in~\cite[Remark~3.1]{Derrida_Giacomin_2014}. 

\begin{remark}
\label{rem_osc} 
The periodic \lq\lq cycling profile\rq\rq\ $Q_{\lambda_+T_+}$ admits the
Fourier series representation 
\begin{equation}
 \label{osc03}
 Q_{\lambda_+T_+}(x) = \sum_{k\in\Z} a_k \e^{2\pi\icx k x}\;, 
 \qquad
 a_k = \frac{2^{-\pi\icx k/(\lambda_+T_+)}}{\lambda_+T_+} 
 \Gamma \biggl( 1 - \frac{\pi\icx k}{\lambda_+T_+}\biggr)\;,
\end{equation} 
where $\Gamma$ is Euler's Gamma function. $Q_{\lambda_+T_+}$ is also an
\emph{elliptic function}, since in addition to being periodic in the real
direction, it is also periodic in the imaginary direction. Indeed, by definition
of $A(x)$, we have 
\begin{equation}
 \label{osc04}
 Q_{\lambda_+T_+} \biggl( z + \frac{\pi\icx}{\lambda_+T_+}\biggr)
 = Q_{\lambda_+T_+}(z) 
 \qquad \forall z\in\C\;.
\end{equation} 
Being non-constant and doubly periodic, $Q_{\lambda_+T_+}$ necessarily admits
at least two poles in every translate of the unit cell 
$(0,1)\times(0,\pi\icx/(\lambda_+T_+))$.  
\end{remark}


\section{The Gumbel distribution}
\label{sec_Gumbel}


\subsection{Extreme-value theory}
\label{ssec_evt}

Let $X_1,X_2,\dots$ be a sequence of independent, identically
distributed (i.i.d.) real random variables, with common distribution function 
$F(x)=\prob{X_1\leqs x}$. Extreme-value theory is concerned with deriving the
law of the maximum 
\begin{equation}
 \label{evt01}
 M_n = \max\set{X_1,\dots,X_n}
\end{equation} 
as $n\to\infty$. It is immediate to see that the distribution function of $M_n$
is $F(x)^n$. We will say that $F$ belongs to the domain of attraction of a
distribution function $\Phi$, and write $F\in D(\Phi)$, if there exist
sequences of real numbers $a_n>0$ and $b_n$ such that
\begin{equation}
 \label{evt02}
 \lim_{n\to\infty} F(a_nx+b_n)^n = \Phi(x)
 \qquad \forall x\in\R\;.
\end{equation} 
This is equivalent to the sequence of random variables $(M_n-b_n)/a_n$
converging in distribution to a random variable with distribution function 
$\Phi$. Clearly, if $F\in D(\Phi)$, then one also has $F\in D(\Phi(ax+b))$
for all $a>0,b\in\R$, so it makes sense to work with equivalence classes
$\set{\Phi(ax+b)}_{a,b}$. 

Any possible limit of~\eqref{evt02} should satisfy the functional
equation 
\begin{equation}
 \label{evt03}
 \Phi(ax+b)^2 = \Phi(x)
 \qquad \forall x\in\R
\end{equation} 
for some constants $a>0, b\in\R$.
Fr\'echet~\cite{Frechet_1927}, Fischer and Tippett~\cite{Fisher_Tippett_1928}
and Gnedenko~\cite{Gnedenko_1943} have shown that if one excludes the degenerate
case $F(x)=1_{\set{x\geqs c}}$, then the only possible solutions
of~\eqref{evt03} are in one of the following three classes, where $\alpha>0$ is
a parameter:
\begin{alignat}{3}
\nonumber
\Phi_\alpha(x) &= \e^{-x^{-\alpha}} 1_{\set{x>0}} 
& \qquad &\text{Fr\'echet law\;,} \\
\nonumber
\Psi_\alpha(x) &= \e^{-(-x)^{\alpha}} 1_{\set{x\leqs 0}} + 1_{\set{x>0}}
& &\text{(reversed) Weibull law\;,} \\
\Lambda(x) &= \e^{-\e^{-x}}
& &\text{Gumbel law\;.}
 \label{evt04}
\end{alignat}
 
In~\cite{Gnedenko_1943}, Gnedenko gives precise characterizations on when $F$
belongs to the domain of attraction of each of the above laws. Of particular
interest to us is the following result. Let 
\begin{equation}
 \label{evt05}
 R(x) = 1-F(x) = \prob{X_1 > x}
\end{equation} 
denote the tail probabilities of the i.i.d.\ random variables $X_i$. 

\begin{lemma}[{\cite[Lemma~4]{Gnedenko_1943}}]
\label{lem_Gendenko_4}
A nondegenerate distribution function $F$ belongs to the domain
of attraction of $\Phi$ if and only if there exist sequences $a_n>0$ and $b_n$
such that 
\begin{equation}
 \label{evt06}
 \lim_{n\to\infty} n R(a_n x + b_n) = -\log\Phi(x)
 \qquad
 \forall x \text{ such that } \Phi(x)>0\;.
\end{equation} 
\end{lemma}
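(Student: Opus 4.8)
The plan is to reduce~\eqref{evt06} to an elementary fact about sequences in $[0,1]$ and then apply it pointwise in $x$. The starting point is the identity $F(a_nx+b_n)^n = \bigl(1-R(a_nx+b_n)\bigr)^n$, valid with $R$ as in~\eqref{evt05} for any sequences $a_n>0$, $b_n$: it shows that the convergence~\eqref{evt02} and the convergence~\eqref{evt06} concern one and the same sequence, namely $u_n := R(a_nx+b_n)\in[0,1]$, and that the two statements amount to a comparison between $(1-u_n)^n$ and $nu_n$.

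First I would prove the following: for every sequence $(u_n)$ in $[0,1]$ and every $\tau\in[0,\infty]$, one has $(1-u_n)^n\to\e^{-\tau}$ if and only if $nu_n\to\tau$. For $\tau<\infty$: if $nu_n\to\tau$ then $u_n\to0$, hence $\log(1-u_n)=-u_n(1+o(1))$ and so $n\log(1-u_n)\to-\tau$; conversely, if $(1-u_n)^n\to\e^{-\tau}>0$ then necessarily $u_n\to0$ --- otherwise a subsequence bounded away from $0$ would force $(1-u_n)^n\to0$ --- and then $n\log(1-u_n)=-nu_n(1+o(1))\to-\tau$ forces $nu_n\to\tau$. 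For $\tau=\infty$: from $1-u_n\leqs\e^{-u_n}$ one gets $(1-u_n)^n\leqs\e^{-nu_n}$, which tends to $0$ when $nu_n\to\infty$; and if $nu_n\not\to\infty$, a subsequence with $nu_n\leqs M$ gives $(1-u_n)^n\geqs(1-M/n)^n\to\e^{-M}>0$, so $(1-u_n)^n\not\to0$. This settles the elementary claim.

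Applying it with $u_n=R(a_nx+b_n)$ and, for a fixed $x$ with $\Phi(x)>0$, with $\tau=-\log\Phi(x)\in[0,\infty)$, gives both directions at once. If $F\in D(\Phi)$, take sequences $a_n,b_n$ witnessing~\eqref{evt02}: then $(1-u_n)^n=F(a_nx+b_n)^n\to\Phi(x)=\e^{-\tau}$, so $nR(a_nx+b_n)\to-\log\Phi(x)$, which is~\eqref{evt06}. Conversely, if~\eqref{evt06} holds for some $a_n,b_n$, then for $x$ with $\Phi(x)>0$ we have $nu_n\to\tau<\infty$, hence $F(a_nx+b_n)^n=(1-u_n)^n\to\e^{-\tau}=\Phi(x)$.

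The one point requiring slightly more care --- and the only real obstacle --- is that~\eqref{evt02} demands $F(a_nx+b_n)^n\to\Phi(x)$ for \emph{all} $x\in\R$, including those with $\Phi(x)=0$, whereas~\eqref{evt06} constrains only the points where $\Phi(x)>0$. For such an $x$ I would argue from monotonicity: since $R$ is nonincreasing and $a_n>0$, $nR(a_nx+b_n)\geqs nR(a_nx'+b_n)$ for every $x'>x$, hence $\liminf_{n\to\infty}nR(a_nx+b_n)\geqs-\log\Phi(x')$ whenever $\Phi(x')>0$. Since the candidate limit laws~\eqref{evt04} are continuous, $\Phi(x')\to0$ as $x'$ decreases to the left endpoint of the support of $\Phi$, so the lower bound tends to $+\infty$ and $nR(a_nx+b_n)\to\infty$; the case $\tau=\infty$ of the elementary claim then gives $F(a_nx+b_n)^n\to0=\Phi(x)$. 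Because the sequences $a_n,b_n$ carry over unchanged between the two formulations in both directions, this finishes the proof.
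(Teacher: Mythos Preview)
The paper does not prove this lemma; it is quoted as a classical result from Gnedenko~\cite{Gnedenko_1943}, so there is no in-paper argument to compare your proposal against. Your proof is correct and is essentially the standard one: the pointwise equivalence $(1-u_n)^n\to\e^{-\tau}\Leftrightarrow nu_n\to\tau$ for sequences $u_n\in[0,1]$, applied with $u_n=R(a_nx+b_n)$, handles all $x$ with $\Phi(x)>0$ in both directions at once. Your treatment of the boundary case $\Phi(x)=0$ via monotonicity of $R$ and continuity of the max-stable laws~\eqref{evt04} at the left endpoint of their support is appropriate in the paper's setting, where $\Phi$ is already known to be one of those three types; just be aware that this is the one place where the specific form of $\Phi$ enters, and that a fully general version of the lemma (for an arbitrary target $\Phi$) would instead interpret $F\in D(\Phi)$ as weak convergence, i.e.\ convergence at continuity points only, which sidesteps the issue.
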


The sequences $a_n$ and $b_n$ are not unique, but
\cite[Theorem~6]{Gnedenko_1943} shows that in the case of the Gumbel
distribution $\Phi=\Lambda$, 
\begin{equation}
 \label{evt07}
 b_n = \inf\biggsetsuch{x}{F(x) > 1-\frac1n} \;, 
 \qquad
 a_n = \inf\biggsetsuch{x}{F(x) + b_n > 1-\frac1{n\e}}
\end{equation} 
is a possible choice. In this way it is easy to check that the normal law is
attracted to the Gumbel distribution.

Another related characterization of $F$ being in the domain of attraction of the
Gumbel law is the following.

\begin{theorem}[{\cite[Theorem~7]{Gnedenko_1943}}]
Let $x_0=\inf\setsuch{x}{F(x)=1}\in\R\cup\set{\infty}$. Then $F\in D(\Lambda)$
if and only if there exists a continuous function $A(z)$ such that 
$\lim_{z\to x_0-}A(z)=0$ and 
\begin{equation}
 \label{evt08}
 \lim_{z\to x_0-} \frac{R(z(1+A(z)x))}{R(z)} 
 = -\log\Lambda(x) = \e^{-x}
 \qquad \forall x\in\R\;.
\end{equation} 
The function $A(z)$ can be chosen such that $A(b_n)=a_n/b_n$ for all $n$, where
$a_n$ and $b_n$ satisfy~\eqref{evt06}.
\end{theorem}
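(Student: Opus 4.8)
The plan is to reduce both implications to the tail criterion of Lemma~\ref{lem_Gendenko_4}: taking $\Phi=\Lambda$ there, $F\in D(\Lambda)$ holds precisely when there are $a_n>0$ and $b_n$ with $nR(a_nx+b_n)\to\e^{-x}$ for every $x$, i.e.\ when~\eqref{evt06} holds. The bridge to the continuous condition~\eqref{evt08} is the elementary identity $b_n(1+A(b_n)x)=b_n+a_nx$, valid as soon as $A(b_n)=a_n/b_n$: it lets one read the quotient $R(z(1+A(z)x))/R(z)$ at the sampling points $z=b_n$ as $R(b_n+a_nx)/R(b_n)$, which~\eqref{evt06}--\eqref{evt07} keep under control. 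Throughout I use the explicit sequence $b_n$ from~\eqref{evt07}, so that $b_n$ is nondecreasing, $b_n<x_0$ and $b_n\to x_0$; I carry out the argument for the representative endpoint $x_0=+\infty$, the case of finite positive $x_0$ being identical and $x_0\leqs0$ requiring only a sign adjustment of $A$ ensuring $z(1+A(z)x)<z$ exactly when $A(z)x<0$.

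\emph{Sufficiency.} Suppose $A$ is continuous, $A(z)\to0$ as $z\to x_0-$, and~\eqref{evt08} holds. Evaluating~\eqref{evt08} at a fixed $x>0$ forces $A(z)>0$ for $z$ near $x_0$, so $a_n:=b_nA(b_n)>0$ for large $n$. Writing $nR(a_nx+b_n)=\bigl[nR(b_n)\bigr]\bigl[R(b_n(1+A(b_n)x))/R(b_n)\bigr]$, the second bracket tends to $\e^{-x}$ by~\eqref{evt08} applied along $z=b_n$, and the first tends to $1$: the quantile property of $b_n$ gives $R(b_n)\leqs1/n\leqs R(b_n-)$, while~\eqref{evt08} rules out a non-negligible atom of $R$ at $b_n$ (if $R(b_n-)\geqs(1+\eta)R(b_n)$ along a subsequence then $R(b_n(1+A(b_n)x))/R(b_n)\geqs1+\eta$ for all $x<0$, contradicting~\eqref{evt08} for $x$ close to $0$). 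Hence~\eqref{evt06} holds, Lemma~\ref{lem_Gendenko_4} yields $F\in D(\Lambda)$, and $A(b_n)=a_n/b_n$ by construction.

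\emph{Necessity.} Conversely, Lemma~\ref{lem_Gendenko_4} supplies $a_n>0$ and $b_n$ satisfying~\eqref{evt06}; using~\eqref{evt07} we may take $b_n$ nondecreasing with $b_n\to x_0$, and~\eqref{evt06} at $x=0$ gives $nR(b_n)\to1$. The crucial preliminary step is $a_n=\order{b_n}$ --- equivalently, rapid variation of the tail, $R(tz)/R(z)\to0$ as $z\to x_0-$ for every $t>1$. This is the property distinguishing $D(\Lambda)$ from the Fr\'echet class $\Phi_\alpha$ in~\eqref{evt04} (where $a_n\asymp b_n$); I would derive it from the theory of $\Gamma$-variation, or directly by comparing the integrated tail $\int^{x_0}R(u)\6u$ with $a_nR(b_n)$ and invoking $nR(b_n)\to1$ and $nR(b_n+a_n)\to\e^{-1}$. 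Granting $a_n/b_n\to0$, define $A$ on $[b_1,x_0)$ by a monotone interpolation of the nodal values $a_n/b_n$ placed at $z=b_n$, arranged so that $z\mapsto z(1+A(z)x)$ is monotone on each $[b_n,b_{n+1}]$ for every fixed sign of $x$. Then $A$ is continuous, $A(b_n)=a_n/b_n$, and $A(z)\to0$ because $\sup_{z\geqs b_n}A(z)\leqs\max_{k\geqs n}a_k/b_k\to0$. Finally~\eqref{evt08} follows from the monotonicity of $R$: for $z\in[b_n,b_{n+1})$ and $x>0$ one sandwiches $R(z(1+A(z)x))/R(z)$ between $R(b_{n+1}(1+A(b_{n+1})x))/R(b_n)$ and $R(b_n(1+A(b_n)x))/R(b_{n+1})$, and both bounds tend to $\e^{-x}$ because $R(b_k(1+A(b_k)x))=R(b_k+a_kx)\sim\e^{-x}/k$ and $R(b_k)\sim1/k$; the case $x<0$ is symmetric.

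I expect the main obstacle to be in the necessity direction: first the rapid-variation estimate $a_n=\order{b_n}$, and then the passage from control at the discrete nodes $(b_n)$ to control over the continuous variable $z$ strong enough to give~\eqref{evt08} for each fixed $x$. The remaining points --- removing atoms of $F$ near $x_0$ by showing $F$ is continuous there, the quantile bookkeeping behind~\eqref{evt07}, and, for finite $x_0$, reading $z(1+A(z)x)$ as an additive perturbation of size $\approx x_0A(z)x$ of the endpoint --- are routine but need some care.
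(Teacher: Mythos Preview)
The paper does not prove this theorem: it is quoted verbatim as \cite[Theorem~7]{Gnedenko_1943} in Section~\ref{ssec_evt} with no accompanying proof, serving only as background on extreme-value theory before the discussion of residual lifetimes. There is therefore no proof in the paper to compare your attempt against.

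As for your sketch on its own terms: the overall strategy --- reducing both directions to Lemma~\ref{lem_Gendenko_4} via the identity $b_n(1+A(b_n)x)=b_n+a_nx$ --- is the right one and is essentially how Gnedenko proceeds. But you have correctly identified, and then not filled, the two substantive gaps. First, the rapid-variation statement $a_n=\order{b_n}$ is the heart of the necessity direction; saying you ``would derive it from the theory of $\Gamma$-variation'' is circular, since $\Gamma$-variation is precisely the class characterised by~\eqref{evt08}, and your alternative sketch via the integrated tail is too vague to be a proof. Second, the discrete-to-continuous interpolation needs more than ``monotone interpolation arranged so that $z\mapsto z(1+A(z)x)$ is monotone'': you must actually show such an interpolation exists and that the sandwiching bounds you write down are valid for \emph{every} $z\in[b_n,b_{n+1})$, which requires controlling how far $b_{n+1}$ can exceed $b_n$ relative to $a_n$ --- again a consequence of the rapid variation you have not established. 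The sufficiency direction is in better shape, though the argument ruling out atoms at $b_n$ is slightly off: you need $R(b_n(1+A(b_n)x))\geqs R(b_n-)$ for $x<0$, which follows once $A(b_n)>0$, but you only argued $A(z)>0$ \emph{eventually}, not at each $b_n$.
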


The quantity on the left-hand side of~\eqref{evt08} can be rewritten as 
\begin{equation}
 \label{evt09}
 \bigpcond{X_1 > z(1 + A(z)x)}{X_1>z}\;,
\end{equation} 
that is, it represents a \emph{residual lifetime}. See
also~\cite{Balkema_deHaan_74}. 


\subsection{Length of reactive paths}
\label{ssec_lrp}

The Gumbel distribution also appears in the context of somewhat different exit
problems (which, however, will turn out not to be so different after all).
In~\cite{CerouGuyaderLelievreMalrieu12}, C\'erou, Guyader, Leli\`evre
and Malrieu consider one-dimensional SDEs of the form  
\begin{equation}
\label{lrp01} 
 \6x_t = -V'(x_t)\6t + \sigma\6W_t\;,
\end{equation} 
where $V(x)$ is a double-well potential (\figref{fig_double_well}). 

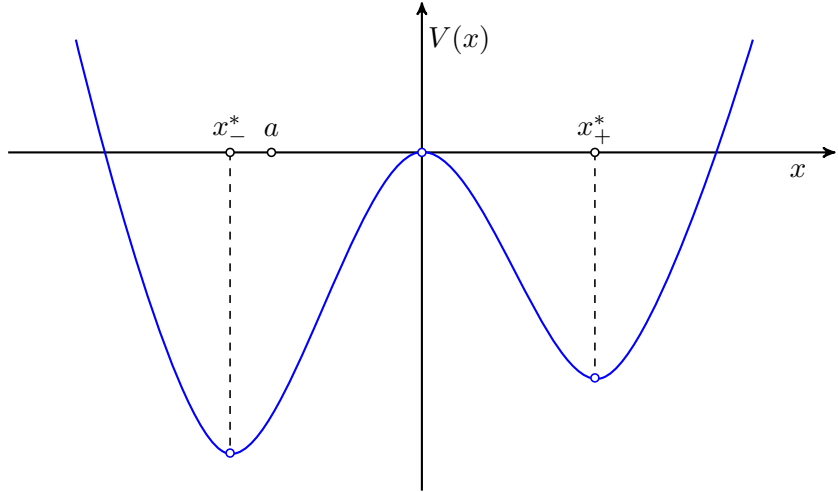
\begin{figure}
\begin{center}
\begin{tikzpicture}[>=stealth',main node/.style={draw,circle,fill=white,minimum
size=3pt,inner sep=0pt}]



\draw[->,thick] (-5.5,0) -> (5.5,0);
\draw[->,thick] (0,-4.5) -> (0,2.0);


\draw[dashed,semithick] (-2.55,0) -- (-2.55,-4);
\draw[dashed,semithick] (2.3,0) -- (2.3,-3);


\draw[blue,thick] plot[smooth,tension=.6]
  coordinates{(-4.6,1.5) (-2.6,-4) (-0.05,0) (2.4,-3) (4.4,1.5)};


\node[main node,blue,fill=white,semithick] at (0,0) {}; 
\node[main node,semithick] at (2.3,0) {}; 
\node[main node,blue,fill=white,semithick] at (2.3,-3) {}; 
\node[main node,semithick] at (-2.55,0) {}; 
\node[main node,blue,fill=white,semithick] at (-2.55,-4) {}; 
\node[main node,semithick] at (-2.0,0) {}; 

\node[] at (-2.55,0.3) {$x^*_-$};
\node[] at (2.3,0.3) {$x^*_+$};
\node[] at (-2,0.3) {$a$};

\node[] at (5.0,-0.25) {$x$};
\node[] at (0.5,1.5) {$V(x)$};

\end{tikzpicture}
\end{center}
\caption[]{An example of double-well potential occurring in~\eqref{lrp01}.
}
\label{fig_double_well}
\end{figure}

Assume, without loss of generality, that the local maximum of $V$ is in $0$.
Denote the local minima of $V$ by $x^*_- < 0 < x^*_+$, and assume
$\lambda=-V''(0)>0$. Pick an initial condition $x_0\in(x^*_-,0)$. A classical
question is to determine the law of the first-hitting time $\tau_b$ of a point
$b\in(0,x^*_+]$. The expected value of $\tau_b$ obeys the so-called
\emph{Eyring--Kramers law}~\cite{Arrhenius,Eyring,Kramers}
\begin{equation}
 \label{lrp02}
 \expecin{x_0}{\tau_b} = 
\frac{2\pi}{\sqrt{V''(x^\star_-)\abs{V''(0)}}}
\e^{2[V(0)-V(x^\star_-)]/\sigma^2} \bigl[ 1 + \Order{\sigma} \bigr]\;.
\end{equation} 
In addition, Day~\cite{Day1} has proved (in a more general context) that the
distribution of $\tau_b$ is asymptotically exponential:
\begin{equation}
 \label{lrp03}
\lim_{\sigma\to0} \bigprobin{x_0}{\tau_b > s \, \expecin{x_0}{\tau_b}} = \e^{-s}
\;.
\end{equation} 
The picture is that sample paths spend an exponentially long time near the local
minimum $x^*_-$, with occasional excursions away from $x^*_-$, until ultimately
managing to cross the saddle. See for instance~\cite{Berglund_irs_MPRF} for a
recent review. 

In transition-path theory~\cite{E_VandenEijnden_06,Vanden_Eijnden_LNP06}, by
contrast, one is interested in the very last bit of
the sample path, between its last visit to $x^*_-$ and its first passage in
$b$. The length of this transition is considerably shorter than $\tau_b$. A way
to formulate this is to fix a point $a\in(x^*_-,x_0)$, and to condition on the
event that the path hits $b$ before hitting $a$. The result can be formulated
as follows (note that our $\sigma$ corresponds to $\sqrt{2\eps}$
in~\cite{CerouGuyaderLelievreMalrieu12}):

\begin{theorem}[{\cite[Theorem~1.4]{CerouGuyaderLelievreMalrieu12}}]
\label{thm_CGLM} 
For any fixed $a < x_0 < 0 < b$ in $(x^*_- ,  x^*_+)$,
\begin{equation}
 \label{lrp04}
  \lim_{\sigma\to0} \Law \bigl( \lambda \tau_b - 2 \abs{\log\sigma} 
  \bigm| \tau_b < \tau_a \bigr)
 = \Law \Bigl( Z + T(x_0,b) \Bigr) \;,
\end{equation} 
where $Z$ is a standard Gumbel variable, and 
\begin{equation}
 \label{lrp05}
 T(x_0,b) = \log \bigl(\abs{x_0}b\lambda \bigr) 
 + \int_{x_0}^0 \biggl( \frac{\lambda}{V'(y)} + \frac{1}{y}\biggr) \6y
 - \int_0^b \biggl( \frac{\lambda}{V'(y)} + \frac{1}{y}\biggr) \6y\;.
\end{equation} 
\end{theorem}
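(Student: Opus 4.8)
\emph{Sketch of proof.} The plan is to remove the conditioning by a Doob transform, then split the transition time into a deterministic uphill run, a stochastic crossing confined to an $\Order{\sigma}$-window around the saddle, and a deterministic downhill run, and to extract the Gumbel fluctuation from the crossing phase.

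First I would handle the conditioning. For the one-dimensional diffusion~\eqref{lrp01} the function $h(x)=\probin{x}{\tau_b<\tau_a}$ is explicit, $h(x)=\frac{\int_a^x\e^{2V(y)/\sigma^2}\6y}{\int_a^b\e^{2V(y)/\sigma^2}\6y}$, and by Doob's transform the law of the path conditioned on $\set{\tau_b<\tau_a}$ is that of the diffusion $\6\tx_t=\bigl(-V'(\tx_t)+\sigma^2h'(\tx_t)/h(\tx_t)\bigr)\6t+\sigma\6W_t$ started at $x_0$ and stopped at $b$. A Laplace estimate of the integrals defining $h$ gives, for $x$ at a fixed distance to the left of the saddle, $\sigma^2h'(x)/h(x)=2V'(x)+\Order{\sigma^2}$, so that the conditioned drift there is $-V'(x)+2V'(x)+\Order{\sigma^2}=V'(x)+\Order{\sigma^2}$: the \emph{time-reversed} deterministic flow, pushing $\tx$ \emph{up} towards $0$. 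For $x$ at a fixed distance to the right of the saddle, $\sigma^2h'/h$ is exponentially small and the conditioned drift is $-V'(x)+\order{1}$, the ordinary flow sliding down to $b$. Only in an $\Order{\sigma}$-neighbourhood of $0$ do the two terms genuinely compete.

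Next, fix a small $\delta>0$ and write $\tau_b=\tau_{-\delta}+(\tau_\delta-\tau_{-\delta})+(\tau_b-\tau_\delta)$ for the successive passage times of $\tx$ through $-\delta$ and $\delta$. On the two outer intervals the noise is negligible on the $\Order{1}$ time scales involved, so in probability $\lambda\tau_{-\delta}\to\lambda\int_{x_0}^{-\delta}\6y/V'(y)$ and $\lambda(\tau_b-\tau_\delta)\to\lambda\int_\delta^b\6y/(-V'(y))$; using $V'(y)=-\lambda y+\Order{y^2}$ near $0$ these equal $-\log\delta+\log\abs{x_0}+\int_{x_0}^0\bigl(\lambda/V'(y)+1/y\bigr)\6y+\order{1}$ and $-\log\delta+\log b-\int_0^b\bigl(\lambda/V'(y)+1/y\bigr)\6y+\order{1}$, the subtracted $1/y$ absorbing the logarithmic divergence as $\delta\to0$. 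For the middle interval I would replace, inside the window, the drift by its linearisation, reducing to the linear equation $\6y_t=\lambda y_t\6t+\sigma\6W_t$ conditioned to leave $[-\delta,\delta]$ through $+\delta$, started essentially at $-\delta$. Writing $y_t=\e^{\lambda t}\bigl(-\delta+\sigma\int_0^t\e^{-\lambda s}\6W_s\bigr)$ and $M_\infty=\sigma\int_0^\infty\e^{-\lambda s}\6W_s\sim\cN(0,\sigma^2/(2\lambda))$, the process exits at a time $t$ with $\e^{\lambda t}(M_\infty-\delta)\approx\delta$, and through $+\delta$ precisely when $M_\infty>\delta$; hence on that event $\lambda(\tau_\delta-\tau_{-\delta})=\log\delta-\log(M_\infty-\delta)+\order{1}$. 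Gaussian tail asymptotics show that $(M_\infty-\delta)$ given $\set{M_\infty>\delta}$, rescaled by the variance over $\delta$, converges to an $\text{Exp}(1)$ variable; since $-\log\text{Exp}(1)$ is exactly a standard Gumbel variable $Z$, this yields $\lambda(\tau_\delta-\tau_{-\delta})-2\abs{\log\sigma}\Rightarrow Z+(\text{an explicit constant in }\delta,\lambda)$. Adding the three contributions, the $\delta$-dependence cancels, and letting $\delta\to0$ (carefully, only after $\sigma\to0$) leaves exactly $Z+T(x_0,b)$ with $T$ as in~\eqref{lrp05}.

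The main obstacle is the middle step: justifying, \emph{after} conditioning on the exponentially unlikely event $\set{\tau_b<\tau_a}$, the replacement of the nonlinear $h$-transformed drift by the linear one uniformly over the crossing interval, whose length grows like $\abs{\log\sigma}$. Conditioning on rare events does not commute with pathwise coupling, so one must work directly with the $h$-transformed generator, controlling the $\Order{x^2}$ error in $-V'$ and the $\Order{\sigma^2}$ error in $\sigma^2h'/h$ through comparison of the corresponding exit problems; it is precisely this non-uniformity that forces the limits to be taken in the order $\sigma\to0$ then $\delta\to0$ --- the same phenomenon encountered in Theorem~\ref{thm_convergence_Gumbel}.
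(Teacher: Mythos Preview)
The paper does not prove this theorem; it is quoted verbatim from~\cite{CerouGuyaderLelievreMalrieu12}, and the only proof information given is the sentence ``The proof is based on Doob's $h$-transform, which allows to replace the conditioned problem by an unconditioned one, with a modified drift term. The new drift term becomes singular as $x\to a_+$.'' Your sketch follows exactly this route: you write down the explicit harmonic function $h$, pass to the $h$-transformed SDE, and observe that away from the saddle the new drift reduces to the time-reversed deterministic flow on the left and the forward flow on the right. The three-phase decomposition and the regularisation of the logarithmic divergence via the integrands $\lambda/V'(y)+1/y$ are precisely what produce the constant $T(x_0,b)$, as the paper itself remarks immediately after the statement. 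So the approach is the same as the one the paper attributes to the original reference.

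One remark worth making: your analysis of the middle (crossing) phase --- writing $y_t=\e^{\lambda t}(-\delta+\sigma\int_0^t\e^{-\lambda s}\6W_s)$, passing to the terminal Gaussian $M_\infty$, and reading off the Gumbel law from the exponential overshoot $(M_\infty-\delta)\mid\{M_\infty>\delta\}$ --- is not the $h$-transform computation of~\cite{CerouGuyaderLelievreMalrieu12} but rather Bakhtin's residual-lifetime argument, which the present paper reviews in Section~\ref{ssec_Bakhtin} and uses in Section~\ref{ssec_tba} to \emph{re}-derive the linear case of Theorem~\ref{thm_CGLM} by combining Theorems~\ref{thm_Bakhtin_tau0} and~\ref{thm_Bakhtin_taub} via Lemma~\ref{lem_Bakhtin}. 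Thus your sketch is really a hybrid of the two proofs discussed in the paper: the $h$-transform for the global conditioning and the outer deterministic phases, and Bakhtin's explicit linear computation for the inner crossing. This is perfectly legitimate, but you should be aware that splitting at $\pm\delta$ and invoking the strong Markov property requires some care with the conditioning (the event $\{\tau_b<\tau_a\}$ does not factor cleanly over the three pieces), which is why the $h$-transform is applied \emph{first} and the splitting done afterwards on the unconditioned transformed process.
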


The proof is based on Doob's $h$-transform, which allows to replace the
conditioned problem by an unconditioned one, with a modified drift term. The
new drift term becomes singular as $x\to a_+$. See also~\cite{Lu_Nolen_2014}
for other uses of Doob's $h$-transform in the context of reactive paths.

As shown in~\cite[Section~4]{CerouGuyaderLelievreMalrieu12},
$2\abs{\log\sigma} + T(x_0,b)/\lambda$
is the sum of the deterministic time needed to go from $\sigma$ to $b$, and 
of the deterministic time needed to go from $-\sigma$ to $a$ (in the
one-dimensional setting, paths minimizing the large-deviation rate function are
time-reversed deterministic paths). 


\subsection{Bakhtin's approach}
\label{ssec_Bakhtin}

Yuri Bakhtin has recently provided some interesting insights into the question
of why the Gumbel distribution governs the length of reactive
paths~\cite{Bakhtin_2013a,Bakhtin_2014a}. They apply to linear equations of the
form 
\begin{equation}
\label{Bakhtin01} 
 \6x_t = \lambda x_t\6t + \sigma\6W_t\;,
\end{equation} 
where $\lambda > 0$. 
However, we will see in Section~\ref{sec_slips} below that they can be 
extended to the nonlinear setting by using the technique outlined in
Appendix~\ref{ssec_pG_u}. 

The solution of~\eqref{Bakhtin01} is an \lq\lq explosive Ornstein--Uhlenbeck
process\rq\rq 
\begin{equation}
 \label{Bakhtin02}
 x_t = \e^{\lambda t} \biggl( x_0 + \sigma \int_0^t \e^{-\lambda
s}\6W_s\biggr)\;,
\end{equation} 
which can also be represented in terms of a time-changed Brownian motion, 
\begin{equation}
 \label{Bakhtin03}
 x_t = \e^{\lambda t} \tilde x_t\;,
 \qquad
 \tilde x_t = x_0 + \widetilde W_{\sigma^2(1-\e^{-2\lambda t})/(2\lambda)}
\end{equation} 
(this follows by evaluating the variance of $\tilde x_t$ using It\^o's
isometry). Thus $\tilde x_t - x_0$ is equal in distribution to 
$\sigma\sqrt{(1-\e^{-2\lambda t})/(2\lambda)}\,N$, where $N$ is a standard
normal random variable. 

Assume $x_0<0$ and denote by $\tau_0$ the first-hitting time of $x=0$. Then
Andr\'e's reflection principle allows to write 
\begin{equation}
\label{Bakhtin04} 
\bigpcond{\tau_0 < t}{\tau_0 < \infty}
= \frac{\prob{\tau_0 < t}}{\prob{\tau_0 < \infty}}
= \frac{2\prob{\tilde x_t>0}}{2\prob{\tilde x_\infty>0}}
= \bigpcond{\tilde x_t>0}{\tilde x_\infty>0}\;.
\end{equation}
Now we observe that 
\begin{align}
\nonumber 
\Bigpcond{\tau_0 < t + \frac{1}{\lambda} |\log\sigma|}{\tau_0 < \infty} 
&= \Bigpcond{\tilde x_{t+\frac{1}{\lambda} |\log\sigma|}>0}{\tilde x_\infty>0}
\\
&= \biggpcond{N >
\frac{|x_0|}{\sigma}\sqrt{\frac{2\lambda}{1-\sigma^2\e^{-2\lambda t}}}}
{N > \frac{|x_0|}{\sigma}\sqrt{2\lambda}}\;,
\label{Bakhtin05} 
\end{align}
where $N$ is a standard normal random variable. It follows that 
\begin{equation}
\label{Bakhtin06}
\lim_{\sigma\to0}
\Bigpcond{\tau_0 < t + \frac{1}{\lambda} |\log\sigma|}{\tau_0 < \infty}  
= \exp \bigl\{ -x_0^2 \lambda\e^{-2\lambda t}\bigr\}\;.
\end{equation} 
This can be checked by a direct computation, using tail asymptotics of the
normal law. However, it is more interesting to view the last expression
in~\eqref{Bakhtin05} as a residual lifetime, given by the
expression~\eqref{evt09} with $z=(|x_0|/\sigma)\sqrt{2\lambda}$, $A(z)=z^{-2}$
and $x=x_0^2 \lambda\e^{-2\lambda t}$. The right-hand side of~\eqref{Bakhtin06}
is the distribution function of $(Z+\log(x_0^2\lambda))/(2\lambda)$, where $Z$
is a standard Gumbel variable. Building on this computation, Bakhtin provided a
new proof of the following result, which was already obtained by Day
in~\cite{Day7}. 

\begin{theorem}[{\cite{Day7} and \cite[Theorem~3]{Bakhtin_2014a}}]
\label{thm_Bakhtin_tau0} 
Fix $a<0$ and an initial condition $x_0\in(a,0)$. Then 
\begin{equation}
 \label{Bakhtin07}
  \lim_{\sigma\to0} \Law \bigl( \lambda \tau_0 - \abs{\log\sigma} 
  \bigm| \tau_0 < \tau_a \bigr)
 = \Law \biggl( \frac{Z}{2} + \frac{\log(x_0^2\lambda)}{2}  \biggr) \;. 
\end{equation}
\end{theorem}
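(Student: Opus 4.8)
The plan is to deduce Theorem~\ref{thm_Bakhtin_tau0} from the computation~\eqref{Bakhtin04}--\eqref{Bakhtin06}, which already identifies the limit law when one conditions on $\{\tau_0<\infty\}$ rather than on $\{\tau_0<\tau_a\}$. Write $G(u)=\exp\{-x_0^2\lambda\e^{-2u}\}$ for $u\in\R$; this is the distribution function of $(Z+\log(x_0^2\lambda))/2$, and~\eqref{Bakhtin06} states precisely that $\lim_{\sigma\to0}\bigpcond{\lambda\tau_0-\abs{\log\sigma}\leqs u}{\tau_0<\infty}=G(u)$ for every $u$. Since $\{\tau_0<\tau_a\}\subset\{\tau_0<\infty\}$, and $\{\tau_0<\infty\}$ is the disjoint union of $\{\tau_0<\tau_a\}$ and $\{\tau_a<\tau_0<\infty\}$ (the two stopping times cannot coincide, as $x_{\tau_0}=0\neq a=x_{\tau_a}$), the theorem will follow once we show that $\prob{\tau_a<\tau_0<\infty}=\order{\prob{\tau_0<\infty}}$ as $\sigma\to0$.

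To bound $\prob{\tau_a<\tau_0<\infty}$ I would use that restarting the explosive Ornstein--Uhlenbeck process~\eqref{Bakhtin02} at a stopping time yields, conditionally on the position, a process of the same form, so that $\probin{x}{\tau_0<\infty}$ depends only on $x$ and equals the quantity produced by the reflection argument of~\eqref{Bakhtin04}, namely $\probin{x}{\tau_0<\infty}=2\,\bigprob{\widetilde W_{\sigma^2/(2\lambda)}>\abs{x}}$ for $x<0$ (the clock range $[0,\sigma^2/(2\lambda))$ of the time-changed Brownian motion in~\eqref{Bakhtin03} is the same after any restart). Applying the strong Markov property at $\tau_a$, where $x_{\tau_a}=a$, gives
\[
 \prob{\tau_a<\tau_0<\infty}=\prob{\tau_a<\tau_0}\,\probin{a}{\tau_0<\infty}\leqs\probin{a}{\tau_0<\infty}=2\,\bigprob{\widetilde W_{\sigma^2/(2\lambda)}>\abs{a}}\;,
\]
while $\prob{\tau_0<\infty}=2\,\bigprob{\widetilde W_{\sigma^2/(2\lambda)}>\abs{x_0}}$. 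Since $\widetilde W_{\sigma^2/(2\lambda)}$ is centred Gaussian of variance $\sigma^2/(2\lambda)$, Mills-ratio bounds on the Gaussian tail give
\[
 \frac{\prob{\tau_a<\tau_0<\infty}}{\prob{\tau_0<\infty}}\leqs\frac{\bigprob{\widetilde W_{\sigma^2/(2\lambda)}>\abs{a}}}{\bigprob{\widetilde W_{\sigma^2/(2\lambda)}>\abs{x_0}}}=\Order{\e^{-\lambda(a^2-x_0^2)/\sigma^2}}\;,
\]
which tends to $0$ because $a<x_0<0$ forces $a^2>x_0^2$. This Gaussian tail comparison is the crux, and the only step where the hypothesis $a<x_0<0$ and the linearity of the drift actually enter.

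It then remains to assemble the estimates. Fix $u\in\R$. Since $\{\lambda\tau_0-\abs{\log\sigma}\leqs u\}\subset\{\tau_0<\infty\}$ and this set splits as above,
\begin{align*}
 \bigprob{\lambda\tau_0-\abs{\log\sigma}\leqs u,\,\tau_0<\tau_a}
 &=\bigprob{\lambda\tau_0-\abs{\log\sigma}\leqs u,\,\tau_0<\infty}+\Order{\prob{\tau_a<\tau_0<\infty}}\;,\\
 \prob{\tau_0<\tau_a}&=\prob{\tau_0<\infty}+\Order{\prob{\tau_a<\tau_0<\infty}}=\prob{\tau_0<\infty}\bigl(1+\order{1}\bigr)\;.
\end{align*}
Dividing the first line by the second, then dividing numerator and denominator by $\prob{\tau_0<\infty}$, and using~\eqref{Bakhtin06} for the leading term together with the previous paragraph for the $\Order{\prob{\tau_a<\tau_0<\infty}}$ corrections, one obtains $\lim_{\sigma\to0}\bigpcond{\lambda\tau_0-\abs{\log\sigma}\leqs u}{\tau_0<\tau_a}=G(u)$. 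As $G$ is continuous on $\R$, this is convergence in distribution to $(Z+\log(x_0^2\lambda))/2$, i.e.~\eqref{Bakhtin07}. An alternative would be to condition directly via Doob's $h$-transform, as in the proof of Theorem~\ref{thm_CGLM}, turning $\{\tau_0<\tau_a\}$ into an unconditioned first-hitting problem for a process whose drift becomes singular at $a$; but with~\eqref{Bakhtin06} available, the comparison of the two conditioning events above is shorter, and the only genuine work is the tail estimate showing that detouring to $a$ before reaching $0$ is exponentially less likely than reaching $0$ directly.
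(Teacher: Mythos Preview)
Your argument is correct and follows precisely the approach the paper attributes to Bakhtin: you use the computation~\eqref{Bakhtin04}--\eqref{Bakhtin06} for the conditioning on $\{\tau_0<\infty\}$ and then show, via the strong Markov property and a Gaussian tail comparison, that $\prob{\tau_a<\tau_0<\infty}=\order{\prob{\tau_0<\infty}}$, which is exactly the asymptotic equivalence of the two conditionings that the paper invokes after the statement of the theorem.
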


Observe the similarity with Theorem~\ref{thm_convergence_Gumbel} (and also with
Proposition~\ref{prop_pG_p1}). The proof in~\cite{Bakhtin_2014a} uses the fact
that conditioning on $\set{\tau_0 < \tau_a}$ is asymptotically equivalent to
conditioning on $\set{\tau_0 < \infty}$. Note that we use a similar argument in
the proof of Theorem~\ref{thm_convergence_Gumbel} in
Appendix~\ref{ssec_pG_proof}. 

The expression~\eqref{Bakhtin07} differs from~\eqref{lrp04} in some factors $2$.
This is due to the fact that Theorem~\ref{thm_Bakhtin_tau0} considers the
first-hitting time $\tau_0$ of the saddle, while Theorem~\ref{thm_CGLM}
considers the first-hitting time $\tau_b$ of a point $b$ in the right-hand
potential well. We will come back to this point in Section~\ref{ssec_tba}. 

The observations presented here provide a connection between first-exit times
and extreme-value theory, via the reflection principle and residual lifetimes. 
As observed in~\cite[Section~4]{Bakhtin_2013a}, the connection depends on the
seemingly accidental property 
\begin{equation}
 \label{Bakhtin08}
 -\log\Lambda(\e^{-x}) = \Lambda(x)\;,
\end{equation} 
or $\Lambda(\e^{-x}) = \e^{-\Lambda(x)}$, of the Gumbel distribution function. 
Indeed, the right-hand side in~\eqref{Bakhtin06} is identified with
$-\log\Lambda(x)$, evaluated in a point $x$ proportional to 
$-\e^{-2\lambda t}$. 


\section{The duration of phase slips}
\label{sec_slips}


\subsection{Leaving the unstable orbit}
\label{ssec_luo}

Consider again, for a moment, the linear equation~\eqref{Bakhtin01}. Now we are
interested in the situation where the process starts in $x_0=0$, and hits a
point $b>0$ before hitting a point $a<0$. In this section, $\Theta$ will denote 
the random variable $\Theta=-\log|N|$, where $N$ is a standard normal variable. 
Its density is given by 
\begin{equation}
 \label{luo01}
 \frac{\6}{\6t} 2 \prob{N < -\e^{-t}}
 = \sqrt{\frac2\pi} \e^{-t-\frac12\e^{-2t}}\;,
\end{equation} 
which is similar to, but different from, the density of a Gumbel
distribution, see~\figref{fig_Theta}. 

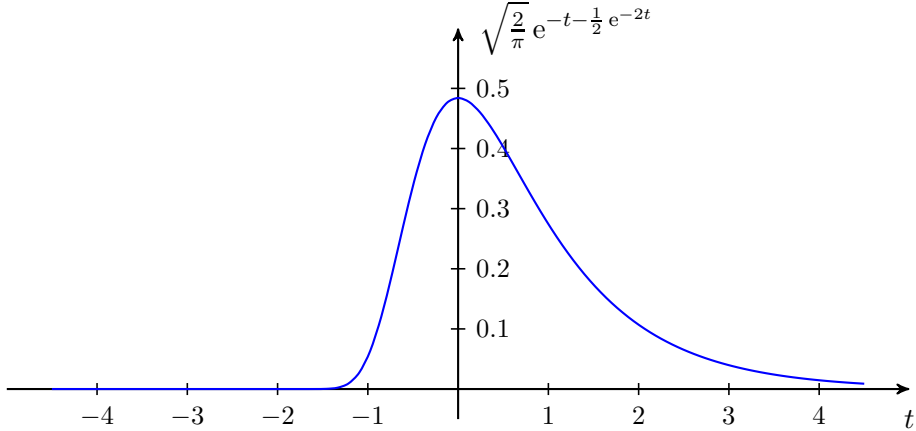
\begin{figure}
\begin{center}
\begin{tikzpicture}[>=stealth',main node/.style={circle,minimum
size=0.25cm,fill=blue!20,draw},x=1.2cm,y=0.8cm]

\draw[->,thick] (-5,0) -> (5,0);
\draw[->,thick] (0,-0.5) -> (0,6.0);

\foreach \x in {1,...,4}
\draw[semithick] (\x,-0.1) -- node[below=0.1cm] {{\small $\x$}}
(\x,0.1);

\foreach \x in {-1,...,-4}
\draw[semithick] (\x,-0.1) -- node[below=0.1cm] {{\small $\x$}}
(\x,0.1);

\foreach \y in {1,...,5}
\draw[semithick] (-0.08,\y) -- node[right=0.1cm] {{\small $0.\y$}}
(0.08,\y);

\draw[blue,thick,-,smooth,domain=-4.5:4.5,samples=75,/pgf/fpu,/pgf/fpu/output
format=fixed] plot (\x, {
sqrt(2/pi)*10*exp(-\x -0.5*exp(-2*\x) ) });

\node[] at (5.0,-0.5) {$t$};
\node[] at (1.2,6.0) {$\sqrt{\frac2\pi}\e^{-t-\frac12\e^{-2t}}$};
\end{tikzpicture}
\vspace{-5mm}
\end{center}
\caption[]{Density of the random variable $\Theta=-\log|N|$.
}
\label{fig_Theta}
\end{figure}

\begin{theorem}[{\cite{Day2,Bakhtin_2008_SPA,Bakhtin_2011_PTRF}}]
\label{thm_Bakhtin_taub} 
Fix $a<0<b$ and an initial condition $x_0=0$. Then the linear
system~\eqref{Bakhtin01} satisfies 
\begin{equation}
 \label{luo02}
  \lim_{\sigma\to0} \Law \bigl( \lambda \tau_b - \abs{\log\sigma} 
  \bigm| \tau_b < \tau_a \bigr)
 = \Law \biggl( \Theta + \frac{\log(2b^2\lambda)}{2}  \biggr) \;. 
\end{equation}
\end{theorem}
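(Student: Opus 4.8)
The plan is to adapt Bakhtin's time-change argument from Section~\ref{ssec_Bakhtin} to the case where the process starts \emph{at} the saddle $x_0=0$ and is targeted at the far point $b>0$. The starting point is the representation~\eqref{Bakhtin03}: with $x_0=0$ it reads $x_t=\e^{\lambda t}\tilde x_t$, where $\tilde x_t=\widetilde W_{u(t)}$ is a Brownian motion started at $0$, run according to the clock $u(t)=\sigma^2(1-\e^{-2\lambda t})/(2\lambda)$. Since $u$ increases to $u(\infty)=\sigma^2/(2\lambda)$, the limit $\tilde x_\infty:=\lim_{t\to\infty}\tilde x_t$ exists almost surely, with $\Law(\tilde x_\infty)=\Law\bigl(\tfrac{\sigma}{\sqrt{2\lambda}}N\bigr)$ for $N$ a standard normal; in particular $\prob{\tilde x_\infty>0}=\tfrac12$, and because $\abs{N}$ is independent of $\sign(N)$, the law of $\Theta=-\log\abs{N}$ is unaffected by conditioning on $\set{\tilde x_\infty>0}$.

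First I would show that, as $\sigma\to0$, conditioning on $\set{\tau_b<\tau_a}$ is asymptotically the same as conditioning on $\set{\tilde x_\infty>0}$. On the latter event the deterministic factor $\e^{\lambda t}$ forces $x_t\to+\infty$, so $\tau_b<\infty$; and to reach either the positive level $b$ or the negative level $a$ one needs $\e^{\lambda t}\abs{\tilde x_t}$ to be of order one, i.e.\ $t$ of order $\tfrac1\lambda\abs{\log\sigma}$, by which time the clock $u(t)$ has essentially reached $u(\infty)$ and $\tilde x_t$ is frozen near $\tilde x_\infty$. Hence the escape direction is decided by $\sign(\tilde x_\infty)$, and I would make this quantitative by bounding $\bigpcondin{0}{\tau_a<\tau_b}{\tilde x_\infty>0}\to0$ via a maximal inequality for the Brownian motion $\tilde x$ on $[0,\tau_b]$ together with the smallness of $u(\infty)$ (which prevents $\tilde x$ from making an $\Order{1}$ negative excursion before $\tau_b$). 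The symmetric statement on $\set{\tilde x_\infty<0}$ shows that the event $\set{\tau_b<\tau_a}\cap\set{\tilde x_\infty<0}$ is negligible, so $\prob{\tau_b<\tau_a}\to\tfrac12$ and the two conditionings coincide in the limit.

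Working on $\set{\tilde x_\infty>0}$, the second step is to show $\tau_b=\tfrac1\lambda\log(b/\tilde x_\infty)+o(1)$ in probability. The bounds come from sandwiching $(1-\eps)\tilde x_\infty\le\tilde x_t\le(1+\eps)\tilde x_\infty$ for $t\ge T_\eps$, valid with high probability once $T_\eps$ is large enough since $u(\infty)-u(T_\eps)=\tfrac{\sigma^2}{2\lambda}\e^{-2\lambda T_\eps}$, while for $t\le T_\eps$ one has $x_t=\e^{\lambda t}\tilde x_t=\Order{\sigma}\ll b$, so $\tau_b>T_\eps$; letting $\eps\to0$ pins down $\tau_b$ to leading order. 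The one delicate point is that $\tilde x_\infty$ may be atypically close to $0$, an event of probability $\prob{\abs{N}<\eta}=\Order{\eta}$ on which $\tau_b$ is merely atypically large; this is absorbed by tightness and continuity of the limiting law. Rescaling gives $\lambda\tau_b-\abs{\log\sigma}=\log b-\log(\tilde x_\infty/\sigma)+o(1)$, and since $\Law(\tilde x_\infty/\sigma\mid\tilde x_\infty>0)=\Law\bigl(\abs{N}/\sqrt{2\lambda}\bigr)$, the right-hand side converges in law to $\log b-\log\abs{N}+\tfrac12\log(2\lambda)=\Theta+\tfrac12\log(2b^2\lambda)$, which is~\eqref{luo02}. (As a check, $\prob{-\log\abs{N}\le t}=2\prob{N\le-\e^{-t}}$ differentiates to the density~\eqref{luo01}.)

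The main obstacle is the first step. Because the trajectory starts exactly at the unstable equilibrium, $\tau_a$ and $\tau_b$ occur on the same $\tfrac1\lambda\abs{\log\sigma}$ timescale and are driven by the same tiny Brownian fluctuation, so ``$b$ before $a$'' is not obviously equivalent to ``escape upward''; the argument rests on the observation that by the time either far level is approached the slow clock $u(t)$ has already run its course, so the ordering is governed by $\sign(\tilde x_\infty)$ up to an error controlled by Brownian maximal estimates on the short window where $u(t)$ is still appreciably below $u(\infty)$. Everything else --- the deterministic identity $\tau_b\approx\tfrac1\lambda\log(b/\tilde x_\infty)$ and the explicit normal computation --- is routine, in the same spirit as Bakhtin's analysis leading to~\eqref{Bakhtin06}.
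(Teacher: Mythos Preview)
Your proposal is correct and follows essentially the same line as the paper's explanation (which is itself only an intuition sketch deferring to~\cite{Day2,Bakhtin_2008_SPA}): both use the representation $x_t=\e^{\lambda t}\tilde x_t$ with $\tilde x_\infty$ a centered Gaussian of variance $\sigma^2/(2\lambda)$, identify the escape direction with $\sign(\tilde x_\infty)$, and read off $\lambda\tau_b-\abs{\log\sigma}\simeq\tfrac12\log(2\lambda b^2)-\log\abs{N}$ from $b=\e^{\lambda\tau_b}\tilde x_{\tau_b}\simeq\e^{\lambda\tau_b}\tilde x_\infty$. The only difference is organisational: the paper first treats the symmetric case $a=-b$ via $\tau=\tau_a\wedge\tau_b$ and then invokes Day's result that $\sign(x_\tau)$ is asymptotically independent of $\abs{N}$, deferring the asymmetric case to~\cite{Bakhtin_2008_SPA}; you instead argue directly that $\set{\tau_b<\tau_a}$ coincides with $\set{\tilde x_\infty>0}$ up to vanishing probability, which handles the asymmetric case in one stroke.
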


The intuition for this result is as follows. Consider first the symmetric case
where $a=-b$ and let $\tau=\inf\setsuch{t>0}{|x_t|=b}=\tau_a\wedge\tau_b$. The
solution of~\eqref{Bakhtin01} starting in $0$ can be written $x_t = \e^{\lambda
t} \tilde x_t$, where $\tilde x_t = \sigma\sqrt{(1-\e^{-2\lambda
t})/(2\lambda)}\,N$ and $N$ is a standard normal random variable,
cf.~\eqref{Bakhtin03}. The condition $|x_\tau|=b$ yields 
\begin{equation}
 \label{luo03}
 b 
 = \e^{\lambda\tau} \sigma \sqrt{\frac{1-\e^{-2\lambda\tau}}{2\lambda}}|N|
 \simeq \e^{\lambda\tau} \sigma
\frac{1}{\sqrt{2\lambda}}|N|\;.
\end{equation}
Solving for $\tau$ yields $\lambda\tau - |\log\sigma| \simeq \log(2\lambda
b^2)/2 - \log|N|$. One can also show~\cite[Theorem~2.1]{Day2} that
$\sign(x_\tau)$ converges to a random variable $\nu$, independent of $N$, such
that $\prob{\nu=1}=\prob{\nu=-1}=1/2$. This implies~\eqref{luo02} in the
symmetric case, and the asymmetric case is dealt with
in~\cite[Theorem~1]{Bakhtin_2008_SPA}. 

Let us return to the nonlinear system~\eqref{exit03} governing the coupled
oscillators. We seek a result similar to Theorem~\ref{thm_Bakhtin_taub} for the
first exit from a neighbourhood of the unstable orbit. The scaling argument
given at the beginning of Section~\ref{ssec_cycling} indicates that simpler
expressions will be obtained if this neighbourhood has a non-constant width of
size proportional to $\sqrt{2\lambda_+T_+\hper(\ph)}$. This is also consistent
with the discussion in~\cite[Section~3.2.1]{BGbook}. Let us thus set 
\begin{equation}
 \label{luo03b}
 \tilde\tau_{\delta} = \inf\Bigsetsuch{t>0}{r_t =
\delta\sqrt{2\lambda_+T_+\hper(\ph)}}\;.
\end{equation} 

\begin{theorem}
\label{thm_exit}
Fix an initial condition $(\ph_0,r_0=0)$ on the
unstable periodic orbit. Then the system~\eqref{exit03} satisfies 
\begin{equation}
 \label{luo04}
  \lim_{\sigma\to0} \Law \bigl( \theta(\ph_{\tilde\tau_{\delta}}) -
\theta(\ph_0) - \abs{\log\sigma} 
  \bigm| \tilde\tau_{\delta} < \tilde\tau_{-\delta} \bigr)
 = \Law \biggl( \Theta + \frac{\log ( 2\lambda_+\delta^2 )}2  + \Order{\delta}
\biggr)
\end{equation}
as $\delta\to0$. 
\end{theorem}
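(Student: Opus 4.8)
The plan is to reduce the nonlinear system \eqref{exit03} in a neighbourhood of the unstable orbit $r=0$ to the linear equation \eqref{Bakhtin01}, for which Theorem~\ref{thm_Bakhtin_taub} already supplies the limiting law, and to show that the reduction error is $\Order{\delta}$ uniformly in $\sigma$. First I would apply to the \emph{full} system \eqref{exit03}, not only to its linearisation \eqref{cycling01}, the $\ph$-dependent scaling $r=[2\lambda_+T_+\hper(\ph)]^{1/2}y$ followed by the random time change $\6t=[\lambda_+T_+/\theta'(\ph_t)]\6s$ of Section~\ref{ssec_cycling}. Using \eqref{exit05} (so that $f_r(r,\ph)=\lambda_+r+\Order{r^2}$) and the equal-time parametrisation ($f_\ph(0,\ph)=1/T_+$), this turns the $y$-dynamics into
\[
 \6y_s = \lambda_+ y_s\6s + b(y_s,s)\6s + \sigma\,\hat g(y_s,s)\6W_s\;,
\]
where, on the set $\{|y_s|\le\delta\}$, the drift correction satisfies $b(y_s,s)=\Order{y_s^2}$ and $\hat g\hat g^{\text T}=1+\Order{y_s}$. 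Two features are essential. The curved boundary $\{r_t=\pm\delta\sqrt{2\lambda_+T_+\hper(\ph_t)}\}$ defining $\tilde\tau_{\pm\delta}$ in \eqref{luo03b} becomes \emph{exactly} $\{y=\pm\delta\}$ (this is precisely why the non-constant width was chosen), and $\6\theta(\ph_s)/\6s=\lambda_++\Order{y_s}$, so that if $s^*$ denotes the exit time of $y$ from $(-\delta,\delta)$ in the $s$-parametrisation, then $\theta(\ph_{\tilde\tau_\delta})-\theta(\ph_0)=\lambda_+s^*+\Order{\delta}$.

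Next I would show that the corrections $b$ and $\hat g-\hat g(0,\cdot)$ do not affect the conditioned exit law to leading order. As in Bakhtin's computation and as reflected in \eqref{osc01}, on the event of escape through $\pm\delta$ the process hugs the orbit exponentially closely, $|y_s|\asymp\sigma\e^{\lambda_+s}$, and reaches size of order $\delta$ only near $s^*$. Hence the \emph{accumulated} drift correction is harmless: $\int_0^{s^*}|b(y_s,s)|\6s=\Order{\int_0^{s^*}\sigma^2\e^{2\lambda_+s}\6s}=\Order{|y_{s^*}|^2}=\Order{\delta^2}$, the correction to $\6\theta(\ph_s)/\6s$ integrates to $\Order{\delta}$ by the same computation, and the diffusion-coefficient mismatch is even smaller, of relative order $\sigma$. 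A standard perturbation argument (a Girsanov change of measure, with a Radon--Nikodym density that is controlled by these small quantities, or equivalently the coupling argument outlined in Appendix~\ref{ssec_pG_u}) then shows that the conditioned law in \eqref{luo04} differs from the corresponding conditioned law for the genuinely linear process $\6\bar y_s=\lambda_+\bar y_s\6s+\sigma\6B_s$ started at $0$ and stopped on exit from $(-\delta,\delta)$ by a quantity which tends to $0$ as $\sigma\to0$ and whose $\sigma\to0$ limit is $\Order{\delta}$. Here one uses, exactly as in the proof of Theorem~\ref{thm_Bakhtin_taub} (and in the spirit of the conditioning argument of \cite{Bakhtin_2014a} used for Theorem~\ref{thm_convergence_Gumbel}), that the escape side and the rescaled escape time $\lambda_+s^*-\abs{\log\sigma}$ are asymptotically independent, so that conditioning on $\{\tilde\tau_\delta<\tilde\tau_{-\delta}\}$ perturbs the time law only through the same $\Order{\delta}$ terms.

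Finally I would invoke Theorem~\ref{thm_Bakhtin_taub} for $\bar y$ with $\lambda=\lambda_+$, $a=-\delta$, $b=\delta$: conditionally on exiting through $+\delta$, the random variable $\lambda_+s^*-\abs{\log\sigma}$ converges in law to $\Theta+\tfrac12\log(2\lambda_+\delta^2)$. Combining this with the identity $\theta(\ph_{\tilde\tau_\delta})-\theta(\ph_0)=\lambda_+s^*+\Order{\delta}$ from the change of variables and with the error control of the previous step yields \eqref{luo04}.

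The step I expect to be the main obstacle is the second one: making rigorous that the nonlinear drift and the $\ph$-dependence of the diffusion coefficient perturb the \emph{conditioned} exit law by only $\Order{\delta}$, and that this bound is \emph{uniform in $\sigma$}, so that the limits $\sigma\to0$ and $\delta\to0$ may be taken in this order. This rests on the a priori estimates on harmonic measures / exit densities developed in \cite{BG_periodic2} and recalled in Appendix~\ref{ssec_pG_u}, together with the observation that the conditioning event, although it becomes a $\sigma$-dependent rare event in the limit, has comparable probability for the linear and the nonlinear process and is asymptotically independent of the rescaled exit time.
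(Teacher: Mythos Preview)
Your overall strategy---reduce to the linear equation via the $\hper$-scaling and time change, then invoke Theorem~\ref{thm_Bakhtin_taub}---is sound and is what the paper does in spirit, but the paper's execution is genuinely different, and the difference matters for the step you yourself flag as the obstacle.

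The paper does \emph{not} compare the nonlinear and linear processes on the whole range $[0,\delta]$. Instead it uses a two-scale decomposition (Lemmas~\ref{lem_exit1} and~\ref{lem_exit2}): first it studies exit from an inner strip of width $h_0=\sigma^\gamma$ with $\gamma\in(\tfrac12,1)$, where the quadratic drift correction is $\Order{h_0^2}=o(\sigma)$ and can be absorbed directly into the representation formula~\eqref{px03:4} via Bernstein-type bounds; then, from $h_0$ to $\delta$, it shows that the stochastic path tracks the deterministic flow, and a one-line ODE estimate gives the additional $\log(\delta/h_0)+\Order{\delta}$. The two pieces add up to~\eqref{luo04}. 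This decomposition is precisely what makes rigorous your heuristic $|y_s|\asymp\sigma\e^{\lambda_+ s}$: it is true inside the $h_0$-strip because the process is small, and outside because the deterministic flow dominates.

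Your direct route runs into trouble at the technical level. The Girsanov suggestion is problematic: the quadratic variation term in the Radon--Nikodym exponent is $\sigma^{-2}\int_0^{s^*}b(y_s,s)^2\6s$, and even under your own heuristic $|y_s|\asymp\sigma\e^{\lambda_+s}$ this is of order $\delta^4/\sigma^2\to\infty$, so the density is not close to~$1$. A coupling argument comparing $y$ and $\bar y$ can be made to work, but it too requires the a~priori bound $|y_s|\lesssim\sigma\e^{\lambda_+s}$ with high probability (the naive bound $|y_s|\le\delta$ gives errors of order $\delta\,|\log\sigma|$ in both the accumulated drift and in $\theta(\ph_{\tilde\tau_\delta})-\theta(\ph_0)-\lambda_+s^*$, which is not uniform in~$\sigma$). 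Establishing that a~priori bound is essentially the content of Lemma~\ref{lem_exit1}. Also, Appendix~\ref{ssec_pG_u} does not contain a coupling argument of the type you need; it compares coarse-grained distributions via characteristic functions for a different purpose.

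In short: your plan is correct at the level of intuition, but the paper's intermediate scale $h_0=\sigma^\gamma$ is the device that turns the heuristic into a proof and replaces the ill-behaved global perturbation argument.
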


We give the proof in Appendix~\ref{ssec_proof_exit}. Note that this result is
indeed consistent with~\eqref{luo02}, if we take into account the fact that
$\hper(\ph)\equiv1/(2\lambda_+T_+)$ in the case of a constant diffusion matrix
$D_{rr}\equiv 1$. 


\subsection{There and back again}
\label{ssec_tba}

A nice observation in~\cite{Bakhtin_2014a} is that
Theorems~\ref{thm_Bakhtin_tau0} and~\ref{thm_Bakhtin_taub} imply
Theorem~\ref{thm_CGLM} on the length of reactive paths in the linear case. This
follows immediately from the following fact. 

\begin{lemma}[\cite{Bakhtin_2014a}]
\label{lem_Bakhtin} 
Let $Z$ and $\Theta=-\log|N|$ be independent random variables, where $Z$
follows a standard Gumbel law, and $N$ a standard normal law. Then 
\begin{equation}
 \label{tba10}
 \Law \biggl( \frac12 Z + \Theta \biggr) 
 = \Law \biggl( Z + \frac{\log2}2 \biggr) \;.
\end{equation} 
\end{lemma}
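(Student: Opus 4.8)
The plan is to prove the distributional identity $\Law(\tfrac12 Z + \Theta) = \Law(Z + \tfrac{\log 2}{2})$ by a direct computation of characteristic functions (Mellin/Fourier transforms), exploiting the well-known fact that the Gumbel and related laws have Gamma-function transforms. Recall that $Z$ standard Gumbel has $\E[\e^{\icx s Z}] = \Gamma(1 - \icx s)$, and that $\Theta = -\log|N|$ with $N$ standard normal has $\E[\e^{\icx s\Theta}] = \E[|N|^{-\icx s}]$. The latter is a standard Gaussian absolute moment: $\E[|N|^{u}] = 2^{u/2}\Gamma(\tfrac{u+1}{2})/\Gamma(\tfrac12) = 2^{u/2}\Gamma(\tfrac{u+1}{2})/\sqrt{\pi}$, valid for $\re u > -1$ and extending analytically; setting $u = -\icx s$ gives $\E[\e^{\icx s\Theta}] = 2^{-\icx s/2}\Gamma(\tfrac{1-\icx s}{2})/\sqrt{\pi}$.

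The key step is then the Legendre duplication formula for the Gamma function, $\Gamma(w)\Gamma(w+\tfrac12) = 2^{1-2w}\sqrt{\pi}\,\Gamma(2w)$, applied with $w = \tfrac{1-\icx s}{2}$, i.e. $2w = 1 - \icx s$. First I would compute the characteristic function of $\tfrac12 Z + \Theta$: by independence it factors as $\E[\e^{\icx s Z/2}]\,\E[\e^{\icx s\Theta}] = \Gamma(1 - \tfrac{\icx s}{2})\cdot 2^{-\icx s/2}\Gamma(\tfrac{1-\icx s}{2})/\sqrt{\pi}$. Writing $w = \tfrac{1-\icx s}{2}$, the product $\Gamma(w + \tfrac12)\Gamma(w)$ — note $w + \tfrac12 = 1 - \tfrac{\icx s}{2}$ — equals $2^{1-2w}\sqrt{\pi}\,\Gamma(2w) = 2^{\icx s}\sqrt{\pi}\,\Gamma(1-\icx s)$ by duplication. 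Substituting back, the $\sqrt{\pi}$ cancels and one obtains $\E[\e^{\icx s(\tfrac12 Z + \Theta)}] = 2^{-\icx s/2}\cdot 2^{\icx s}\Gamma(1-\icx s) = 2^{\icx s/2}\Gamma(1-\icx s)$. On the other hand $\E[\e^{\icx s(Z + \tfrac{\log 2}{2})}] = \e^{\icx s(\log 2)/2}\Gamma(1-\icx s) = 2^{\icx s/2}\Gamma(1-\icx s)$, which matches exactly. Since characteristic functions determine the law, the identity follows.

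I expect the main obstacle to be purely bookkeeping: getting the normalization of the Gaussian absolute moment right (the $\Gamma(\tfrac12) = \sqrt{\pi}$ factor and the factor of $2$ from $|N|$ folding the two tails), and aligning the arguments so that the duplication formula applies cleanly — in particular checking that $w$ and $w + \tfrac12$ correspond to the $\tfrac12 Z$ piece and the $\Theta$ piece in the right order. One should also note the domain of validity: all these transforms are analytic in a strip around $s \in \R$ (the moment formula needs $\re(-\icx s) > -1$, automatic for real $s$), so the identity of characteristic functions on the real line is legitimate and no analytic-continuation subtlety arises.

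An alternative, more probabilistic route would be to verify the identity at the level of densities: the density of $Z + \tfrac{\log 2}{2}$ is $\sqrt{2}\,\e^{-t}\exp\{-\sqrt 2\,\e^{-t}\}$ (a shifted Gumbel), the density of $\tfrac12 Z$ is $2\,\e^{-2t}\exp\{-\e^{-2t}\}$ (this is exactly the function $A(t)$ from \eqref{cycling09} up to the $\log 2$ shift already absorbed), and the density of $\Theta$ is $\sqrt{2/\pi}\,\e^{-t - \frac12\e^{-2t}}$ from \eqref{luo01}; one then checks that the convolution of the latter two reproduces the former. This reduces to the integral $\int_\R 2\,\e^{-2u}\e^{-\e^{-2u}}\sqrt{2/\pi}\,\e^{-(t-u)-\frac12\e^{-2(t-u)}}\,\6u$, which after the substitution $v = \e^{-u}$ becomes a Gaussian-type integral evaluable in closed form; but this is more laborious than the Gamma-function argument, so I would present the characteristic-function proof as the main one and mention the density computation only as a remark.
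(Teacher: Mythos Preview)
Your proof is correct and takes essentially the same approach as the paper: compute the characteristic functions $\E[\e^{\icx s Z}]=\Gamma(1-\icx s)$ and $\E[\e^{\icx s\Theta}]=2^{-\icx s/2}\Gamma(\tfrac{1-\icx s}{2})/\sqrt{\pi}$, then apply the Legendre duplication formula to identify the product with the characteristic function of $Z+\tfrac{\log 2}{2}$. The paper's proof is in fact a one-sentence pointer to exactly these two ingredients, so your write-up is more detailed than the original but identical in substance.
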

\begin{proof}
This follows directly from the expressions 
\begin{equation}
 \label{tba11}
 \bigexpec{\e^{\icx t Z}} = \Gamma(1-\icx t)
 \qquad \text{and} \qquad
 \bigexpec{\e^{\icx t \Theta}} = \bigexpec{|N|^{-\icx\theta}}
 = \frac{2^{-\icx t/2}}{\sqrt{\pi}}\Gamma \biggl( \frac{1-\icx t}{2}\biggr)
\end{equation} 
for the characteristic functions of $Z$ and $\Theta$, 
and the duplication formula for the Gamma function,
$\sqrt{\pi}\,\Gamma(2z)=2^{2z-1} \Gamma(z) \Gamma(z+\tfrac12)$.
\end{proof}

Let us now apply similar ideas to the nonlinear system~\eqref{exit03} in order
to derive information on the duration of phase slips. In order to define this
duration, consider two families of continuous curves $\Gamma^s_-$ and
$\Gamma^s_+$, depending on a parameter $s\in\R$, periodic in the
$\ph$-direction, and such that each $\Gamma^s_-$ lies in the set
$\set{-1/2<r<0}$ and each $\Gamma^s_+$ lies in $\set{0<r<1/2}$. We set  
\begin{equation}
 \label{tba12}
 \tau^s_\pm = \inf\setsuch{t>0}{(r_t,\ph_t)\in\Gamma^s_\pm}\;,
\end{equation} 
while $\tau_0$ is defined as before by~\eqref{hm01}. Given an initial condition
$(r_0=-1/2,\ph_0)$, let us call a \emph{successful phase slip} a sample path
that does not return to the stable orbit $\set{r=-1/2}$ between $\tau^s_-$ and
$\tau_0$, and that does not return to $\Gamma^s_-$ between $\tau_0$ and
$\tau^s_+$ (see~\figref{fig_phase_slip}). Then we have the following result.

\begin{theorem}
\label{thm_duration}  
There exist families of curves $\set{\Gamma^s_\pm}_{s\in\R}$ such that
conditionally on a successful phase slip,
\begin{align}
\label{tba14a}
\lim_{\sigma\to0} \Law \bigl( \theta(\ph_{\tau_0}) -
\theta(\ph_{\tau^s_-}) - \abs{\log\sigma} \bigr)
 &= \Law \biggl( \frac{Z}{2} - \frac{\log(2)}{2} + s 
\biggr)\;, \\
\label{tba14b} 
\lim_{\sigma\to0} \Law \bigl( \theta(\ph_{\tau^s_+}) -
\theta(\ph_{\tau_0}) - \abs{\log\sigma} \bigr)
 &= \Law \Bigl( \Theta + s 
\Bigr)\;, \\
\label{tba14c} 
\lim_{\sigma\to0} \Law \bigl( \theta(\ph_{\tau^s_+}) -
\theta(\ph_{\tau^s_-}) - 2\abs{\log\sigma} \bigr)
 &= \Law \Bigl( Z + 2s
\Bigr)\;,
\end{align}
where $Z$ denotes a standard Gumbel variable, $\Theta=-\log|N|$, and $N$ is a
standard normal random variable. The curves $\Gamma^s_\pm$ are ordered in the
sense that if $s_1<s_2$, then $\Gamma^{s_1}_\pm$ lies below $\Gamma^{s_2}_\pm$. 
Furthermore, $\Gamma^s_+$ converges to the unstable orbit $\set{r=0}$ as
$s\to-\infty$, and to the stable orbit $\set{r=1/2}$ as
$s\to\infty$. Similarly, $\Gamma^s_-$ converges to the unstable orbit
$\set{r=0}$ as $s\to\infty$, and to the stable orbit $\set{r=-1/2}$ as
$s\to-\infty$. 
\end{theorem}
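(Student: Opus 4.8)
The plan is to build the curves $\Gamma^s_\pm$ directly from translates of the optimal path $\gamma_\infty$, and then to reduce each of the three limits to a theorem already proved in the excerpt. First I would fix the parametrisation: recall from Section~\ref{ssec_cycling} that near the unstable orbit, after the scaling $r=[2\lambda_+T_+\hper(\ph)]^{1/2}y$ and the time change, the dynamics becomes the linear system~\eqref{cycling02}, and the optimal path $\gamma_\infty$ satisfies $|r(\ph)|\simeq c\,\e^{-\theta(\ph)}$. I would \emph{define} $\Gamma^s_-$ to be (in the original variables) the translate of the part of $\gamma_\infty$ lying in $\set{-1/2<r<0}$ that has been shifted in the $\ph$-direction by an amount chosen so that $\theta$ evaluated on $\Gamma^s_-$ differs from its value on $\gamma_\infty$ by exactly $s$; concretely this amounts to taking $\delta=\delta(s)$ in the family $\theta_\delta$ of~\eqref{cycling06} with $s=-\log\delta+\text{(correction)}$, so that $s\to\infty$ corresponds to $\delta\to0$, i.e.\ $\Gamma^s_-\to\set{r=0}$, and $s\to-\infty$ to $\delta\to1/2$, i.e.\ $\Gamma^s_-\to\set{r=-1/2}$. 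The curve $\Gamma^s_+$ is defined by the mirror construction in $\set{0<r<1/2}$, using the branch of $\gamma_\infty$ leaving the unstable orbit; the ordering property ($s_1<s_2\Rightarrow\Gamma^{s_1}_\pm$ below $\Gamma^{s_2}_\pm$) and the limiting behaviour as $s\to\pm\infty$ then hold by construction, since shifting $\theta$ monotonically corresponds to moving the curve monotonically between the two orbits.

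Next I would prove~\eqref{tba14a}. Conditioning on a successful phase slip — the path not returning to $\set{r=-1/2}$ between $\tau^s_-$ and $\tau_0$ — means that after $\tau^s_-$ the process behaves like one started on $\Gamma^s_-$ and conditioned to reach $\set{r=0}$ before falling back; by the quasistationary-distribution analysis of Section~\ref{ssec_rpm} and the argument in Appendix~\ref{ssec_pG_u}, this is asymptotically the situation of Theorem~\ref{thm_convergence_Gumbel} but \emph{without} the geometric factor $Y_m^\sigma$, because conditioning on a single successful crossing removes the counting of failed attempts. Concretely, $\theta(\ph_{\tau_0})-\theta(\ph_{\tau^s_-})-\abs{\log\sigma}$ is, up to the shift $s$ built into $\Gamma^s_-$, exactly the quantity whose law converges to $Z/2-\tfrac{\log2}{2}$ in Theorem~\ref{thm_convergence_Gumbel} (equivalently the $\delta\to0$ statement), and the $s$-dependence enters additively through the definition of $\theta_\delta$. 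So~\eqref{tba14a} follows by transporting Theorem~\ref{thm_convergence_Gumbel} (and its proof in Appendix~\ref{sec_proof_Gumbel}) through the change of curve. For~\eqref{tba14b}, the path now starts on the unstable orbit $\set{r=0}$ at time $\tau_0$, and we want the time to reach $\Gamma^s_+$ without returning to $\Gamma^s_-$; this is precisely the exit problem of Theorem~\ref{thm_exit}, conditioned on $\tilde\tau_\delta<\tilde\tau_{-\delta}$, with $\delta=\delta(s)$. Theorem~\ref{thm_exit} gives the limit law $\Theta+\tfrac{\log(2\lambda_+\delta^2)}{2}+\Order\delta$; absorbing $\tfrac12\log(2\lambda_+\delta^2)$ into the definition of the shift $s$ (this is exactly the same correction used to define $\Gamma^s_+$) yields $\Theta+s$, which is~\eqref{tba14b}.

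Finally~\eqref{tba14c} should follow by adding~\eqref{tba14a} and~\eqref{tba14b} together with an \emph{asymptotic independence} argument: conditionally on a successful phase slip, the pre-$\tau_0$ segment and the post-$\tau_0$ segment are driven, in the $\sigma\to0$ limit, by essentially disjoint portions of the noise and are linked only through the (vanishing-width) passage across $\set{r=0}$, so the limiting random variables $Z/2-\tfrac{\log2}{2}+s$ and $\Theta+s$ are independent. Then $\theta(\ph_{\tau^s_+})-\theta(\ph_{\tau^s_-})-2\abs{\log\sigma}$ is the sum of the two, and Lemma~\ref{lem_Bakhtin} (the identity $\Law(\tfrac12 Z+\Theta)=\Law(Z+\tfrac{\log2}{2})$, applied with an extra independent copy of the Gumbel) collapses $\bigl(\tfrac12 Z_1-\tfrac{\log2}{2}\bigr)+\Theta+2s$ to $Z+2s$ in distribution. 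The main obstacle I expect is the asymptotic independence of the two halves across $\tau_0$: one must show that the exit point on $\set{r=0}$ at time $\tau_0$ (its fractional phase) decouples, in the limit, from the subsequent excursion to $\Gamma^s_+$ — this requires a careful use of the Markov property at $\tau_0$ together with the strong-Markov/mixing estimates underlying the random Poincaré map of Section~\ref{ssec_rpm}, and control of the error terms uniformly as $\delta=\delta(s)\to0$, which is the same uniformity gap flagged in Remark~\ref{rem_osc}'s neighbouring remark about the limits $\sigma\to0$ and $m\to\infty$ (equivalently $\delta\to0$) not commuting. The details go in Appendix~\ref{ssec_proof_exit} alongside the proof of Theorem~\ref{thm_exit}.
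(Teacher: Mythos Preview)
Your overall strategy matches the paper's: \eqref{tba14b} is reduced to Theorem~\ref{thm_exit}, \eqref{tba14a} to the near-unstable-orbit analysis (the paper invokes Proposition~\ref{prop_pG_u1} directly rather than Theorem~\ref{thm_convergence_Gumbel}, but the content is the same), and \eqref{tba14c} to Lemma~\ref{lem_Bakhtin}. The substantive difference is in how the curves $\Gamma^s_\pm$ are built, and here your construction has a gap.

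The paper does \emph{not} take $\Gamma^s_+$ to be a level set at height $\delta(s)$, nor a $\ph$-translate of $\gamma_\infty$. Instead, for small fixed $\delta$ and $u\geqs0$ it lets $\Gamma^u_+(\delta)$ be the image of the curve $\{y=\delta\}$ (equivalently $\{r=\delta\sqrt{2\lambda_+T_+\hper(\ph)}\}$) under the \emph{deterministic flow} for time $u$. Theorem~\ref{thm_exit} combined with Lemma~\ref{lem_exit2} (which says the sample path tracks the deterministic flow once it has left a $\sigma^\gamma$-strip) then gives
\[
\theta(\ph_{\tau_+(\delta)})-\theta(\ph_{\tau_0})-|\log\sigma|
\;\to\;\Theta+\tfrac12\log(2\lambda_+\delta^2)+u+\Order{\delta}\;.
\]
Setting $u=s-\log\delta$ and sending $\delta\to0$ produces a well-defined limiting periodic curve $\Gamma^s_+=\lim_{\delta\to0}\Gamma^{s-\log\delta}_+(\delta)$; this is the regularisation analogous to the integrals in~\eqref{lrp05}. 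The ordering and the limits $s\to\pm\infty$ then follow from monotonicity of the deterministic flow between the two orbits. Your construction runs into two problems: a $\ph$-translate of $\gamma_\infty$ is a single non-periodic arc, not a periodic curve separating the orbits; and a direct parametrisation by level $\delta(s)$ with $s=\tfrac12\log(2\lambda_+\delta^2)$ is only valid in the linearisation region near $r=0$, so it cannot produce curves approaching the stable orbit as $s\to\infty$. The deterministic-flow-image construction resolves both issues at once, because the flow carries small-$\delta$ curves all the way across the strip.

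Your worry about asymptotic independence for~\eqref{tba14c} is legitimate; the paper is terse on this point and simply applies Lemma~\ref{lem_Bakhtin} once the two marginals are established. Your instinct to justify it via the strong Markov property at $\tau_0$ is the right one.
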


We give the proof in Appendix~\ref{ssec_proof_duration}, along with more details
on how to construct the curves $\Gamma^s_\pm$. In a nutshell, they are obtained
by letting evolve under the deterministic flow the curves
$\set{r=\pm\delta\sqrt{2\lambda_+T_+\hper(\ph)}}$ introduced in the previous
section. The parameter $s$ plays an analogous role as $T(x_0,b)$
in~\eqref{lrp05}.


\section{Conclusion and outlook}
\label{sec_conclusion}

Let us restate our main results in an informal way.
Theorem~\ref{thm_convergence_Gumbel} shows that in the weak-noise limit, the
position of the center $\ph_{\tau_0}$ of a phase slip, defined by the crossing
location of the unstable orbit, behaves like 
\begin{equation}
 \label{conc1}
 \theta(\ph_{\tau_0}) \simeq |\log\sigma| + \lambda_+T_+ Y^\sigma 
 + \frac{Z}{2} - \frac{\log2}{2}\;,
\end{equation} 
where $Y^\sigma$ is an asymptotically geometric random variable with success
probability of order $\e^{-I_\infty/\sigma^2}$, and $Z$ is a standard Gumbel
random variable. This expression is dominated by the term $Y^\sigma$, which
accounts for exponentially long waiting times between phase slips. The term
$\abs{\log\sigma}$ is responsible for the cycling phenomenon, and the term
$(Z-\log(2))/2$ determines the shape of the cycling profile. 

Theorem~\ref{thm_duration} shows in particular that the duration of a phase
slip behaves like 
\begin{equation}
 \label{conc2}
 \theta(\ph_{\tau_+}) - \theta(\ph_{\tau_-}) 
 \simeq 2|\log\sigma| + Z + 2s\;,
\end{equation} 
where $s$ is essentially the deterministic time required to travel between
$\sigma$-neighbourhoods of the orbits, while the other two terms account for
the time spent near the unstable orbit. The dominant term here is
$2\abs{\log\sigma}$, which reflects the intuitive picture that noise enlarges
the orbit to a thickness of order $\sigma$, outside which the deterministic
dynamics dominates. The phase slip duration is split
into two contributions from before and after crossing the unstable orbit, of
respective size $(Z-\log2)/2 + s$ and $\Theta + s$.

Decreasing the noise intensity has two main effects. The first one is to
increase the duration of phase slips by an amount  $2|\log\sigma|$, which is
due to the longer time spent near the unstable orbit. The second effect is to
the shift of the phase slip location by an amount $|\log\sigma|$, which results
in log-periodic oscillations. Note that other quantities of interest can be
deduced from the above expressions, such as the distribution of residence times,
which are the time spans separating phase slips when the system is in a
stationary state. The residence-time distribution is given by the sum of an
asymptotically geometric random variable and a so-called logistic random
variable, i.e., a random variable having the law of the difference of two
independent Gumbel variables, with density
proportional to $1/\cosh^2 (\theta)$~\cite{BG9}. 

The connection between first-exit distributions and extreme-value theory is
partially understood in the context as residual lifetimes, as summarized in
Section~\ref{ssec_Bakhtin}. It is probable that other connections remain to be
discovered. For instance, functional equations satisfied by the Gumbel
distribution seem to play an important r\^ole. One of them is the equation 
\begin{equation}
\label{conc3}
\Lambda\bigl(x-\log 2\bigr)^2 = \Lambda(x)
\end{equation}
which results from the Gumbel law being max-stable. Another one is the equation
\begin{equation}
\label{conc4}
\Lambda\bigl(\e^{-x}\bigr) = \e^{-\Lambda(x)}
\end{equation}
which appears in the context of the residual-lifetime interpretation. These
functional equations may prove useful to establish other connections with
critical phenomena and discrete scale invariance. 


\appendix

\section{Proof of Theorem~\ref{thm_convergence_Gumbel}}
\label{sec_proof_Gumbel}


\subsection{Dynamics near the unstable orbit}
\label{ssec_pG_u}

To prove convergence in law of certain random variables, we will work with
characteristic functions. The following lemma allows to compare characteristic
functions of random variables that are only known in a coarse-grained sense,
via probabilities to belong to small intervals of size $\Delta$. 

\begin{lemma}
\label{lem_pGu} 
Let $X,X_0$ be real-valued random variables. Assume there exist constants 
$a < b \in\R$, $\alpha, \beta>0$ such that as $\Delta\to0$, 
\begin{enum}
\item 	$\prob{X_0\not\in[a,b]} = \Order{\Delta^\alpha}$,
\item 	for any $k\in\Z$ such that $I_k=[k\Delta,(k+1)\Delta]$ intersects
$[a,b]$, 
\begin{equation}
 \label{pG_u01}
 \prob{X\in I_k} = \prob{X_0\in I_k} \bigl[ 1+\Order{\Delta^\beta} \bigr]\;.
\end{equation} 
\end{enum}
Then 
\begin{equation}
 \label{pG_u02}
 \bigl| \bigexpec{\e^{\icx\eta X}} - \bigexpec{\e^{\icx\eta X_0}}\bigr|
 \leqs 4 \sin \biggl( \frac{|\eta|\Delta}{2} \biggr) +
\Order{\Delta^{\alpha\wedge\beta}}
\end{equation} 
holds for all $\eta\in\R$. 
\end{lemma}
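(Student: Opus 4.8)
The plan is to compare the two characteristic functions by freezing $\e^{\icx\eta x}$ on each interval $I_k$ to its value at the left endpoint $t_k=k\Delta$, so that $\bigexpec{\e^{\icx\eta X}}$ becomes, up to controlled errors, a step-function sum $\sum_k\e^{\icx\eta t_k}\prob{X\in I_k}$, and likewise for $X_0$; hypothesis (ii) then makes the two step-function sums close, while hypothesis (i) disposes of the probability mass lying away from $[a,b]$. Since $X$ and $X_0$ are real, $\bigexpec{\e^{-\icx\eta X}}=\overline{\bigexpec{\e^{\icx\eta X}}}$, so the quantity to be bounded is unchanged under $\eta\mapsto-\eta$ and it suffices to treat $\eta\geqs0$; moreover, because the asserted estimate is asymptotic in $\Delta$ with $\eta$ fixed, I may assume $\eta\Delta\leqs\pi$.

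First I would introduce the finite index set $\cK=\setsuch{k\in\Z}{I_k\cap[a,b]\neq\emptyset}$, which has $\Order{\Delta^{-1}}$ elements, and read hypothesis (ii) as: there exist $C,\Delta_0>0$ with $\bigabs{\prob{X\in I_k}-\prob{X_0\in I_k}}\leqs C\Delta^\beta\prob{X_0\in I_k}$ for all $k\in\cK$ and $\Delta<\Delta_0$. Summing over $k\in\cK$ and using $\sum_{k\in\cK}\prob{X_0\in I_k}\leqs2$ (consecutive $I_k$ overlap only at an endpoint) gives $\sum_{k\in\cK}\bigabs{\prob{X\in I_k}-\prob{X_0\in I_k}}=\Order{\Delta^\beta}$. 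Combined with $[a,b]\subseteq\bigcup_{k\in\cK}I_k$ and hypothesis (i), this yields both $\prob{X_0\notin\bigcup_{k\in\cK}I_k}=\Order{\Delta^\alpha}$ and $\prob{X\notin\bigcup_{k\in\cK}I_k}=\Order{\Delta^{\alpha\wedge\beta}}$.

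Next I would split $\bigexpec{\e^{\icx\eta X}}=\sum_{k\in\cK}\bigexpec{\e^{\icx\eta X}\one_{X\in I_k}}+\bigexpec{\e^{\icx\eta X}\one_{X\notin\bigcup_{k\in\cK}I_k}}$ and similarly for $X_0$; the last term is bounded in modulus by $\Order{\Delta^{\alpha\wedge\beta}}$, resp.\ $\Order{\Delta^\alpha}$, by the previous step. For the sums over $\cK$, the elementary bound $\bigabs{\e^{\icx\eta x}-\e^{\icx\eta t_k}}=2\sin\bigl(\eta(x-t_k)/2\bigr)\leqs2\sin(\eta\Delta/2)$ for $x\in I_k$ — valid since $0\leqs\eta(x-t_k)/2\leqs\eta\Delta/2\leqs\pi/2$ and $\sin$ is increasing there — gives $\bigabs{\bigexpec{\e^{\icx\eta X}\one_{X\in I_k}}-\e^{\icx\eta t_k}\prob{X\in I_k}}\leqs2\sin(\eta\Delta/2)\prob{X\in I_k}$, hence a total ``freezing'' error of at most $2\sin(\eta\Delta/2)$ for $X$ and the same for $X_0$. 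Finally $\bigabs{\sum_{k\in\cK}\e^{\icx\eta t_k}\bigl(\prob{X\in I_k}-\prob{X_0\in I_k}\bigr)}\leqs\sum_{k\in\cK}\bigabs{\prob{X\in I_k}-\prob{X_0\in I_k}}=\Order{\Delta^\beta}$. Adding the five contributions yields $\bigabs{\bigexpec{\e^{\icx\eta X}}-\bigexpec{\e^{\icx\eta X_0}}}\leqs4\sin(\eta\Delta/2)+\Order{\Delta^{\alpha\wedge\beta}}$, which is the claim.

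None of the steps is genuinely hard; the one point that needs care is that the $\Order{\Delta^\beta}$ \emph{relative} errors in (ii), although summed over $\Order{\Delta^{-1}}$ intervals, do not accumulate, precisely because they are weighted by the probabilities $\prob{X_0\in I_k}$ whose sum is bounded by a constant rather than by a constant times $\Delta$. This is exactly why it matters that (ii) is phrased multiplicatively. The reduction to $\eta\Delta\leqs\pi$ is harmless since, for each fixed $\eta$, it holds once $\Delta$ is small enough.
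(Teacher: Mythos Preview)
Your proof is correct and follows essentially the same route as the paper: split off the mass outside the relevant range, freeze $\e^{\icx\eta x}$ at the left endpoint of each $I_k$, bound the freezing error by $2\sin(|\eta|\Delta/2)\prob{X\in I_k}$, and use the multiplicative form of (ii) so that the comparison term sums to $\Order{\Delta^\beta}$. Your explicit reduction to $\eta\Delta\leqs\pi$ is a nice touch, since the inequality $|\e^{\icx\eta(x-k\Delta)}-1|\leqs 2\sin(|\eta|\Delta/2)$ is only literally true in that range; you could also tidy the minor tension between your bound $\sum_{k\in\cK}\prob{X_0\in I_k}\leqs2$ and the later claim of freezing error $\leqs 2\sin(\eta\Delta/2)$ by taking the $I_k$ half-open.
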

\begin{proof}
We start by noting that~\eqref{pG_u01} implies 
\begin{equation}
 \label{pG_u03:1}
 \bigprob{X\in[a,b]} 
 = \bigprob{X_0\in[a,b]} \bigl[ 1+\Order{\Delta^\beta} \bigr]
 = \bigl[ 1-\Order{\Delta^\alpha} \bigr] \bigl[ 1+\Order{\Delta^\beta} \bigr]\;,
\end{equation} 
and thus $\prob{X\not\in[a,b]} = \Order{\Delta^{\alpha\wedge\beta}}$. 
It follows that 
\begin{align}
\nonumber
 \bigl| \bigexpec{\e^{\icx\eta X}1_{\set{X\not\in[a,b]}}} 
 - \bigexpec{\e^{\icx\eta X_0}1_{\set{X_0\not\in[a,b]}}}\bigr|
 &\leqs \bigprob{X\not\in[a,b]} + \bigprob{X_0\not\in[a,b]} \\
 &= \Order{\Delta^{\alpha\wedge\beta}}\;.
 \label{pG_u03:2}
\end{align} 
Next, using the triangular inequality, we obtain for all $k$ such that 
$I_k\cap[a,b]\neq\emptyset$ 
\begin{equation}
 \label{pG_u03:3}
 \bigl| \bigexpec{\e^{\icx\eta X}1_{\set{X\in I_k}}} 
 - \bigexpec{\e^{\icx\eta X_0}1_{\set{X_0\in I_k}}}\bigr|
 \leqs A_k + B_k + C_k\;,
\end{equation} 
where 
\begin{align}
\nonumber
A_k &= \bigl| \bigexpec{\e^{\icx\eta X}1_{\set{X\in I_k}}} 
 - \e^{\icx\eta k\Delta} \bigprob{X\in I_k}\bigr|\;, \\
\nonumber
B_k &= \bigl| \e^{\icx\eta k\Delta} 
 \bigl(\bigprob{X\in I_k}-\bigprob{X_0\in I_k} \bigr) \bigr|\;, \\
C_k &= \bigl| \e^{\icx\eta k\Delta} \bigprob{X_0\in I_k} 
 - \bigexpec{\e^{\icx\eta k X_0}1_{\set{X_0\in I_k}}} \bigr|\;.
 \label{pG_u03:4}
\end{align}
Using the fact that $|\e^{\icx\eta(x-k\Delta)}-1| \leqs 2\sin(|\eta|\Delta/2)$
for all $x\in I_k$, we obtain 
\begin{align}
\nonumber
 A_k 
 &\leqs \int_{I_k} \bigl| \e^{\icx\eta x} - \e^{\icx\eta k\Delta}\bigr|
 \bigprob{X\in\6x}
 \leqs 2 \sin \biggl( \frac{|\eta|\Delta}{2} \biggr) 
 \bigprob{X\in I_k}\;, \\
 C_k 
 &\leqs \int_{I_k} \bigl| \e^{\icx\eta k\Delta} - \e^{\icx\eta x} \bigr|
 \bigprob{X_0\in\6x}
 \leqs 2 \sin \biggl( \frac{|\eta|\Delta}{2} \biggr) 
 \bigprob{X_0\in I_k}\;.
 \label{pG_u03:5}
\end{align} 
In addition, \eqref{pG_u01} implies that 
$B_k \leqs \Order{\Delta^\beta} \prob{X_0\in I_k}$. 
Hence the result follows by summing~\eqref{pG_u03:3} over all
$k$ and adding~\eqref{pG_u03:2}. 
\end{proof}

Consider now the solution $(r_t,\ph_t)$ of~\eqref{exit03}, starting in
$(0,-\delta)$. Recall that $\tau_0$ denotes the first-hitting time of the
unstable orbit at $r=0$. In addition, we denote by $\tau_{-2\delta}$ the
first-hitting time of the line $\set{r=-2\delta}$. 

\begin{prop}
\label{prop_pG_u1} 
For sufficiently small $\delta>0$, 
\begin{equation}
 \label{pG_u04}
 \lim_{\sigma\to0} 
 \bigecondin{0,-\delta}
 {\e^{\icx\eta(\theta_\delta(\ph_{\tau_0})-\abs{\log\sigma})}}
 {\tau_0 < \tau_{-2\delta}}
 = 2^{-\icx\eta/2} \Gamma \biggl( 1 - \icx \frac{\eta}{2}\biggr) +
\Order{\delta}\;,
\end{equation} 
where $\Gamma$ is Euler's Gamma function. 
\end{prop}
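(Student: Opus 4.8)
The plan is to reduce the nonlinear exit problem near the unstable orbit to an explicitly solvable linear one, and then to run the reflection‑principle / residual‑lifetime argument of Section~\ref{ssec_Bakhtin} on that linear model, translating the resulting convergence in law into the claimed convergence of characteristic functions via Lemma~\ref{lem_pGu}.

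First I would localize. On the event $\{\tau_0<\tau_{-2\delta}\}$ the path stays in the strip $\{-2\delta<r<0\}$ up to $\tau_0$, where by~\eqref{exit05} and~\cite[Proposition~2.1]{BG_periodic2} the drift equals $\lambda_+ r\,(1+\Order{\delta})$ and the diffusion coefficient is $g_r(0,\ph)(1+\Order{\delta})$; hence I replace~\eqref{exit03} by the linear system~\eqref{cycling01}, incurring only an $\Order{\delta}$ perturbation of the law. To carry this substitution through at the level of characteristic functions I would invoke Lemma~\ref{lem_pGu}, taking $X$ to be $\theta_\delta(\ph_{\tau_0})-\abs{\log\sigma}$ under~\eqref{exit03} and $X_0$ the same functional under~\eqref{cycling01}, and checking that on intervals $I_k$ of size $\Delta$ the two conditional laws agree up to a factor $1+\Order{\Delta^\beta}$ — a Girsanov/comparison estimate exploiting the confinement to $\abs{r}\le 2\delta$ — with exponential tails beyond a fixed compact set. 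Then I apply to~\eqref{cycling01} the $\ph$‑dependent scaling $r=[2\lambda_+T_+\hper(\ph)]^{1/2}y$ of~\eqref{cycling03b} and the random time change $\6t=[\lambda_+T_+/\theta'(\ph_t)]\6s$, obtaining the normalized equation~\eqref{cycling02} with unit effective diffusion and, crucially, the identity $\theta(\ph_t)-\theta(\ph_0)=\lambda_+ s$, so that $\theta(\ph_{\tau_0})=\theta(\ph_0)+\lambda_+\tilde\tau_0$ with $\tilde\tau_0$ the first‑hitting time of $y=0$.

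For the normalized linear equation I would use the explosive Ornstein–Uhlenbeck representation~\eqref{Bakhtin03}: $y_s=\e^{\lambda_+ s}\tilde y_s$ with $\tilde y_s-y_0$ equal in law to $\sigma\sqrt{(1-\e^{-2\lambda_+ s})/(2\lambda_+)}\,N$, $N$ a standard normal, and $y_0$ the image of $r_0=-\delta$, of order $-\delta$. The reflection principle~\eqref{Bakhtin04} then gives $\bigpcond{\tilde\tau_0<s}{\tilde\tau_0<\infty}=\bigpcond{\tilde y_s>0}{\tilde y_\infty>0}$, and the residual‑lifetime computation~\eqref{Bakhtin05}--\eqref{Bakhtin06} yields, as $\sigma\to0$, convergence in law of $\lambda_+\tilde\tau_0-\abs{\log\sigma}$ to $(Z+\log(y_0^2\lambda_+))/2$ with $Z$ standard Gumbel. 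Tracking the explicit constants — $\theta(\ph_0)=\theta(0)$ from~\eqref{cycling05}, $y_0^2$ from the scaling~\eqref{cycling03b}, and the shifts $-\log\delta$ and $\log(\hper(s^*_\delta)/\hper(0))$ in the definition~\eqref{cycling06} of $\theta_\delta$, which are designed precisely for this — the limit collapses to $(Z-\log2)/2$ up to an $\Order{\delta}$ additive term, whose characteristic function is $2^{-\icx\eta/2}\Gamma(1-\icx\eta/2)\,(1+\Order{\delta})$, giving~\eqref{pG_u04}.

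The main obstacle is the conditioning step: to apply the reflection principle one must replace the conditioning event $\{\tau_0<\tau_{-2\delta}\}$ by $\{\tau_0<\infty\}$. As in~\cite{Bakhtin_2014a}, I would argue that the two conditionings are asymptotically equivalent — conditionally on ever reaching the unstable orbit the Doob‑transformed process acquires an upward drift and hits $r=0$ before descending to $r=-2\delta$ with conditional probability $1-\Order{\delta}$ — but quantifying this uniformly enough to feed into Lemma~\ref{lem_pGu}, and in a way compatible with the order in which the limits $\sigma\to0$ and $\Delta\to0$ are taken (as in Theorem~\ref{thm_periodic2}), is the delicate point. A secondary difficulty is the error bookkeeping in the localization: the $\Order{\delta}$ corrections to drift and diffusion must be shown to propagate to an $\Order{\delta}$ error in the limiting characteristic function and not to accumulate over the long time horizon $\sim\lambda_+^{-1}\abs{\log\sigma}$ — it is exactly the logarithmic scaling built into $\theta$ that keeps this controlled.
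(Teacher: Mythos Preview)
Your strategy is a genuine alternative to the paper's, but it differs in a structurally important way. The paper does \emph{not} localize to the linear system and then run the Bakhtin reflection--principle argument. Instead it imports, as a black box, the density estimates~\cite[(7.14) and (7.18)]{BG_periodic2}, which already assert that for the \emph{nonlinear} killed process started at $(0,-\delta)$,
\[
\bigprobin{0,-\delta}{\theta_\delta(\ph_{\tau_0})-|\log\sigma|\in[t,t+\Delta]}
= C(\sigma)\,\Delta\, A\bigl(t+\Order{\delta\ell}\bigr)\bigl[1+\Order{\Delta^\beta}\bigr]\;,
\qquad A(t)=\e^{-2t-\tfrac12\e^{-2t}}\;.
\]
Lemma~\ref{lem_pGu} is then applied with $X=\theta_\delta(\ph_{\tau_0})-|\log\sigma|$ (conditioned) and with $X_0$ a random variable of \emph{explicit} density proportional to $A$, not with $X_0$ the exit functional of a linearized SDE. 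The characteristic function of $X_0$ is computed by the substitution $v=\tfrac12\e^{-2t}$, which turns $\int\e^{\icx\eta t}A(t)\,\6t$ into $2^{-\icx\eta/2}\Gamma(1-\icx\eta/2)$. The conditioning on $\{\tau_0<\tau_{-2\delta}\}$ enters only through the normalizing constant $\widetilde C(\sigma)$, identified as $1+\Order{\delta}$ by setting $\eta=0$.

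What your route buys is self-containment: you would not need the heavy machinery of~\cite[Sections~6--7]{BG_periodic2}. What the paper's route buys is that all the hard analysis --- precisely the control of the nonlinear exit density over the long time horizon $\sim\lambda_+^{-1}|\log\sigma|$ --- has already been done elsewhere and is simply cited. The gap in your proposal is the step you flag yourself: the ``Girsanov/comparison estimate'' showing that the nonlinear and linear exit laws agree on intervals $I_k$ up to $1+\Order{\Delta^\beta}$ is not a routine perturbation argument, because the $\Order{\delta}$ corrections to drift and diffusion act over a time of order $|\log\sigma|$, and a naive Girsanov bound produces a Radon--Nikodym factor whose exponent scales like $\delta^2|\log\sigma|/\sigma^2$, which diverges. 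Obtaining the density comparison at the required precision is essentially equivalent to re-proving~\cite[(7.14),(7.18)]{BG_periodic2}, so your sketch, while conceptually coherent, does not actually bypass that work.
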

\begin{proof}
We will use several relations from~\cite[Sections~6 and 7]{BG_periodic2}, which
apply to the process killed whenever $r_t$ leaves the interval $(-2\delta,0)$.
For $\ell\in\N$ and $s\in[0,1)$, let 
\begin{align}
\nonumber
Q_{\widetilde\Delta}(0,\ell+s) &= 
\bigprobin{0,-\delta}{\ph_{\tau_0} \in 
[\ell+s,\ell+s+\widetilde\Delta]} \\
&= \bigprobin{0,-\delta}{\theta_\delta(\ph_{\tau_0}) \in 
[t,t+\Delta]}\;,
\label{pG_u05:1} 
\end{align}
where $t=\theta_\delta(\ell+s)=\ell\lambda_+T_++\theta_\delta(s)$ and 
$\Delta=\theta'_\delta(s)\widetilde\Delta+\Order{\widetilde\Delta^2}$. 

By~\cite[(7.14) and (7.18)]{BG_periodic2}, we also have 
\begin{align}
\nonumber
 Q_{\widetilde\Delta}(0,\ell+s) 
 &= C(\sigma)\widetilde\Delta \theta'_\delta(s)
A\bigl(t-|\log\sigma|+\Order{\delta\ell}\bigr)
\bigl[1+\Order{\widetilde\Delta^\beta}\bigr] \\
&= C(\sigma)\Delta
A\bigl(t-|\log\sigma|+\Order{\delta\ell}\bigr)
\bigl[1+\Order{\Delta^\beta}\bigr] 
 \label{pG_u05:2}
\end{align} 
for some constants $C(\sigma),0<\beta<1$, where
$A(t)=\e^{-2t-\frac12\e^{-2t}}$. It follows that 
\begin{equation}
 \label{pG_u05:21}
 \bigprobin{0,-\delta}{\theta_\delta(\ph_{\tau_0}) - |\log\sigma| \in 
[t,t+\Delta]}
= C(\sigma)\Delta
A\bigl(t+\Order{\delta\ell}\bigr)
\bigl[1+\Order{\Delta^\beta}\bigr]\;.
\end{equation} 
We now apply Lemma~\ref{lem_pGu}, where $X_0$ is a random variable with density
proportional to $A(t-|\log\sigma|+\Order{\delta\ell})
[1+\Order{\Delta^\beta}]$, and $X$ is the random variable
$\theta_\delta(\ph_{\tau_0})-|\log\sigma|$, conditional on
$\tau_0<\tau_{-2\delta}$,
i.e.\ $\prob{X\in B} =
\pcond{\theta_\delta(\ph_{\tau_0})-|\log\sigma|\in B}{\tau_0<\tau_{-2\delta}}$. 
We choose cut-offs $a=-\frac12\log(\log\Delta^{-1})$ and $b=\log(\Delta^{-1})$. 
This guarantees, on the one hand, that $\prob{X_0\not\in[a,b]} =
\Order{\Delta^{1/2}}$. On the other hand, $|A'(x)/A(x)|$ is bounded above by
$\Order{\log\Delta^{-1}}$ on $[a,b]$, which allows to check that 
Condition~\eqref{pG_u01} holds. It follows that 
\begin{align}
\nonumber
 \bigecondin{0,-\delta}
 {\e^{\icx\eta(\theta_\delta(\ph_{\tau_0})-\abs{\log\sigma})}}
 {\tau_0 < \tau_{-2\delta}} 
 ={}& 
 \widetilde C(\sigma) \int_{\theta_\delta(0)-\abs{\log\sigma}}^\infty 
 \e^{\icx\eta t} A(t) \6t \bigl[1+\Order{\delta}+\Order{\Delta^\beta}\bigr] \\
 &{} + \Order{\Delta^{1/2\wedge\beta}} + 
 \cO \biggl( \frac{|\eta|\Delta}{2} \biggr)\;,
\label{pG_u05:3}
\end{align} 
where $\widetilde C(\sigma)=C(\sigma)/\prob{\tau_0<\tau_{-2\delta}}$. 
The change of variables $v=\frac12\e^{-2t}$ yields 
\begin{align}
\nonumber
 \int_{\theta_\delta(0)-\abs{\log\sigma}}^\infty \e^{\icx\eta t} A(t) \6t
 &= 2^{-\icx\eta/2} \int_0^{\e^{-2\theta_\delta(0)}/2\sigma^2}
 v^{-\icx\eta/2} \e^{-v} \6v \\
 &= 2^{-\icx\eta/2} \Gamma \biggl( 1 - \frac{\icx\eta}{2} \biggr) 
 + \Order{\e^{-\Order{\delta^2/\sigma^2}}}\;.
 \label{pG_u05:4}
\end{align} 
Plugging this into~\eqref{pG_u05:3}, taking first the limit $\sigma\to0$ and
then the limit $\Delta\to0$ shows that $\widetilde C(0)=1+\Order{\delta}$ (by
evaluating in $\eta=0$) and proves~\eqref{pG_u04}.
\end{proof}

Note that the limit as $\delta\to0$ of the right-hand side of~\eqref{pG_u04}
is the characteristic function of $(Z-\log2)/2$, where $Z$ is a standard Gumbel
random variable. 


\subsection{Large deviations and dynamics far from the unstable orbit}
\label{ssec_pG_ldp}

We consider in this section the dynamics up to the time 
\begin{equation}
 \label{pG_ldp01}
 \tau_{-\delta} = \inf\setsuch{t>0}{r_t = -\delta}
\end{equation} 
the process enters a $\delta$-neighbourhood of the unstable periodic orbit. 
We will choose a sequence $(\delta_m)_{m\in\N}$, converging to zero as
$m\to\infty$, such that the optimal path $\gamma_\infty$ crosses the level
$-\delta_m$ when $\ph=m$. It follows from the behaviour of the
Hamiltonian~\eqref{ldp05} near the unstable orbit
(cf.~\cite[Section~4]{BG_periodic2}) that 
\begin{equation}
 \label{pG_ldp02}
 |\log\delta_m| = m\lambda_+T_+ + \Order{\e^{-m\lambda_+T_+}}
\end{equation} 
as $m\to\infty$, which shows that 
$\delta_m \simeq \e^{-m\lambda_+T_+}$ for large $m$. 

We wish to compute the rate function $I_m(\ph)$, corresponding to sample paths
reaching $r=-\delta_m$ near a given value of $\ph$. 

\begin{figure}
\centerline{\includegraphics*[clip=true,height=75mm]{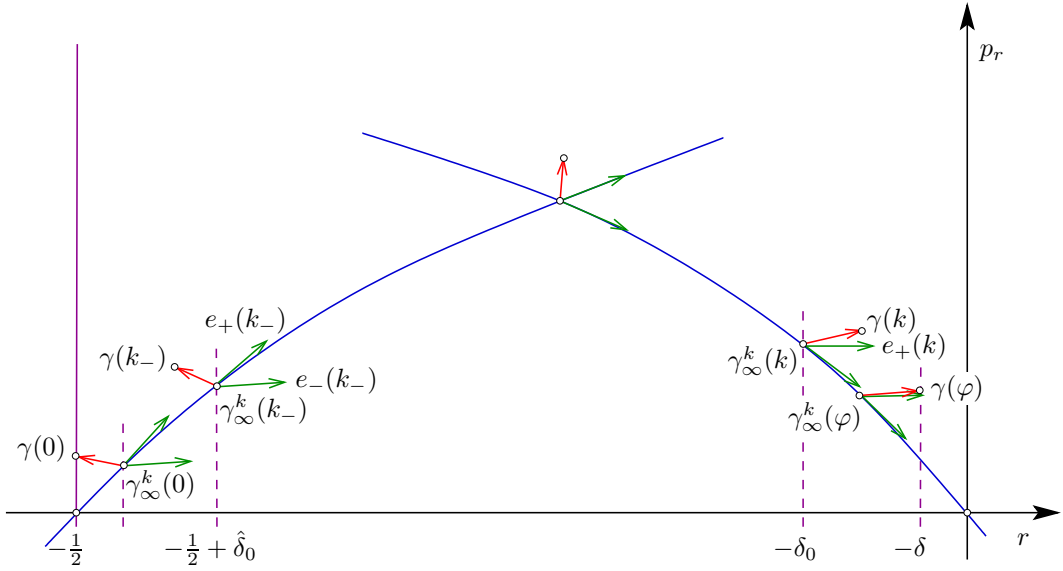}}
 \figtext{ 
	\writefig	0.9	0.55	$-\frac12$
	\writefig	2.45	0.55	$-\frac12+\hat\delta_0$
	\writefig	10.55	0.55	$-\delta_0$
	\writefig	12.15	0.55	$-\delta$
	\writefig	13.8	0.7	$r$
	\writefig	13.3	7.2	$p_r$
	\writefig	1.95	1.4	$\gamma^k_\infty(0)$
	\writefig	3.2	2.4	$\gamma^k_\infty(k_-)$
	\writefig	9.95	3.05	$\gamma^k_\infty(k)$
	\writefig	10.75	2.3	$\gamma^k_\infty(\ph)$
	\writefig	4.2	2.8	$e_-(k_-)$
	\writefig	3.0	3.6	$e_+(k_-)$
	\writefig	12.0	3.25	$e_+(k)$
	\writefig	0.5	1.9	$\gamma(0)$
	\writefig	1.6	3.1	$\gamma(k_-)$
	\writefig	11.8	3.6	$\gamma(k)$
	\writefig	12.65	2.7	$\gamma(\ph)$
 }
\caption[]{Construction of the optimal path $\set{\gamma(s)}_{0\leqs
s\leqs\ph}$ for a first exit at $\ph_{\tau_0}=\ph$. The path is constructed as a
perturbation of the translate $\gamma^k_\infty(s)=\gamma_\infty(s_k)$ of the
path minimizing the rate function in arbitrary time, which lies at the
intersection of the unstable and stable manifolds of the periodic orbits.  
}
\label{fig_ldp_section}
\end{figure}

\begin{prop}
\label{prop_pG_ldp1}
There exists a periodic function $P(\ph)$, reaching its maximum if and only
if $\ph\in\Z$, such that 
\begin{equation}
 \label{pG_ldp03}
 I_m \biggl( \hat\ph + \frac{|\log\delta_m|}{\lambda_+T_+} \biggr)
 = I_\infty - P(\hat\ph) \delta_m^2 
 + \Order{\delta_m^4 + \e^{-2\lambda_+T_+\hat\ph}}\;.
\end{equation} 
\end{prop}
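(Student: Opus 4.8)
The plan is to read $I_m(\ph)$ as a Hamiltonian action and to expand it perturbatively about the family of translates of $\gamma_\infty$. Along any minimiser of the rate function one has $H(\gamma,\eta)\equiv0$, so $I_{[0,T]}(\gamma)=\int_\gamma(p_r\,\6r+p_\ph\,\6\ph)$; hence $I_m(\ph)$ equals the Hamiltonian action of the optimal path joining the stable orbit to the point $(-\delta_m,\ph)$. By the transversality assumption on $\gamma_\infty$ and the hyperbolicity of the two periodic orbits of the Hamiltonian flow, this optimal path is a small perturbation of a translate $\gamma_\infty(\cdot+j)$, $j\in\Z$; since $\gamma_\infty(\cdot+j)$ crosses the level $\set{r=-\delta_m}$ at the phase $\ph=m-j$, the ``natural'' crossing phases fill the integer lattice, which is the source of the periodicity of $P$ and of the location of its maxima.

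First I would cut each candidate path at a fixed intermediate level $\set{r=-\delta_0}$, with $\delta_m\ll\delta_0\ll1$ and $\delta_0$ independent of $m$. The portion from the stable orbit up to $\set{r=-\delta_0}$ contributes the same value $J$ to the action for every translate, up to errors exponentially small in the number of revolutions spent near the stable orbit; this reduces the problem to the portion between $\set{r=-\delta_0}$ and $(-\delta_m,\ph)$, which lies in the region governed by the Hamiltonian normal form near the unstable orbit from \cite[Section~4]{BG_periodic2}. In the scaling coordinates of Section~\ref{ssec_cycling} (i.e.\ $r=[2\lambda_+T_+\hper(\ph)]^{1/2}y$, together with the associated reparametrisation of $\ph$), the linearised Hamiltonian on $\set{H=0}$ has stable manifold $p_y=-2\lambda_+y$ (the outgoing deterministic branch being $p_y=0$), and the action along it between levels $y_1$ and $y_2$ equals $\int p_y\,\6y=\lambda_+(y_1^2-y_2^2)$. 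A path tracking a translate exactly --- which happens precisely when the prescribed crossing phase is an integer --- rides this stable manifold, so that $I_m(m)=J+\lambda_+(y_{\delta_0}^2-y_{\delta_m}^2)=I_\infty-\lambda_+y_{\delta_m}^2$, with $y_{\delta_m}^2=\delta_m^2/(2\lambda_+T_+\hper(m))=\delta_m^2/(2\lambda_+T_+\hper(0))$ by $1$-periodicity of $\hper$; this fixes $P(\hat\ph)=1/(2T_+\hper(0))$ for $\hat\ph\in\Z$.

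For a non-integer prescribed phase, the optimal path must detach from the stable manifold of the unstable orbit near its endpoint in order to reach $(-\delta_m,\ph)$ rather than the natural point on that manifold. Since each translate $\gamma_\infty(\cdot+j)$ is a critical point of the action with the relevant boundary conditions, the excess action is a nondegenerate quadratic form in the endpoint mismatch; from the linearised flow near the hyperbolic orbit the mismatch required to hit a prescribed phase at level $\set{r=-\delta_m}$ is of size $\Order{\delta_m}$, uniformly, so the excess action is $\Order{\delta_m^2}$ with a coefficient which, by periodicity of the system in $\ph$ and of the lattice of natural crossings, is a $1$-periodic function of $\hat\ph=\ph-|\log\delta_m|/(\lambda_+T_+)$, strictly positive off $\Z$ and vanishing on $\Z$. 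This yields the claimed identity with $P$ periodic and $\max P$ attained exactly on $\Z$. The remainder $\Order{\delta_m^4}$ comes from the cubic and higher terms of the normal form, whose net contribution at order $\delta_m^3$ cancels because of the calibration of $\delta_m$ to $\gamma_\infty$ and because the argument of $I_m$ is shifted by $|\log\delta_m|/(\lambda_+T_+)$ rather than by $m$; the remainder $\Order{\e^{-2\lambda_+T_+\hat\ph}}$, which matters only for large $\hat\ph$ (the regime in which this proposition is used later), reflects the exponentially fast convergence of the true stable manifold of the unstable orbit to its linearisation, the exponent being twice the transverse Floquet exponent $\lambda_+T_+$ because the action depends quadratically on the transverse coordinate.

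The hard part is the uniform perturbative control of the optimal path around the translates $\gamma_\infty(\cdot+j)$: showing that the second variation is uniformly nondegenerate, identifying its periodic coefficient with $P$, and --- above all --- verifying that the cubic term does not spoil the $\Order{\delta_m^4}$ error, which is exactly what forces the precise normalisation of $\delta_m$ and of the phase argument. This is the counterpart here of the Hamiltonian bookkeeping already carried out in \cite[Sections~4--5]{BG_periodic2}, and I would organise the estimates as there, localising everything to a bounded set, which the large-deviation upper bounds permit.
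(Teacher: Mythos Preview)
Your strategy is close to the paper's: perturb about a translate $\gamma_\infty^k=\gamma_\infty(\cdot-k)$, split the path at an intermediate level $\set{r=-\delta_0}$, and exploit the Hamiltonian normal form near the unstable orbit. But the central computation is missing, and two of your heuristics point in the wrong direction.

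The paper does \emph{not} argue that at integers one rides $\gamma_\infty$ exactly and off integers one pays a positive ``endpoint mismatch'' penalty. Instead it writes the perturbed path as $\gamma(s)=\gamma_\infty^k(s)+a_se_+(s)+b_se_-(s)$, computes the action contribution on each of the three segments $[0,k_-]$, $[k_-,k]$, $[k,\ph]$, and then \emph{minimises} the resulting quadratic form
\[
I^1(0,\ph)-I_\infty = A\,\hper(\ph)\,x_+^2 + B\,x_-^2 - C\,\delta_m\,x_+ + \Order{x_+^4+x_-^4+\delta_m^4}
\]
over $x_+=\e^{-\alpha(\ph,k)}$ subject to the constraint $x_+x_-\e^{-\alpha_0}=\e^{-\alpha(\ph,0)}$. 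The minimiser is $x_+=\tfrac{C}{2A\hper(\ph)}\delta_m$, giving $P(\hat\ph)=\tfrac{C^2}{4A\hper(\hat\ph)}$ directly. So the $\hat\ph$-dependence of $P$ comes from the periodic function $\hper$ entering the boundary condition at the unstable side, not from a detachment cost that ``vanishes on $\Z$''. Your sketch never performs this optimisation, and your picture of $P$ as ``base value minus a nonnegative excess vanishing at integers'' is not what the computation produces.

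Second, your attribution of the error $\Order{\e^{-2\lambda_+T_+\hat\ph}}$ to ``convergence of the true stable manifold of the unstable orbit to its linearisation'' is not the mechanism. In the paper this error is precisely the $Bx_-^2$ term left over after minimisation: it encodes the cost of leaving the \emph{stable} orbit in finitely many revolutions, and its size is controlled through the constraint via $\e^{-\alpha(\ph,0)}=\Order{\delta_m\e^{-\lambda_+T_+\hat\ph}}$. Your earlier remark (``exponentially small in the number of revolutions spent near the stable orbit'') is closer to the truth than your final explanation; either way, the error is produced by the two-sided boundary conditions and the constraint between $x_+$ and $x_-$, not by normal-form remainders near the unstable orbit. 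Finally, the absence of a $\delta_m^3$ term is not due to a cancellation forced by the calibration of $\delta_m$: it simply reflects that the leading expansion is quadratic in $(x_+,x_-,\delta_m)$ with quartic remainders, and the optimum has $x_+=\Order{\delta_m}$.
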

\begin{proof}
The proof is based on similar considerations as
in~\cite[Section~4]{BG_periodic2}. We will construct a path $\gamma$ minimising
the rate function by perturbation of a translate of the optimal path
$\gamma_\infty$. This is justified by our assumption that $\gamma_\infty$ is a
unique minimizer for transitions in arbitrary time, up to translations.

We fix a small constant $\delta_0$, such that $\gamma_\infty$ reaches level
$-\delta_0$ for an integer value of $\ph$. Without loss of generality, we may
assume that $\gamma_\infty(0)=-\delta_0$. Then we can also find an integer
$\ell$ and a $\hat\delta_0 \leqs \delta_0$ such that
$\gamma_\infty(-\ell)=-1/2+\hat\delta_0$. The translate $\gamma_\infty^k =
\gamma_\infty(\cdot-k)$ crosses level $-\delta_0$ at $\ph=k$ and level
$-1/2+\hat\delta_0$ at $\ph=k_-:=k-\ell$. 

The Hamiltonian flow of~\eqref{ldp05} can be viewed as a time-dependent flow for
$(r,p_r)$ in which $\ph$ plays the r\^ole of
time~\cite[Section~2.2]{BG_periodic2}. We denote by $\pm\mu(s)$ the eigenvalues
of the linearised flow at $\gamma_\infty^k(s)$. Let 
\begin{equation}
 \label{pG_ldp04:1}
 \alpha(s,s_0) = \int_{s_0}^s \mu(u)\6u\;.
\end{equation} 
The principal solution $U(s,s_0)$ of the linearised flow has eigenvalues
$\e^{\pm\alpha(s,s_0)}$. We denote by $e_\pm(s)$ the associated eigenvectors, 
which satisfy $U(s,s_0) e_\pm(s_0)=\e^{\pm\alpha(s,s_0)}e_\pm(s)$. 
Consider a perturbed path given by 
\begin{equation}
 \label{pG_ldp04:2}
 \gamma(s) = \gamma_\infty^k(s) + a_se_+(s) + b_se_-(s)
\end{equation} 
(\figref{fig_ldp_section}). Then we have 
\begin{align}
\nonumber
a_s &= a_k\e^{\alpha(s,k)} + \Order{\norm{a^2+b^2}_\infty}\;, \\
b_s &= b_k\e^{-\alpha(s,k)} + \Order{\norm{a^2+b^2}_\infty}\;,
\label{pG_ldp04:3} 
\end{align}
where $\norm{\cdot}_\infty$ denotes the supremum over $[0,\ph]$. 

Given an interval $[u,s]$, let $I^0(u,s)$ denote the contribution of $[u,s]$ to
the integral defining the rate function of $\gamma_\infty^k$
(cf.~\eqref{ldp01}). Let $I^1(u,s)$ denote its analogue for the rate function of
$\gamma$. Then a computation similar to the one
in~\cite[Proposition~4.1]{BG_periodic2} shows that, up to a multiplicative error
$1+\Order{\delta_0}$, 
\begin{equation}
 \label{pG_ldp04:4}
 I^1(k,\ph) - I^0(k,\infty) = -\frac{\delta_0^2}{2}
\frac{\hper(\ph)}{\hper(0)^2}x_+^2 - \delta_0 b_k +
\Order{b_k^2,b_kx_+^2}\;,
\end{equation} 
where $x_+ = \e^{-\alpha(\ph,k)}$. In addition, the boundary condition
$\gamma(\ph)\in\set{r=-\delta_m}$ yields 
\begin{equation}
 \label{pG_ldp04:5}
 a_k = \delta_0 \frac{\hper(\ph)}{\hper(0)}x_+^2 - \delta_m x_+ 
 + \Order{b_kx_+^2}\;.
\end{equation} 
A similar analysis can be made in the vicinity of the stable periodic orbit,
and yields
\begin{equation}
 \label{pG_ldp04:6}
 I^1(0,k_-) - I^0(-\infty,k_-) = -\frac{\hat\delta_0^2}{2}
\frac{1}{\hper_-(0)}x_-^2 + \hat\delta_0 a_k\e^{-\alpha(k,k_-)} +
\Order{a_k^2,a_kx_-^2}\;,
\end{equation} 
where $x_-=\e^{-\alpha(k_-,0)}$, and $\hper_-$ is a periodic function related
to the linearisation at the stable orbit. Note that $\alpha(k,k_-)=:\alpha_0$
does not depend on $k$, but only on the time $\ell$ it takes for the optimal
path $\gamma_\infty$ to go from $-1/2+\hat\delta_0$ to $-\delta_0$. The boundary
condition $\gamma(0)\in\set{r=-1/2}$ yields the condition 
\begin{equation}
 \label{pG_ldp04:7}
 \e^{\alpha_0}b_k = -\hat\delta_0 x_-^2  + \Order{a_kx_-^2}\;.
\end{equation} 
Finally, the transition between times $k_-$ and $k$ yields 
\begin{equation}
 \label{pG_ldp04:8}
 I^1(k,k_-) - I^0(k,k_-) 
 = a_k c_+ + b_k c_- + \Order{a_k^2+b_k^2}\;,
\end{equation} 
where the constants $c_\pm$ depend only on the behaviour of
$\gamma_\infty^k$ on $[k_-,k]$, and are thus independent of $k$. Adding the
estimates~\eqref{pG_ldp04:4}, \eqref{pG_ldp04:6} and~\eqref{pG_ldp04:8}, and
using the boundary conditions, we obtain
\begin{equation}
 \label{pG_ldp04:9}
 I^1(\ph,0) - I_\infty = A\hper_+(\ph)x_+^2 + Bx_-^2 - C\delta_m x_+
 + \Order{x_+^4+x_-^4+\delta_m^4}
\end{equation} 
for some positive constants $A, B, C$. The optimal rate function is obtained by
minimizing~\eqref{pG_ldp04:9} under the constraint
$x_-x_+\e^{-\alpha_0}=\e^{-\alpha(\ph,0)}$. The result is that the minimum is
reached when 
\begin{equation}
 \label{pG_ldp04:10}
 x_+ = \frac{C}{2A\hper(\ph)} \delta_m 
 \bigl[ 1 + \Order{\delta_m^2} + \Order{\delta_m^{-4}\e^{-\alpha(\ph,0)}}
\bigr]\;,
\end{equation}
and has value 
\begin{equation}
 \label{pG_ldp04:11}
 I_m(\ph) - I_\infty = -\frac{C^2}{4A\hper(\ph)}\delta_m^2 
 + \Order{\delta_m^4 + \delta_m^{-2}\e^{-2\alpha(\ph,0)}}\;.
\end{equation} 
Noting that for large $m$, $\alpha(\ph,0) - \lambda_+T_+\ph$ 
is bounded below, which implies
$\e^{-\alpha(\ph,0)}=\Order{\delta_m\e^{-\lambda_+T_+\hat\ph}}$, finishes the
proof. 
\end{proof}


\subsection{Final steps of the proof}
\label{ssec_pG_proof}

Consider the random variable 
\begin{equation}
 \label{pG_p00}
 \hat\ph_m = \ph_{\tau_{-\delta_m}} -
\frac{|\log\delta_m|}{\lambda_+T_+}\;.
\end{equation} 
Note that~\eqref{pG_ldp02} implies $\hat\ph_m-\ph_{\tau_{-\delta_m}} +
m\to0$ as $m\to\infty$, meaning that asymptotically, $\hat\ph_m$ is just an
integer shift of $\ph_{\tau_{-\delta_m}}$. We introduce the integer-valued
random variable 
\begin{equation}
 \label{pG_p01}
 Y_m^\sigma = \biggintpart{\hat\ph_m + \frac12}\;,
\end{equation} 
which has the property 
\begin{equation}
 \label{pG_p02}
 \bigprob{Y_m^\sigma = n}
 = \biggprob{\hat\ph_m \in [n-\frac12,n+\frac12)}\;,
\end{equation} 
that is, $Y_m^\sigma$ counts the number of periods until the process reaches the
unstable orbit. The same argument as the one yielding~\eqref{rpm07} shows that
the random variable $Y_m^\sigma$ is asymptotically geometric. The
large-deviation principle implies that the success probability is of order
$\e^{-I_m/\sigma^2}$, with $I_m=I_\infty-P(0)\delta_m^2+\Order{\delta_m^4}$.

\begin{prop}
\label{prop_pG_p1}  
We have 
\begin{equation}
 \label{pG_p03}
 \lim_{m\to\infty} \Bigl(
 \lim_{\sigma\to0} \bigprob{ \bigl|\hat\ph_m  - Y_m^\sigma \bigr| > \eta }
 \Bigr)
 = 0 \;,
\end{equation} 
for all $\eta>0$. Therefore, $\hat\ph_m-Y_m^\sigma$
converges in distribution to $\delta_0$, the Dirac mass at $0$. 
\end{prop}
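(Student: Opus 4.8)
The plan is to read the statement off from the large-deviation concentration of the first-passage location at the minimisers of the rate function $I_m$ computed in Proposition~\ref{prop_pG_ldp1}, using the substochastic-Markov-chain structure behind $Y_m^\sigma$ to cope with the fact that these minimisers form a periodic family rather than a single isolated point. First I would note that $Y_m^\sigma=\intpart{\hat\ph_m+\frac12}$ is the integer nearest to $\hat\ph_m$, so $\hat\ph_m-Y_m^\sigma\in[-\frac12,\frac12)$ and it suffices to treat $0<\eta<\frac12$. By~\eqref{pG_ldp02} we have $|\log\delta_m|/(\lambda_+T_+)=m+\Order{\e^{-m\lambda_+T_+}}$, so up to a shift tending to $0$ as $m\to\infty$ the event $\set{|\hat\ph_m-Y_m^\sigma|>\eta}$ says that the fractional part of the crossing phase $\ph_{\tau_{-\delta_m}}$ stays at distance more than $\eta$ from $\Z$, and $\bigprob{|\hat\ph_m-Y_m^\sigma|>\eta}=\sum_n\bigprob{Y_m^\sigma=n}\,q_n^\sigma$ with $q_n^\sigma=\bigpcond{|\hat\ph_m-n|>\eta}{Y_m^\sigma=n}$.

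Next I would analyse $q_n^\sigma$. By the same spectral argument that produced~\eqref{rpm05}--\eqref{rpm07}, applied now to the Poincar\'e chain killed on reaching $r=-\delta_m$, conditioning on $\set{Y_m^\sigma=n}$ is, up to a relative error $\Order{\rho^n}$ from the spectral gap, the same as conditioning that chain on surviving $n$ steps; hence the conditional law of the fractional part $\hat\ph_m-n\in[-\frac12,\frac12)$ is a profile $g_\sigma$ independent of $n$, namely the within-period exit distribution started from the quasistationary distribution near $r=-\delta_m$. By the large-deviation upper and lower bounds of~\cite{BG_periodic2}, $g_\sigma$ is, at leading exponential order, proportional to $\exp\set{-[J_m(s)-\min_u J_m(u)]/\sigma^2}$, where $J_m(s)$ is the rate function governing $\set{\hat\ph_m\in s+\6s}$; by Proposition~\ref{prop_pG_ldp1} and periodicity of $P$, the difference $J_m(s)-J_m(0)$ for $s\in[-\frac12,\frac12)$ equals $[P(0)-P(s)]\delta_m^2+\Order{\delta_m^4+\e^{-2\lambda_+T_+n}}$.

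Since $P$ attains its maximum precisely on $\Z$, there is $\kappa(\eta)>0$ with $P(0)-P(s)\ge\kappa(\eta)$ for $\eta\le|s|\le\frac12$; hence, once $m$ and $n$ are large enough that the error terms are subdominant to $\delta_m^2$, one has $J_m(s)-\min_u J_m(u)\ge\tfrac12\kappa(\eta)\delta_m^2$ there, while $J_m$ stays within a small increment of its minimum on a fixed small interval around $0$. A Laplace estimate then gives $q_n^\sigma\le C_m\,\e^{-\kappa(\eta)\delta_m^2/(4\sigma^2)}$ uniformly over $n\ge N_0(m)$, with $C_m$, $N_0(m)$ depending on $m$ only, whereas trivially $q_n^\sigma\le1$ for the finitely many smaller $n$. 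Summing, $\bigprob{|\hat\ph_m-Y_m^\sigma|>\eta}\le\bigprob{Y_m^\sigma<N_0(m)}+C_m\,\e^{-\kappa(\eta)\delta_m^2/(4\sigma^2)}$. The first term tends to $0$ as $\sigma\to0$ because $Y_m^\sigma$ is asymptotically geometric with success probability of order $\e^{-I_m/\sigma^2}\to0$, so $Y_m^\sigma\to\infty$ in probability; the second tends to $0$ as $\sigma\to0$ since $\delta_m$ is a fixed positive constant. This already yields $\lim_{\sigma\to0}\bigprob{|\hat\ph_m-Y_m^\sigma|>\eta}=0$ for each fixed $m$, hence~\eqref{pG_p03}, and the last assertion is the definition of convergence in distribution to the Dirac mass $\delta_0$.

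The step I expect to be the main obstacle is the matching large-deviation \emph{lower} bound underpinning the claim that the within-period profile $g_\sigma$ really carries mass of order $\e^{-I_m/\sigma^2}$ near the integer point --- equivalently, that the quasistationary distribution of the killed chain does not asymptotically starve the configuration from which the optimal path $\gamma_\infty$ leaves the stable orbit; this has to be imported from the fine estimates of~\cite{BG_periodic2} on the transition kernel near the unstable orbit and on the spectral gap. A secondary, bookkeeping difficulty is that the error $\Order{\e^{-2\lambda_+T_+\hat\ph}}$ in Proposition~\ref{prop_pG_ldp1} is controlled only for large $\hat\ph$, so uniformity over small $n$ has to be argued separately; this is harmless because exit within a bounded number of periods has probability $\to0$ as $\sigma\to0$, and in any case the outer limit $m\to\infty$ absorbs any residual loss.
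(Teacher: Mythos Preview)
Your argument is correct and follows the paper's line: split on whether $Y_m^\sigma$ exceeds a threshold, dispose of the small-$Y_m^\sigma$ part by the large-deviation principle (its probability is $\Order{N_0(m)\e^{-c/\sigma^2}}\to0$), and on the complement use Proposition~\ref{prop_pG_ldp1} to see that the per-period minimiser of the rate function sits within $\Order{\delta_m^2}$ of the integer, so large-deviation concentration forces $|\hat\ph_m-Y_m^\sigma|$ to be small. The paper simply takes $N_0(m)=2m$, which guarantees $\e^{-n\lambda_+T_+}=\Order{\delta_m^2}$ for $n>2m$, and is otherwise the same.

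Two differences are worth noting. First, the paper is leaner: it does not pass through the spectral/quasistationary argument you use to make the within-period profile $g_\sigma$ asymptotically independent of $n$. It applies the large-deviation upper and lower bounds directly to the events $\{\hat\ph_m\in[n-\tfrac12,n+\tfrac12)\setminus[n-\eta,n+\eta]\}$ and $\{\hat\ph_m\in[n-\tfrac12,n+\tfrac12)\}$, whose ratio goes to $0$ once the rate-function minimiser in that window lies inside $[n-\eta,n+\eta]$. Your route has the virtue of making the matching lower bound explicit, but the quasistationary machinery is not needed for this proposition.

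Second, your intermediate claim that $\lim_{\sigma\to0}\bigprob{|\hat\ph_m-Y_m^\sigma|>\eta}=0$ already for \emph{each} fixed $m$ is a bit too strong: since the per-period minimiser sits at $n+\Order{\delta_m^2}$ rather than exactly at $n$, for $\eta$ below that scale the concentration point is outside $[n-\eta,n+\eta]$. The paper accordingly only bounds the first term after $\sigma\to0$ by $1_{\set{\eta<\Order{\delta_m^2}}}$, and then uses the outer limit $m\to\infty$ to kill it. Your own closing remark that the outer limit $m\to\infty$ absorbs any residual loss is exactly what is required, so the proposition still follows.
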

\begin{proof}
Proposition~\ref{prop_pG_ldp1} implies that the rate function $I_m(\hat\ph)$ has
at most one minimum in $[n-1/2,n+1/2)$, satisfying
$\hat\ph=n+\Order{\delta_m^2}+\Order{\e^{-2n\lambda_+T_+}}$. 
We decompose 
\begin{equation}
 \label{pG_p04:1}
 \bigprob{ \bigl|\hat\ph_m  - Y_m^\sigma \bigr| > \eta }
 \leqs \bigprob{ \bigl|\hat\ph_m  - Y_m^\sigma \bigr| > \eta , Y_m^\sigma > 2m}
 + \bigprob{ Y_m^\sigma \leqs 2m}\;.
\end{equation} 
If $Y_m^\sigma > 2m$, then $\e^{-Y_m^\sigma\lambda_+T_+} =
\Order{\delta_m^2}$. Thus as $\sigma\to0$, the first term on the right-hand
side becomes bounded by $1_{\set{\eta<\Order{\delta_m^2}}}$, which converges to
$0$ as $m\to\infty$. By the large-deviation principle, the second term on the
right-hand side has order $2m\e^{-c/\sigma^2}$ for some $c>0$, which goes to
zero as
$\sigma\to0$. 
\end{proof}

By~\eqref{pG_p00} and the definition~\eqref{cycling06} of
$\theta_\delta$ we have
\begin{equation}
 \label{pG_p05}
 \theta(\ph_{\tau_0}) - |\log\sigma| - \lambda_+T_+Y_m^\sigma 
 = \bigl[\theta_{\delta_m}(\ph_{\tau_0}) - |\log\sigma| -
\lambda_+T_+\ph_{\tau_{-\delta_m}} \bigr]
 + \lambda_+T_+(\hat\ph_m - Y_m^\sigma)\;.
\end{equation} 
By L\'evy's continuity theorem, Proposition~\ref{prop_pG_u1} and periodicity,
conditionally on $\tau_0 < \tau_{-2\delta_m}$, the term in square brackets
converges in distribution to $(Z-\log2)/2$ as $\sigma\to0$ and $m\to\infty$ (in
that order). The second term on the right-hand side converges to $0$, as we have
just seen. This proves the result conditionally on $\tau_0 < \tau_{-2\delta_m}$.
The result remains true unconditionally because if 
$\hat\tau_{-2\delta_m} = \inf\setsuch{t>\tau_{-\delta_m}}{r_t=-2\delta_m}$,
then 
\begin{equation}
 \label{pG_p06}
 \bigpcond{\hat\tau_{-2\delta_m}<\tau_0}{\tau_0\in B} =
\Order{\e^{-c\delta_m^2/\sigma^2}}
\end{equation} 
for some constant $c>0$, as a consequence
of~\cite[Proposition~4.2]{BG_periodic2}. This reflects the fact that it is more
expensive, in terms of rate function, to move back and forth between levels
$-\delta_m$ and $-2\delta_m$ before reaching the unstable orbit, than to go
directly from level $-\delta_m$ to $0$ (see also the renewal equation 
in~\cite{BG7}). 
\qed


\section{Duration of phase slips}
\label{sec_proof_duration}


\subsection{Proof of Theorem~\ref{thm_exit}}
\label{ssec_proof_exit}


We will start by characterising the exit distribution from a small strip of
width of order $h_0=\sigma^\gamma$ around the unstable orbit. 
We set 
\begin{equation}
 \label{px01}
 \tilde\tau_{\pm h_0} = \inf\bigsetsuch{t>0}{r_t = \pm
h_0\sqrt{2\lambda_+T_+\hper(\ph_t)}}\;.
\end{equation} 
The following result is an adaptation of~\cite[Theorem~2.1]{Day2} to the
nonlinear, $\ph$-dependent situation. 

\begin{lemma}
\label{lem_exit1}
Fix an initial condition $(\ph_0,0)$ and a constant $h_0=\sigma^\gamma$ for
$\gamma\in(1/2,1)$. Then the solution of~\eqref{exit03} satisfies 
\begin{equation}
 \label{px02}
 \lim_{\sigma\to0} 
 \Law \biggl( \theta(\ph_{\tilde\tau_{h_0}}) - \theta(\ph_0) -
 \log\biggl( \frac{h_0}{\sigma} \biggr) 
  \biggm| \tilde\tau_{h_0} < \tilde\tau_{-h_0} \biggr)
 = \Law \biggl( \Theta + \frac12\log(2\lambda_+)  \biggr) \;, 
\end{equation} 
where $\Theta=-\log|N|$ and $N$ is a standard normal random variable. 
\end{lemma}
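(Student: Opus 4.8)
The plan is to reduce the nonlinear, $\ph$-dependent dynamics near the unstable orbit to the explosive Ornstein--Uhlenbeck process~\eqref{cycling02}, and then to run the reflection-principle computation behind~\cite[Theorem~2.1]{Day2}, keeping track of the fact that here the exit level $h_0=\sigma^\gamma$ shrinks with $\sigma$.

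First I would apply the scaling $r=[2\lambda_+T_+\hper(\ph)]^{1/2}y$ together with the random time change $\6t=[\lambda_+T_+/\theta'(\ph_t)]\6s$ introduced in Section~\ref{ssec_cycling}. As there, this turns the $r$-component of~\eqref{exit03} into $\6y_s=\lambda_+y_s\6s+\sigma\6B_s+(\text{error})$, with $B$ a standard one-dimensional Brownian motion (because $\widetilde D_{rr}(0,s)=1$), and it turns the exit set $\set{\abs{r}=h_0\sqrt{2\lambda_+T_+\hper(\ph)}}$ into $\set{\abs{y}=h_0}$. Similarly $\6\theta(\ph_s)=\lambda_+T_+f_\ph(r_s,\ph_s)\6s+\Order{\sigma}\,(\text{martingale})+\Order{\sigma^2}\6s$, so that, using $f_\ph(0,\ph)=1/T_+$ and $r_s=\Order{h_0}$ inside the strip,
\begin{equation}
 \label{px_plan_a}
 \theta(\ph_{\tilde\tau_{h_0}})-\theta(\ph_0)
 = \lambda_+\tilde\tau_{h_0}+\Order{h_0\abs{\log\sigma}}+\Order{\sigma\abs{\log\sigma}^{1/2}}
 = \lambda_+\tilde\tau_{h_0}+\order{1}\;.
\end{equation}
The error in the $y$-equation comes from the $\Order{r^2}$ term in $f_r$ (a drift correction of order $h_0^2=\sigma^{2\gamma}=\order{\sigma}$ in the strip --- this is where $\gamma>\tfrac12$ enters) and from the $r$-dependence of $g_r$ and $D_{rr}$ (relative corrections of order $h_0$). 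A priori estimates of the type used in~\cite[Sections~4, 6 and 7]{BG_periodic2} and~\cite[Section~3.2.1]{BGbook} show that the process stays in the $h_0$-strip until $\tilde\tau_{h_0}\wedge\tilde\tau_{-h_0}$ with overwhelming probability, and that over the relevant horizon $s=\Order{\abs{\log\sigma}}$ these corrections accumulate only to $\order{1}$ deviations in the quantities of interest.

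It then suffices to analyse the model process $\6y_s=\lambda_+y_s\6s+\sigma\6B_s$ with $y_0=0$. Writing $y_s=\e^{\lambda_+s}\sigma\tilde y_s$ with $\tilde y_s=\int_0^s\e^{-\lambda_+u}\6B_u$ as in~\eqref{Bakhtin03}, the martingale $\tilde y_s$ converges, as $s\to\infty$, to $\tilde y_\infty\sim\mathcal{N}(0,1/(2\lambda_+))$, i.e.\ $\tilde y_\infty\stackrel{\mathrm{d}}{=}N/\sqrt{2\lambda_+}$ with $N$ standard normal. Since $\log(h_0/\sigma)=(1-\gamma)\abs{\log\sigma}\to\infty$ --- this is where $\gamma<1$ enters --- the exit level becomes large compared with $\sigma$, so $\tilde\tau_{h_0}\to\infty$ in probability; taking logarithms in the exit identity $h_0=\e^{\lambda_+\tilde\tau_{h_0}}\sigma\,\abs{\tilde y_{\tilde\tau_{h_0}}}$ and replacing $\tilde y_{\tilde\tau_{h_0}}$ by $\tilde y_\infty$ at the cost of a vanishing error (exactly as in~\cite{Day2,Bakhtin_2008_SPA}) yields, in law as $\sigma\to0$,
\begin{equation}
 \label{px_plan_b}
 \lambda_+\tilde\tau_{h_0}-\log\biggl(\frac{h_0}{\sigma}\biggr)
 \;\longrightarrow\; -\log\abs{N}+\tfrac12\log(2\lambda_+)
 = \Theta+\tfrac12\log(2\lambda_+)\;.
\end{equation}
Combining~\eqref{px_plan_a}, \eqref{px_plan_b} and Slutsky's lemma gives the unconditional form of~\eqref{px02}. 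For the conditioning, the model dynamics is invariant under $y\mapsto-y$, so by~\cite[Theorem~2.1]{Day2} the sign $\sign(y_{\tilde\tau_{h_0}\wedge\tilde\tau_{-h_0}})$ converges to a fair sign that is asymptotically independent of $\Theta$; the asymmetry terms dropped in the reduction are $\Order{h_0}\to0$ and do not affect this, so conditioning on $\set{\tilde\tau_{h_0}<\tilde\tau_{-h_0}}$ leaves the limiting law unchanged, which is~\eqref{px02}.

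The main obstacle is the error control in the reduction: one must show, uniformly over the horizon $s\sim\abs{\log\sigma}$, that the process genuinely stays in the $h_0$-strip with the required probability, and that the nonlinear drift, the non-constant diffusion coefficient and the noise in the $\ph$-equation accumulate only to $\order{1}$ errors in both $\tilde\tau_{h_0}$ and $\theta(\ph_{\tilde\tau_{h_0}})$. Because $h_0=\sigma^\gamma$ moves with $\sigma$, one cannot invoke Theorem~\ref{thm_Bakhtin_taub} directly; instead its short proof carries over with $b$ replaced by $h_0$ as soon as $\tilde\tau_{h_0}\to\infty$, which holds precisely because $\gamma<1$, while $\gamma>\tfrac12$ is what makes the nonlinear corrections negligible next to $\sigma$. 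These estimates are of the same nature as those in~\cite{BG_periodic2,BGbook} and in Day's and Bakhtin's work, and are expected to go through with routine but careful arguments.
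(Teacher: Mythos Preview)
Your proposal is correct and follows the same overall strategy as the paper: linearise near the unstable orbit to reduce to an explosive Ornstein--Uhlenbeck process, then run Day's endpoint-identity/logarithm argument, using $\gamma>\tfrac12$ to kill the nonlinear drift and $\gamma<1$ to ensure $\tilde\tau_{h_0}\to\infty$.

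The execution differs in one respect worth noting. You apply the transformation $r\mapsto y=r/\sqrt{2\lambda_+T_+\hper(\ph)}$ and the random time change $t\mapsto s$ from Section~\ref{ssec_cycling} \emph{before} running the argument, so that the exit set becomes $\{|y|=h_0\}$ and the diffusion is normalised to unit intensity. The paper instead works directly in the original $(r,t)$ coordinates: it introduces a high-probability event $\Omega_1$ on which $\ph_t$ stays $h_0$-close to the deterministic phase $\ph_0+t/T_+$, splits the diffusion as $g_r(r_t,\ph_t)=g_0(t)+g_1$ with $g_0(t)=g_r(0,\ph_0+t/T_+)$, and then computes the variance $v_\infty=\int_0^\infty\e^{-2\lambda_+s}D_{rr}(\ph_0+s/T_+)\6s=T_+\hper(\ph_0)$ explicitly, which is where the factor $\hper(\ph_0)$ arises and eventually cancels against the $\hper(\ph_{\hat\tau})$ from the exit level via the definition of $\theta$. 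The paper's route avoids the It\^o corrections generated by your change of variables and the subtlety that your time change depends on the noisy $\ph_t$; in exchange it must track the $\hper$-factors by hand. Its error control is carried out concretely through three events $\Omega_1,\Omega_2,\Omega_3$ (controlling the phase, the correction integral via a Bernstein-type bound, and the tail $\int_{\hat\tau}^\infty\e^{-\lambda_+s}g_0(s)\6W_s$ via Markov), rather than deferred to ``routine but careful arguments'' as in your last paragraph.
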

\begin{proof}
Let $\hat\tau = \tau_{h_0} \wedge \tau_{-h_0}$, and $\nu=\sign(r_{\hat\tau})$. 
We will introduce several events that have probabilities going to $1$ as
$\sigma\to0$. The first event is 
\begin{equation}
 \label{px03:1}
 \Omega_1 = \biggl\{ \biggl| \ph_t - \ph_0 - \frac{t}{T_+} \biggr| 
 \leqs M(h_0^2 t + h_0) 
 \quad\forall t \leqs \hat\tau \wedge \frac{1}{h_0} \biggr\}\;,
\end{equation} 
where $M$ is such that $|b_\ph(r,\ph)|\leqs Mr^2$ for all $(r,\ph)$.
Then~\cite[Proposition~6.3]{BG_periodic2} shows that 
\begin{equation}
 \label{px03:2}
 \fP(\Omega_1^c) \leqs \e^{-\kappa_1h_0/\sigma^2}
\end{equation} 
for a constant $\kappa_1>0$. On $\Omega_1$, the phase $\ph_t$ remains
$h_0$-close to $\ph_0+t/T_+$. Hence the equation for $r_t$ can be written as 
\begin{equation}
 \label{px03:3}
 \6r_t = \bigl[\lambda_+ r_t + b_r(r_t,\ph_t) \bigr] \6t 
 + \bigl[g_0(t)+g_1(r_t,\ph_t,t) \bigl] \6W_t
\end{equation} 
where $g_0(t)=g_r(0,\ph_0+t/T_+)$, and $g_1=\Order{|r|+h_0}$ on $\Omega_1$. 
The solution can be represented as 
\begin{equation}
 \label{px03:4}
 r_t = \e^{\lambda_+t} 
 \biggl[
 \sigma \int_0^t \e^{-\lambda_+s}g_0(s)\6W_s
 + \sigma \int_0^t \e^{-\lambda_+s}g_1(r_s,\ph_s,s)\6W_s
 + \int_0^t \e^{-\lambda_+s} b_r(r_s,\ph_s)\6s
 \biggr]\;.
\end{equation} 
Let $Y_t$ denote the second integral in~\eqref{px03:4}, and define 
\begin{equation}
 \label{px03:5}
 \Omega_2(t) = 
 \biggl\{
  \sup_{0\leqs s\leqs \hat\tau\wedge t} |Y_s| > H
 \biggr\}\;.
\end{equation}
Then the Bernstein-type estimate~\cite[Theorem~37.8]{RogersWilliams}
yields 
\begin{equation}
 \label{px03:6}
 \fP\bigl( \Omega_1\cap\Omega_2(t)^c \bigr) \leqs \e^{-\kappa_2 H^2/h_0^2}
\end{equation} 
for a constant $\kappa_2>0$, uniformly in $t$.
Evaluating~\eqref{px03:4} in $\hat\tau$, we obtain that on
$\Omega_1\cap\Omega_2(t)$, 
\begin{equation}
 \label{px03:7}
 \nu h_0\sqrt{2\lambda_+T_+\hper(\ph_{\hat\tau})} = \e^{\lambda_+\hat\tau}
 \biggl[
 \sigma \int_0^{\hat\tau} \e^{-\lambda_+ s} g_0(s) \6W_s
 + \Order{\sigma H + h_0^2}
 \biggr]\;.
\end{equation} 
Now we decompose 
\begin{equation}
 \label{px03:8}
 \int_0^{\hat\tau} \e^{-\lambda_+ s} g_0(s) \6W_s
 = \int_0^{\infty} \e^{-\lambda_+ s} g_0(s) \6W_s 
 - \int_{\hat\tau}^\infty \e^{-\lambda_+ s} g_0(s) \6W_s
 = \sqrt{v_\infty} N - R\;,
\end{equation} 
where $N$ is a standard normal random variable, and 
(cf.~\cite[(2.29)]{BG_periodic2})
\begin{equation}
 \label{px03:8B}
 v_\infty = \int_0^\infty \e^{-2\lambda_+ s} D_{rr}(\ph_0+s/T_+) \6s
 = T_+ \hper(\ph_0)\;.
\end{equation}
The remainder $R$ satisfies 
\begin{equation}
 \label{px03:9}
 \expec{R^2} = \frac{1}{2\lambda_+} \bigexpec{\e^{-2\lambda_+\hat\tau}}
 = \biggOrder{\frac{\sigma^2}{h_0^2}}\;,
\end{equation} 
since~\eqref{px03:7} implies that $\e^{-\lambda_+\hat\tau}h_0/\sigma$ is bounded
(see also~\cite{Kifer}). This shows that if we set $\Omega_3=\set{|R|>H}$, then 
by Markov's inequality there is a constant $\kappa_3>0$ such that 
\begin{equation}
 \label{px03:10}
 \fP(\Omega_3^c) \leqs \kappa_3 \frac{\sigma^2}{h_0^2H^2}\;.
\end{equation} 
Taking the logarithm of the absolute value of~\eqref{px03:7}, we find that on 
$\Omega_1\cap\Omega_2\cap\Omega_3$, 
\begin{equation}
 \label{px03:11}
 \log h_0 + \frac12 \log(2\lambda_+T_+\hper(\ph_{\hat\tau})) 
 = \lambda_+\hat\tau + \log\sigma + \frac12\log(T_+ \hper(\ph_0)) + \log|N|
 + \biggOrder{H+\frac{h_0^2}{\sigma}}\;.
\end{equation} 
Noting that on $\Omega_1$, $\hat\tau = T_+(\ph_{\hat\tau}-\ph_0) + \Order{h_0}$,
and recalling
the definition~\eqref{cycling05} of $\theta$, we obtain 
\begin{equation}
 \label{px03:12}
 \lim_{\sigma\to0} 
 \Law \biggl( \theta(\ph_{\hat\tau}) - \theta(\ph_0) -
 \log\biggl( \frac{h_0}{\sigma} \biggr) \biggr)
 = \Law \biggl( \Theta + \frac12\log(2\lambda_+)  \biggr) \;, 
\end{equation} 
by choosing $H=\sigma^{(1-\gamma)/2}$ and
taking the limit $\sigma\to0$. Finally, \eqref{px03:7} implies that
$\nu=\sign(r_{\hat\tau})$ converges to $\sign(N)$ as $\sigma\to0$, so that in
this limit $\prob{\nu=1}=\prob{\nu=-1}=1/2$. The result follows. 
\end{proof}

The following result shows that after time $\tau_{h_0}$, the system essentially
follows the deterministic dynamics. 

\begin{lemma}
\label{lem_exit2}
Let $h_0$ be as in the previous lemma. 
Fix an initial condition $(r_1,\ph_1)$ such that $r_1=h_0\sqrt{\hper(\ph_1)}$,
and a curve $\set{r=\delta\rho(\ph)}$, where $\rho$ is a continuous function
such that $0<\rho_0\leqs\rho(\ph)\leqs1$ for all $\ph$. 
Denote by 
\begin{itemiz}
\item $\tau_\delta$ the first-passage time at $r=\delta\rho(\ph)$ of the
solution $(r_t,\ph_t)$ starting in $(r_1,\ph_1)$, 
\item $\tau^{\det}_\delta$ the 
first-passage time at $r=\delta\rho(\ph)$ of the deterministic solution
$(r^{\det}_t,\ph^{\det}_t)$ starting in $(r_1,\ph_1)$.
\end{itemiz}
Then 
\begin{equation}
 \label{px04}
 \lim_{\sigma\to0} \ph_{\tau_\delta} = \ph^{\det}_{\tau^{\det}_\delta}
\end{equation} 
for sufficiently small $\delta$, where the convergence is in probability. 
\end{lemma}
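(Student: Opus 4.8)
The plan is a shadowing argument. Starting from the strip boundary $r_1=h_0\sqrt{\hper(\ph_1)}$, which sits at distance $h_0=\sigma^\gamma$ from the unstable orbit, I want to show that the sample path follows the deterministic flow all the way to $\set{r=\delta\rho(\ph)}$ up to an error that vanishes as $\sigma\to0$. The transit spends a time of order $\abs{\log\sigma}/\lambda_+$ in the expanding linear regime near $\set{r=0}$, so a naive Gronwall estimate over this time would be useless; what saves the argument is, on the one hand, that $h_0$ exceeds the noise amplitude $\sigma$ by the diverging factor $\sigma^{\gamma-1}$, so the noise is only a vanishing \emph{relative} perturbation of the deterministic displacement even after amplification, and, on the other hand, that by the equal-time parametrisation $f_\ph(0,\cdot)\equiv 1/T_+$, so that the phase dynamics $\dot\ph=1/T_+ +\Order{r}$ carries no exponential sensitivity in $\ph$. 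I keep the notation and the high-probability events $\Omega_1,\Omega_2,\Omega_3$ from the proof of Lemma~\ref{lem_exit1}, and fix a small constant $h_1\in(0,\delta\rho_0)$ independent of $\sigma$.

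\emph{Step 1: from level $h_0$ to level $h_1$.} While $r_t\in[0,h_1]$ we have $f_r=\lambda_+ r+\Order{r^2}$, $f_\ph=1/T_+ +\Order{r}$, and on $\Omega_1$ the phase stays $h_0$-close to $\ph_1+t/T_+$. The variation-of-constants representation~\eqref{px03:4}, adapted to the initial condition $r_1$ (so an extra term $r_1$ appears inside the bracket), reads $r_t=\e^{\lambda_+ t}[\,r_1+\sigma Z_t+Y_t+J_t\,]$, with $Z_t=\int_0^t\e^{-\lambda_+ s}g_0(s)\6W_s$ bounded in $L^2$, $Y_t$ the $g_1$-integral estimated on $\Omega_2$ exactly as in~\eqref{px03:5}--\eqref{px03:6}, and $J_t$ the $b_r$-integral, which satisfies $J_t=\Order{h_0h_1}$ since $\int_0^{\tau_{h_1}}r_s\6s=\Order{h_1}$; the deterministic solution has the same form with $Z_t$ removed. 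Choosing the Bernstein cut-off $H$ as a suitable power of $\sigma$, the fluctuation $\sigma Z_t+Y_t$ is, with probability tending to $1$, $\bigOrder{\sigma\sqrt{\abs{\log\sigma}}}$ uniformly up to the first-hitting time of level $h_1$. Dividing by $r^{\det}_t\geqs c\,h_0$ then bounds the relative error $r_t/r^{\det}_t-1$ by $\Order{h_1}$ plus a term vanishing as $\sigma\to0$, and integrating $\dot\ph=1/T_+ +\Order{r}$ yields the same bound for $\abs{\ph_t-\ph^{\det}_t}$. Since $f_r\geqs\frac{\lambda_+}{2}h_1>0$ near level $h_1$, the two first-hitting times of $h_1$ agree up to the same order, and at that moment $(r_t,\ph_t)$ lies within $\Order{h_1}$ (plus a $\sigma\to0$ vanishing term) of $(r^{\det}_t,\ph^{\det}_t)$.

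\emph{Step 2: from level $h_1$ to $\set{r=\delta\rho(\ph)}$.} On the compact set $\set{h_1/2\leqs r\leqs\delta}$ the drift is Lipschitz and bounded, and for $\delta$ small enough $f_r\geqs\frac{\lambda_+}{2}r>0$, so the deterministic flow reaches and crosses $\set{r=\delta\rho(\ph)}$ transversally in a time of order $\Order{1}$ (depending on $h_1,\delta$ but not on $\sigma$). A standard continuity estimate over this bounded time --- Gronwall for the drift, a Bernstein bound for the martingale part --- shows that, conditionally on the Step~1 event, the path remains within $\Order{\sigma}+\Order{h_1}$ of the deterministic trajectory up to its first passage at $\set{r=\delta\rho(\ph)}$, so that $\ph_{\tau_\delta}$ lies within $\Order{h_1}$ (plus a $\sigma\to0$ vanishing term) of $\ph^{\det}_{\tau^{\det}_\delta}$ on an event of probability $\to1$. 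Letting first $\sigma\to0$ and then $h_1\to0$ gives the claimed convergence in probability. The main obstacle is Step~1 --- the accumulation of noise over the logarithmically long passage through the expanding regime near $\set{r=0}$ --- and it is overcome precisely by the two structural facts $h_0/\sigma\to\infty$ and $f_\ph(0,\cdot)\equiv1/T_+$, together with the Bernstein-type martingale estimates already available from Lemma~\ref{lem_exit1}; note that the order of limits (first $\sigma\to0$, then $h_1\to0$) is essential, since the $\Order{h_1}$ errors come from the nonlinear corrections to the linear flow and are not controlled uniformly in $\sigma$ if $h_1$ is sent to zero first.
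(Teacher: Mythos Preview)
Your argument is correct and the key structural observations---that $h_0/\sigma\to\infty$ gives a vanishing \emph{relative} noise perturbation, and that $f_\ph(0,\cdot)\equiv1/T_+$ suppresses exponential sensitivity in $\ph$---are exactly the right ones. The route, however, is genuinely different from the paper's.

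The paper does not split the transit into two stages. Instead it tracks the two-dimensional difference $\zeta_t=(r_t-r^{\det}_t,\ph_t-\ph^{\det}_t)$ in one go, linearising around the deterministic orbit to get a matrix $A(t)$ whose fundamental solution satisfies $\norm{U(t,s)}\leqs M\e^{\alpha(t,s)+c\delta(t-s)}$ with $\alpha(t,s)=\int_s^t a(u)\6u$ and $a(u)=\lambda_++\Order{r^{\det}_u}$. It then introduces a \emph{$\sigma$-dependent} threshold $h_1=h_1(\sigma)$, chosen in the window $\sigma\e^{\alpha(t,0)+ct}\ll h_1\ll\e^{-\alpha(t,0)-ct}$ (which is nonempty because $\e^{\alpha(\tau^{\det}_\delta,0)}=\Order{\delta/h_0}$), and shows by a Bernstein estimate on the martingale part and a direct bound on the nonlinear remainder that $\norm{\zeta_t}\leqs h_1\e^{\alpha(t,0)}$ with probability tending to $1$. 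No intermediate level is ever fixed, and no double limit $\sigma\to0$ then $h_1\to0$ is taken.

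What each approach buys: the paper's single-step argument is shorter and the weighted norm $h_1\e^{\alpha(t,0)}$ automatically couples the $r$- and $\ph$-components, so there is no need for a separate discussion of the phase variable. Your two-stage decomposition, on the other hand, recycles the variation-of-constants representation and the events $\Omega_1,\Omega_2$ from Lemma~\ref{lem_exit1} almost verbatim, and isolates cleanly the one genuinely delicate passage (the $\Order{\abs{\log\sigma}}$ transit near the orbit) from the routine finite-time continuity estimate; the price is the extra limit $h_1\to0$ at the end. Note also that your $h_1$ is a fixed spatial level, whereas the paper's $h_1$ is a $\sigma$-dependent tolerance on $\norm{\zeta_t}$---same symbol, different role. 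One small point worth making explicit in your write-up: the reason the $\ph$-equation does not undo the gain in Step~1 over the long time $\Order{\abs{\log\sigma}}$ is that the Gronwall coefficient is $\partial_\ph f_\ph=\Order{r^{\det}_t}$, whose time integral up to $\tau_{h_1}$ is only $\Order{h_1}$, so the amplification factor stays bounded.
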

\begin{proof}
The difference $\zeta_t=(r_t-r^{\det}_t,\ph_t-\ph^{\det}_t)$ satisfies an
equation of the form 
\begin{equation}
 \label{px05:1}
 \6\zeta_t = A(t)\zeta_t\6t + \sigma g(\zeta_t,t)\6W_t 
 + b(\zeta_t,t)\6t\;,
\end{equation} 
where $A(t)$ is a matrix with top left entry $a(t)=\lambda_+ +
\Order{r^{\det}_t}$, and the other entries of order $r^{\det}_t$ or
$(r^{\det}_t)^2$, while $b$ has order $\norm{\zeta}^2$. Then 
\begin{equation}
 \label{px05:2}
 \zeta_t = \zeta^0_t + \zeta^1_t 
 = \sigma\int_0^t U(t,s) g(\zeta_s,s)\6W_s 
 + \int_0^t U(t,s) b(\zeta_s,s)\6s\;,
\end{equation} 
where $U(t,s)$ is the fundamental solution of $\dot\zeta = A(t)\zeta$. 
Denote by $\alpha(t,s)$ the integral of $a(u)$ between $s$
and $t$. One can show that there exist constants $M,c>0$ such that 
\begin{equation}
 \label{px05:3}
 \norm{U(t,s)} \leqs M \e^{\alpha(t,s)+c\delta(t-s)}
 \qquad
 \forall t>s>0\;.
\end{equation} 
We want to show that with probability going to $1$ as $\sigma\to0$, $\zeta_t$
remains of order $h_1\e^{\alpha(t,0)}$ up to time
$\tau_\delta\vee\tau^{\det}_\delta$, where $h_1\to0$ as $\sigma\to0$. Set 
\begin{equation}
 \label{px05:4}
 \tau_1 = \inf\bigsetsuch{t}{
 \norm{\zeta_t} > h_1\e^{\alpha(t,0)}}\;.
\end{equation} 
A Bernstein-type estimate shows that 
\begin{equation}
 \label{px05:5}
 \biggprob{\sup_{0\leqs s\leqs t} \e^{-\alpha(s,0)} \norm{\zeta^0_s} > h_1}
 \leqs 
 \exp \biggset{-\kappa\frac{h_1^2}{\sigma^2} \e^{-2\alpha(t,0)-2ct}}\;.
\end{equation} 
Furthermore, there exists a constant $M>0$ such that 
\begin{equation}
 \label{px05:6}
 \norm{\zeta^1_{t\wedge\tau_1}} \leqs M h_1^2 \e^{2\alpha(t,0)+2ct}\;.
\end{equation} 
Since $\e^{\alpha(\tau^{\det}_\delta,0)} = \Order{\delta/h_0}$, we can find, for
sufficiently small $\delta$, an $h_1$ such that 
\begin{equation}
 \label{px05:7}
 \sigma \e^{\alpha(t,0)+ct} \ll h_1 \ll \e^{-\alpha(t,0)-ct}
\end{equation} 
for all $t \leqs \tau_\delta\vee\tau^{\det}_\delta$. This shows that with
probability going to $1$ as $\sigma\to0$, $\norm{\zeta_t} \leqs
(h_1/2)\e^{\alpha(t,0)}$ up to
time $\tau_1\wedge(\tau_\delta\vee\tau^{\det}_\delta)$, 
and thus $\tau_1 > \tau_\delta\vee\tau^{\det}_\delta$. This proves the claim.
\end{proof}

To finish the proof of Theorem~\ref{thm_exit}, we consider the deterministic
dynamics of~\eqref{exit03}. Using $\ph$ as new time variable, the dynamics can
be written as $\6r/\6\ph = f_r(r,\ph)/f_\ph(r,\ph)$. If we set
$y=r/\sqrt{\hper(\ph)}$, it follows from~\eqref{cycling03}
and~\eqref{cycling05} that this ODE is equivalent to 
\begin{equation}
 \label{px06}
 \frac{\6y}{\6\theta} = y + \tilde b(y,\theta)\;,
\end{equation} 
where $\tilde b(y,\theta) = \Order{y^2}$. Standard perturbation
theory shows that when starting with a small initial condition $y_0=h_0$,
solutions of this equation are of the form 
\begin{equation}
 \label{px07}
 y(\theta) = h_0 \e^{\theta-\theta_0} \bigl[ 1 + \Order{h_0
\e^{\theta-\theta_0}} \bigr]\;.
\end{equation} 
Thus $y(\theta)$ reaches $\delta$ when 
$\theta-\theta_0 = \log(\delta/h_0) + \Order{\delta}$. Combining this
with~\eqref{px02} yields the result.
\qed


\subsection{Proof of Theorem~\ref{thm_duration}}
\label{ssec_proof_duration}

Fix a small value of $\delta$, and consider an orbit starting at time
$\tilde\tau_\delta$ on the curve $\set{y=\delta}$. For $u\geqs0$, let 
$\Gamma^u_+(\delta)$ be the image after time $u$ of $\set{y=\delta}$ under the
deterministic flow~\eqref{px06}. Theorem~\ref{thm_exit} and
Lemma~\ref{lem_exit2} imply that the sample path will hit $\Gamma^u_+(\delta)$ 
at a time $\ph_{\tau_+(\delta)}$ such that 
\begin{equation}
 \label{pd01}
 \theta(\ph_{\tau_+(\delta)}) -\theta(\ph_{\tau_0}) - |\log\sigma| 
 \to 
 \Theta + \frac{\log(2\lambda_+\delta^2)}{2} + u + \Order{\delta}
\end{equation} 
as $\sigma\to0$. Setting $u = s - \log(\delta)$ and taking the limit
$\delta\to0$, we obtain~\eqref{tba14b}, where 
\begin{equation}
 \label{pd02}
 \Gamma^s_+ = \lim_{\delta\to0} \Gamma^{s-\log\delta}_+(\delta)\;.
\end{equation} 
Since $y_u \simeq y_0\e^u = \delta\e^{u}$ for small $\delta\e^{u}$, this limit
is indeed well-defined. This is the equivalent of the regularization procedure
used in~\eqref{lrp05}. 

Now~\eqref{tba14a} is obtained in an analogous way, using
Proposition~\ref{prop_pG_u1}, and~\eqref{tba14c} follows from the two previous
results, using Lemma~\ref{lem_Bakhtin}. 
\qed

We note that more explicit forms of the curves $\Gamma^s_\pm$ can be provided
if we can find a change of variables $y\mapsto\hat y$ such that 
\begin{equation}
 \label{pd03}
 \frac{\6\hat y}{\6\theta} = F(\hat y) = \hat y + \hat b(\hat y)\;,
\end{equation} 
in which the right-hand side does not depend on $\theta$. Such a construction
can be performed, for instance, in the averaging regime. In these variables,
the curves $\Gamma^s_\pm$ are simply horizontal lines of constant $\hat y$, and
their dependence on $s$ can be determined by solving a one-dimensional ODE,
yielding expressions similar to~\eqref{lrp05}.

\small
\bibliography{../IHP}
\bibliographystyle{abbrv}               

\newpage
\tableofcontents

\goodbreak

\vfill

\bigskip\bigskip\noindent
{\small 
Nils Berglund \\ 
Universit\'e d'Orl\'eans, Laboratoire {\sc Mapmo} \\
{\sc CNRS, UMR 7349} \\
F\'ed\'eration Denis Poisson, FR 2964 \\
B\^atiment de Math\'ematiques, B.P. 6759\\
45067~Orl\'eans Cedex 2, France \\
{\it E-mail address: }{\tt nils.berglund@univ-orleans.fr}
}


\end{document}